\definecolor{celadon}{rgb}{0.17, 0.8, 0.69}
\definecolor{darkred}{rgb}{.7,0,0}
\definecolor{darkpink}{rgb}{0.95, 0.1, 0.8}
\DeclareMathOperator{\inn}{in}
\DeclareMathOperator{\out}{out}
\DeclareMathOperator{\lift}{lift}
\DeclareMathOperator{\proj}{proj}
\DeclareMathOperator{\pc}{pc}
\DeclareMathOperator{\RNO}{\sc{RNO}}
\DeclareMathOperator{\FNM}{\sc{FNM}}
\DeclareMathOperator{\FNMRNO}{\sc{FNM--RNO}}
\DeclareMathOperator*{\esssup}{ess\,sup}
\DeclareMathOperator*{\essinf}{ess\,inf}
\def\p{\partial}
\def\eps{\epsilon}
\def\R{\mathbb{R}}
\def\D{\mathbb{D}}
\def\eps{\epsilon}
\def\e{\mathfrak{e}}
\def\Td{\mathbb{T}^d}
\def\epsbar{\overline{\epsilon}}
\def\sigmabar{\overline{\sigma}}
\def\dx{\;\mathrm{d}x}
\def\dy{\;\mathrm{d}y}
\def\ds{\;\mathrm{d}s}
\def\dt{\;\mathrm{d}t}
\def\dz{\;\mathrm{d}z}
\def\d{\;\mathrm{d}}
\def\cT{\mathcal{T}}
\def\cZ{\mathcal{Z}}
\def\cX{\mathcal{X}}
\def\cY{\mathcal{Y}}
\def\cD{\mathcal{D}}
\newcommand{\sD}{\mathscr{D}}
\def\Zd{\mathbb{Z}^d}
\def\N{\mathbb{N}}
\def\sL{\mathscr{L}}
\def\cK{\mathcal{K}}
\def\C{\mathbb{C}}
\def\sG{\mathscr{G}}
\newcommand{\iunit}{\mathrm{i}}
\def\cC{\mathcal{C}}
\def\cM{\mathcal{M}}
\def\one{\mathds{1}}
\def\L{\mathsf{L}}
\def\T{\mathbb{T}}
\def\Z{\mathbb{Z}}
\def\cX{\mathcal{X}}
\newcommand{\BV}{\mathsf{BV}}
\newcolumntype{C}{ >{\centering\arraybackslash} m{0.1\linewidth} }
\newcolumntype{D}{ >{\centering\arraybackslash} m{0.25\linewidth} }
\newcolumntype{E}{ >{\centering\arraybackslash} m{0.05\linewidth} }
\newcolumntype{F}{ >{\centering\arraybackslash} m{0.27\linewidth} }
\newcolumntype{G}{ >{\centering\arraybackslash} m{0.55\linewidth} }
\newcolumntype{H}{ >{\centering\arraybackslash} m{0.35\linewidth} }
\newcolumntype{I}{ >{\centering\arraybackslash} m{0.4\linewidth} }
\DeclarePairedDelimiterX{\iptemp}[2]{\langle}{\rangle}{#1, #2}
\newcommand{\ip}{\iptemp}
\newcommand{\slot}{{\,\cdot\,}}
\setlist[enumerate]{leftmargin=.5in}
\setlist[itemize]{leftmargin=.5in}
\newcommand{\myitem}[1]{%
\item[#1]\protected@edef\@currentlabel{#1}%
}
\crefname{remark}{Remark}{Remarks}
\newtheorem{assumptions}{Assumptions}
\crefname{assumptions}{Assumptions}{Assumptions}
\crefname{section}{Section}{Sections}
\crefname{subsection}{Subsection}{Subsections}
\title{Learning Memory and Material Dependent Constitutive Laws\thanks{Submitted to the editors DATE.
\funding{This work is supported by the ONR MURI on Data-Driven Closure Relations N00014-23-1-2654. Additionally, GS is supported by an NSF Mathematical Sciences Postdoctoral Research Fellowship (MSPRF) under award number 2402074 and KB and AMS by the ONR SciAI Center under grant N00014-23-1-2729. AMS is also supported by a Department of Defense Vannevar Bush Faculty Fellowship.}}}
\author{Kaushik Bhattacharya\thanks{Mechanical and Civil Engineering, California Institute of Technology, Pasadena, CA, USA (\email{bhatta@caltech.edu})}
\and Lianghao Cao\thanks{Computing and Mathematical Sciences, California Institute of Technology, Pasadena, CA, USA (\email{lianghao@caltech.edu}, \email{gstepan@caltech.edu}, \email{astuart@caltech.edu}, \email{trautner@caltech.edu})}
\and George Stepaniants\footnotemark[3]\and Andrew Stuart\footnotemark[3] \and Margaret Trautner\footnotemark[3]}
\begin{document}

\maketitle

\begin{abstract}
We propose and study a neural operator framework for learning memory- and material microstructure-dependent constitutive laws for heterogeneous materials. We work in the two-scale setting where homogenization theory provides a systematic approach to deriving macroscale constitutive laws, obviating the need to resolve complex microstructure repeatedly. However, the unit cell problems defining these constitutive models are typically not amenable to explicit evaluation. It is therefore of interest to learn constitutive models from data generated by the unit cell problem. Our proposed framework models homogenized constitutive laws with both memory- and microstructure-dependence through the use of Markovian recurrent and Fourier neural operators. The homogenization problem for Kelvin–Voigt viscoelastic materials is studied to provide firm theoretical foundations for our model. The theoretical properties of the cell problem in this Kelvin–Voigt setting motivate the proposed learning framework; and are also used to prove a universal approximation theorem for the learned macroscale constitutive model. Numerical experiments show that the proposed learning framework accurately learns memory- and microstructure-dependent viscoelastic and elasto-viscoplastic constitutive models, beyond the setting of the theory. Furthermore, we show that the learned constitutive models can be successfully deployed in macroscale simulation of material deformation for different microstructures without retraining.

\end{abstract}

\begin{keywords}
Constitutive Modeling, Homogenization, Memory, Microstructure, Kelvin-Voigt, Viscoelasticity, Elasto-viscoplasticity, Neural Operator
\end{keywords}

\begin{MSCcodes}
35B27, 65M60, 68T07, 74D05, 74D10, 74Q10, 74Q15
\end{MSCcodes}

\section{Introduction}
\label{sec:I}
Physical materials have multiple scales \cite{weinan2011principles}, including the atomic scale governed by quantum interactions between atoms, the microscale encompassing fine microstructures such as grains or defects, and the macroscale which describes a material in bulk and analyzes its observable physical properties. Modeling this cascade of information across scales is a problem of immense interest in the materials science community as it holds the potential to connect structure at the atomic scale and microscale to observable material properties~\cite{phillips2001crystals}. This, in turn, can allow the prediction of macroscale phenomena by appropriately summarizing the smaller scales~\cite{zohdi2008introduction}. Multiscale materials can be simulated by constructing a hierarchy of physical models that capture the relevant phenomena at each scale and allowing these scales to interact pairwise. This simulation method is very accurate when there is clear scale separation but prohibitively expensive as it requires simulating the dynamics of an atomic scale or microscale problem within every representative volume element of the coarser scale~\cite{zohdi2008introduction}.

The theory of homogenization~\cite{abdulle2012heterogeneous,bensoussan2011asymptotic,pavliotis2008multiscale} defines macroscale constitutive laws that are found by averaging over smaller scales and, thus, provides an alternative, cheaper route for simulating the behavior of multiscale materials. In this work, we consider heterogeneous materials in the two-scale settings with a microscale $\varepsilon \ll 1$, which is often determined by the typical grain or defect size. The relationship between average stress and average strain, over a unit cell of scale $\mathcal{O}(\varepsilon)$, defines a constitutive law that governs the material at the macroscale of $\mathcal{O}(1)$. Once this macroscale constitutive law is found, we can avoid solving for the microscale dynamics entirely. 

In many settings, homogenization does not provide usable closed formulae for the relationship between cell-averaged strain and stress. In this context, it is of interest to \emph{learn} the relationship from data. For viscoelastic and inelastic materials, the stress--strain relationship often involves memory\footnote{Often referred to as history or path-dependence in the mechanics literature.} by taking strain history, a temporally varying function, as input to determine stress at a given time. 
Furthermore, it is of interest to learn how the constitutive relationship depends on the properties of the microstructure represented by a spatially varying function. Learning this dependence obviates the need to solve a different homogenization problem for each different material and has the potential to further accelerate computations. This paper aims to develop, analyze, and numerically study neural operator architectures, which take as input temporally and spatially varying functions, suitable for learning memory- and microstructure-dependent macroscale constitutive laws.

In \cref{subsec:CPO} we summarize our contributions and overview the remainder of the paper.
\cref{subsec:LR} contains a literature review, detailing the context for our contributions. In \cref{subsec:N} we define notation used throughout the paper.

\subsection{Contributions and Paper Overview}
\label{subsec:CPO}
This work makes the following novel contributions to data-driven constitutive modeling in the homogenization setting:
\begin{enumerate}[label=(C\arabic*)]
    \item  We propose a recurrent neural operator constitutive model that predicts the stress-strain relationship for a wide array of material microstructures without retraining.
    \item In the context of one-dimensional Kelvin--Voigt linear viscoelasticity, we prove Lipschitz continuity of the map from properties of the microstructure to the homogenized stress; using this we prove a universal approximation property for the proposed recurrent neural operator (\cref{thm:main_ua_theorem}).
    \item We provide insight into advantageous choices of measures from which to draw training data, namely the strain trajectories and material microstructures, when learning a model.
    \item We showcase the empirical success of our proposed constitutive model in
    both linear viscoelastic (where the theory applies directly) and nonlinear elasto-viscoplastic materials (where it does not). We (i) accurately predict stress responses when incorporating both the strain trajectory and properties of the material microstructure as inputs; and (ii) deploy the
    learned model in macroscale simulations for different material microstructures.
\end{enumerate}

In \cref{sec:PACL}, we describe our hypotheses about the exact homogenized constitutive law that
we wish to approximate, and we describe the new class of neural operators, FNM--RNO, that we introduce, study,
analyze and test in the remainder of the paper; this is contribution (C1). In \cref{sec:PB} we introduce the multiscale Kelvin--Voigt model
of one-dimensional viscoelastic materials, concentrating on describing the cell problem.
This is a constitutive model for which we are able to prove a universal
approximation theorem for FNM--RNO, motivating its form. \cref{sec:A} starts by studying the Lipschitz properties of the cell problem with respect to the material microstructure, the first part of contribution (C2). We then use this continuity to prove our universal approximation result (\cref{thm:main_ua_theorem}), showing that the cell problem partial differential equation (PDE) solution operator can be efficiently approximated by an FNM--RNO,
the second part of contribution (C2). \cref{sec:N} addresses contributions
(C3) and (C4) by describing numerical results implementing this FNM--RNO model to determine the model for
the dynamics of viscoelastic and elasto-viscoplastic homogenized materials and then simulate from this model. The supplementary material contains detailed technical discussions on the equivalence of different cell problems, Lipschitz properties of the cell problem PDE, universal approximation result, and training formulations.

\subsection{Literature Review}
\label{subsec:LR}
Our work touches upon several classical and modern topics in the constitutive modeling of materials, including homogenization methods, memory-dependent modeling, machine learning of constitutive laws, and model dependence on material microstructure. We discuss prior research in each of these areas below.

\paragraph{\bf \bf Theoretical and Numerical Homogenization} In its simplest formulation, homogenization theory~\cite{pavliotis2008multiscale} studies elliptic or parabolic PDEs whose coefficients vary periodically on a small lengthscale $\varepsilon \ll 1$. Such PDEs are \textit{multiscale} since their solutions have coarse-grained features as well as fine-grained features of scale $\varepsilon$. Homogenization uses a power series expansion to determine the limit of the PDE solution as the lengthscale $\varepsilon$ is taken to zero. This results in a new averaged or \textit{homogenized} PDE of the same form with an \textit{effective} coefficient function that is now independent of the microscale $\varepsilon$. This effective coefficient is determined by a boundary value problem (BVP) called the \textit{cell problem} which is solved at the microscale level. Homogenization theory can be extended to PDEs with random or even nonperiodic coefficients~\cite{cioranescu1999introduction, kozlov1980averaging}, and care must be taken to establish convergence of the true solution to the homogenized limit~\cite{allaire1992homogenization, cioranescu1999introduction, pavliotis2008multiscale}. Viscoelastic materials are governed by elliptic PDEs, where the coefficient function known as the \textit{elastic modulus} encodes the material microstructure. Multiscale materials can be modeled by elliptic PDEs whose elastic modulus similarly varies on a small length scale $\varepsilon$. In this setting, homogenization derives the effective elastic modulus of the material that is again independent of this microscale. The texts of Milton~\cite{milton_book} and Zohdi \& Wriggers~\cite{zohdi2008introduction} give a comprehensive review of effective material properties that result from homogenization. 

As mentioned above, the effective modulus of a homogenized material is determined by solving a cell boundary value problem at the level of the microscale $\varepsilon$. Predicting the macroscale behavior of a material by numerically solving this cell problem BVP is called \textit{numerical homogenization} and is a core focus of \textit{computational micromechanics}~\cite{zohdi2008introduction}. Cell problems are typically solved with periodic, Dirichlet, or Neumann boundary conditions using spectral~\cite{mishra2016comparative, moulinec1998numerical} or finite element methods~\cite{guedes1990preprocessing, suquet1987elements}. The cell problem resulting from homogenization allows us to compute the effective properties of a multiscale material, but this computation must be repeated for every new microstructure, making it an expensive procedure. We discuss below how machine learning methods allow us to perform homogenization over a range of microstructures simultaneously.

\paragraph{\bf Memory and Internal Variables}
Viscoelastic materials model viscous as well as elastic behavior; hence, their strain and stress dynamics explicitly depend on time. In particular, the application of a sudden strain deformation or stress load at one location is remembered throughout the material at all future times, and this memory is quantified by exponentially decaying \textit{memory kernels} called the creep compliance and relaxation modulus functions~\cite{ferry1980viscoelastic, tschoegl2012phenomenological}. This observation that viscoelastic materials have fading memory was formally shown to hold under very general mathematical assumptions in a series of papers by Coleman \& Noll~\cite{coleman1961foundations, coleman1961recent}.

Fading memory also arises in multiscale viscoelastic materials whose microstructure varies periodically on a cell of size $\varepsilon$. Even though the original multiscale material locally exhibits no memory effects in strain or stress (e.g., Markovian behavior), homogenizing by averaging at the $\varepsilon$ scale and taking $\varepsilon \to 0$ introduces local creep compliance and relaxation modulus functions that dictate long term memory in the strain and stress dynamics at every point of the homogenized material. This remarkable result was first proven by Sanchez--Palencia on the Kelvin--Voigt model using semigroup theory~\cite[Chapter 6]{sanchez1980non}. Further extensions to thermo-viscoelasticity were proven in the seminal paper of Francfort and Suquet~\cite{francfort1986homogenization}. Tartar~\cite{tartar1991memory} showed that the memory kernel (relaxation modulus) relating strain to stress after homogenization is given by a possibly infinite sum of exponentials. Suquet and coauthors~\cite{brenner2013overall, lahellec2024effective, lahellec2024effective2} discuss more about the structure of these memory kernels and their approximation by finite sums of exponentials. In one-dimensional piecewise constant materials, the creep compliance and relaxation modulus memory kernels are exactly given by a finite sum of exponentials, and this has been rediscovered in various classical and modern texts~\cite{gross1968mathematical, bhattacharya2023learning}. Approximating these kernels by a finite sum of exponentials is known as a \textit{Prony series}, and this technique has been very well explored both in theory~\cite{lahellec2024effective, serra2019viscoelastic, tschoegl2012phenomenological} and experiments~\cite{kim2024experimental, kraus2017parameter, nikonov2005determination, shanbhag2023computer}.

Viscoelastic materials whose memory kernels are given by finite sums of exponentials can be transformed into differential equations with an internal state vector whose dimension is equal to the number of exponential terms. These internal state variables integrate in their dynamics all the history of the material, but they do so in a Markovian way, leading to more efficient simulations of material stress-strain dynamics~\cite{bhattacharya2023learning, liu2023learning}. Internal variables also arise in models of plastic~\cite{rice1971inelastic} and viscoplastic~\cite{liu2023learning} materials, and reviews of this subject can be found in~\cite{billington1982physics, horstemeyer2010historical}. Hence, memory in materials is fundamentally linked to internal variable and differential equation representations. A review of the equivalence between such model representations can be found in ~\cite{eggersmann2019model}.

\paragraph{\bf Machine Learning of Constitutive Models}
Two central applications of data-driven methods in materials science are the discovery of unknown constitutive laws and, related to this, the acceleration of composite multiscale material simulations~\cite{liu2021review}. Data-driven learning of constitutive laws is an actively developing field that has incorporated a variety of tools including model-constrained parameter inference~\cite{akerson2024learning}, physics-informed machine learning~\cite{haghighat2023constitutive}, probabilistic machine learning~\cite{fuhg2022physics}, deep learning~\cite{liu2019deep}, and operator learning~\cite{bhattacharya2024learning}. We refer readers to a recent comprehensive review paper~\cite{fuhg2024review} on this topic.

For inelastic materials, a constitutive model must use the strain history to predict the evolution of stress; the relationship between strain and stress is no longer instantaneous. Learning such a mapping between strain and stress time series has been approached with several data-driven architectures. Liu et al.~\cite{liu2022learning} featurize strain and stress time series through principal component analysis and learn a mapping between these feature spaces. This approach has the benefit of being invariant to the level of time discretization of the data, but suffers from a lack of causality in its learned strain-to-stress map. Causality can be enforced through the use of recurrent neural networks (RNNs), and the LSTM~\cite{ghavamian2019accelerating} and  GRU~\cite{mozaffar2019deep} recurrent architectures have been very effective at learning strain-to-stress maps with history dependence.

A natural approach to enforce both causality and independence to time discretization is to model the constitutive law as a differential equation which is forced by the strain trajectory and whose output is the stress. Compared to the non-physical architecture of RNN models, this approach is guided by the internal variable theories of memory-dependent materials discussed above and has motivated a large application of neural ODEs~\cite{chen2018neural, jones2022neural}, also referred to as recurrent neural operators~\cite{liu2023learning}, to constitutive modeling of materials~\cite{bhattacharya2023learning, jones2022neural, karimi2024learning, liu2023learning, zhang2024iterated}.

\paragraph{\bf Microstructure-Dependent Architectures}
Since data-driven constitutive models must be retrained for each material microstructure, there is a need to develop {\it microstructure-dependent} architectures that can predict constitutive laws of new materials without retraining. One approach is to allow data-driven models to depend on summary statistics of a material, such as the volume fraction, elastic modulus of different phases, or mean sizes and distances between fibers and grains embedded in a material. This idea has been used in several important architectures such as the Deep Material Network~\cite{liu2019deep} and material-dependent recurrent networks~\cite{mozaffar2019deep}. Bishra et al.~\cite{bishara2023state} provide a good review of such methods. These approaches assume that a material microstructure can be sufficiently described by predetermined statistics, mostly applicable to $n$-phase media, and hence do not generalize to more complicated spatially varying microstructures.

The fact that a material microstructure must generally be interpreted as a full \textit{functional} input into a data-driven constitutive model has been considered in a few recent papers. In~\cite{bhattacharya2024learning}, the Fourier neural operator (FNO) architecture was trained on elastic multiscale materials to learn a map from their microstructure as a spatially-varying function on the cell domain to the solution of the cell problem. Crucially, the regularity or Lipschitz continuity of the cell problem resulting from homogenization was used to prove that this FNO architecture is a universal approximator: it can predict the homogenized elastic modulus across a range of microstructures with uniformly bounded error. In this paper, we show how cell problems of \textit{viscoelastic} materials also satisfy Lipschitz regularity conditions, allowing us to build differential equation FNO architectures with universal approximation guarantees over a range of microstructures.

Jones et al.~\cite{jones2022neural} was the first work to propose a neural ODE architecture that was microstructure dependent and hence could simulate stress-strain dynamics with internal state variables across a wide array of microstructures. Their approach was to featurize the material microstructure function using a graph convolutional neural network and to augment the initial conditions of the internal state variables with this feature vector. This way of encoding the material microstructure in the initial conditions of the internal state variables is motivated by the improved numerical performance of augmented neural ODEs~\cite{dupont2019augmented}. In contrast, the theory of viscoelastic~\cite{bhattacharya2023learning} and viscoplastic~\cite{liu2023learning} materials instead shows that material dependence must be encoded in the \textit{functional form} of the differential equation driving the internal state variables rather than in their initial conditions. This is the approach we take here, which allows us to accurately predict strain stress dynamics for a variety of microstructures and, crucially, obtain theoretical guarantees for our method.

\subsection{Notation}
\label{subsec:N}
\paragraph{\bf Sets}
 We denote the set of all integers by $\Z$. The set of $d$-dimensional integer vectors is given by $\Zd$. We denote by $\N$  the natural numbers including zero, and by $\N_{>0}$ the natural numbers excluding zero. For $M \in \N_{>0}$ we denote $[M]$ as the set of indices $\{1, \dots, M\}$.

\paragraph{\bf Euclidean Spaces}
Let $m$ be an arbitrary positive integer. Define $\R$ as the real line and $\R_+$ as the positive real line, including zero. The set of complex numbers is denoted by $\C$. Let $\R^m$ denote the $m$-dimensional Euclidean space and $\R^{m \times m}$ be the space of $m \times m$ real matrices. We also use $\R_+$ and $\R_+^m$ for the space of nonnegative real numbers and $m$ dimensional vectors with nonnegative entries respectively. We define $\D^m$ to be the space of $m \times m$ real diagonal matrices and $\D_+^m$ to be the space of real diagonal matrices with nonnegative diagonal entries. 
We write the Euclidean inner product and norm on $\R^d$ as $\langle \cdot, \cdot\rangle_2$ and $\|\cdot\|_2$ respectively. For matrices in $\R^{d \times d}$ we write the Frobenius norm as $\|\cdot\|_F$ and the infinity norm as $\|\cdot\|_\infty$.

\paragraph{\bf Function Spaces}
We define the domain $\Omega \subset \R^d$ to be a bounded open set and $\cT \subset \R_+$ to be a time interval which can be finite or infinite and open or closed.  Define the Hilbert space $L^2(\Omega; \R^m)$ whose inner product and norm we denote by $\langle \cdot, \cdot\rangle_{L^2}$ and $\|\cdot\|_{L^2}$ respectively. We also define the space $L^1(\Omega; \R^m)$ equipped with the norm $\|\cdot\|_{L^1}$. Define the space $L^\infty(\cT;\R^m)$ and denote its norm by $\|\cdot\|_{L^\infty}$. The space 
$W^{k, p}(\cT)=W^{k, p}(\cT;\R)$, or multivariate generalizations taking values in $\R^d$ or $\R^{d \times d}$, denotes the Sobolev space of functions defined on the time interval $\cT$ with weak derivatives up to order $k$ which are all in $L^p$, $1\leq p \leq \infty$. We also introduce the function space $\cZ = L^\infty(\cT; L^2(\Omega; \R^m))$ with the norm $\|u\|_\cZ = \esssup_{t \in \cT}(\|u(\cdot, t)\|_{L^2})$. We denote by $H_0^1(\Omega; \R^m)$ the closure of the space of infinitely differential functions compactly supported in $\Omega$ in the Sobolev space $W^{1,2}(\Omega)$.

When working with function spaces such as $H_0^1(\Omega; \R^m)$ or $L^1(\Omega; \R^m)$, we often write $H_0^1(\Omega)$ or $L^1(\Omega)$ when it is clear from the context that the functions take values in $\R^m$.
We denote by $\mathds{1}$ the constant unit function on $\Omega$. In much of our theory we work with the specific choice $\Omega = [0, 1]$. Additionally, we extend all the preceding definitions of functions defined
on $\Omega$ to functions defined on the $d-$dimensional torus, denoted $\Td$.

We denote the \textit{total variation} of a function $u \in L^1(\T)$ by 
\begin{equation*}\label{eq:tv_norm}
    |u|_{\BV} = \sup\Big\{\sum_{i=0}^{N-1}|u(x_{i+1}) - u(x_i)| \ \Big| \ 0 = x_0 < x_1 < \hdots < x_{N} = 1, \ N \geq 1\Big\}
\end{equation*}
and the set of functions of bounded variation on $\T$ as 
\begin{equation}
    \BV = \{u \in L^1(\T): \; |u|_{\BV} < \infty\}.
\end{equation}

\paragraph{\bf Maps}
Let $A$ (resp. $B$) be a map from input domain $\cX_A$ (resp. $\cX_B$)
into a spaces of functions defined over $\Td$ and taking values in $\R^{d_A}$ (resp. $\R^{d_B}$). The notation $(A, B)$ denotes the map from $\cX_A \times \cX_B$ into 
the function space on domain $\Td$ taking values in $\R^{d_A + d_B}$ such that, for $(x_a, x_b) \in \cX_A \times \cX_B$, $((A,B)(x_a,x_b))_j$ equals $A(x_a)_j$ for $j \in [d_A]$ and equals $B(x_b)_{j+d_A}$ for $j \in [d_B]$.

\paragraph{\bf Trajectories}
For any time-dependent function $g$ we denote by $\{g(t)\}_{t \in \cT}$ the set that includes pointwise
evaluation of $g(t)$ and its time-derivative for all $t \in \cT$. When it is clear in the appropriate context, we write $g$  as shorthand for $\{g(t)\}_{t\in\cT}$. We use $\dot{g}$ to indicate a time derivative of the trajectory $g$. In particular $\dot{\epsbar}$ is a time derivative of $\epsbar$. Note however that, in the context of elasto-viscoplasticity, we use the commonly adopted convention that $\dot{\epsilon}_{p0}$ denotes the rate constant; in particular it is not the derivative of a time-dependent function.

\paragraph{\bf Subsets of Banach Spaces}

Denote by $\cM_{f_{\min}, f_{\max}}^B$  the set of functions $f \in \BV(\Omega)$ satisfying
\begin{equation}
    |f|_{\BV} \leq B, \quad \esssup_{y \in \Omega}f(y) \leq f_{\max}, \quad \essinf_{y \in \Omega}f(y) \geq f_{\min}
\end{equation}
for some $0 < f_{\min} \leq f_{\max} < \infty$ and $B>0.$ And we denote by $\cC_{g_{\max}, \dot{g}_{\max}}$ the set of functions $g\in W^{1, \infty}(\cT)$ satisfying
\begin{equation}
    \esssup_{t \in \cT}|g(t)| \leq g_{\max}, \quad \esssup_{t \in \cT}|\dot{g}(t)| \leq \dot{g}_{\max}
\end{equation}
for some constants $0 < g_{\max}, \dot{g}_{\max} < \infty$.

\section{Proposed Approximate Constitutive Law}\label{sec:PACL}

In \cref{subsec:HCL}, we introduce the concept behind the derivation of homogenized constitutive laws, in a general setting. In \cref{subsec:neuralop0}, we propose a form of neural operator architecture to learn the homogenized constitutive law. The general setting encompasses the particular case of Kelvin--Voigt linear viscoelasticity which we use to motivate and to analyze the proposed architecture for the constitutive model, but it is not restricted to this case.

\subsection{Homogenized Constitutive Law}\label{subsec:HCL}

Let $\cD \subset \mathbb{R}^d$ denote a bounded open set and consider the following multiscale material model:
\begin{subequations}\label{PDE-ge}
\begin{align}
    \rho \p_t^2 u_{\varepsilon}(x,t) & = \nabla_x \cdot \sigma_{\varepsilon}(x,t) + f(x,t) && x \in \cD, t \in \cT \\
    \sigma_{\varepsilon}(x,t)&= \Psi(\{\nabla_x u_{\varepsilon}(x,s)\}_{s \in \cT}; M, x, \varepsilon)(t) && x \in \cD, t\in \cT\\
    u_{\varepsilon}(x,0) &= \p_tu_{\varepsilon}(x,0)=0 && x \in \cD\\
    u_{\varepsilon}(x,t) &= 0 && x \in \partial \cD, t \in \cT .
\end{align}
\end{subequations}
Here $u_{\varepsilon}$ denotes displacement and $\sigma_{\varepsilon}$ stress, and $\varepsilon$ is a small parameter defining the spatial microscale; the material properties encapsulated in the spatial fields $M$ vary on this scale. Function $\Psi$ is a multiscale constitutive model taking as input the history of the strain $\nabla_x u_{\varepsilon}$\footnote{Because we primarily work in one spatial dimension in this paper we have, for simplicity of exposition, not expressed the constitutive law in terms of the symmetrized gradient of the displacement.}. Recall, that for any time-dependent function $g$ we use the notation $\{g(s)\}_{0 \le s \le t}$ to include pointwise
evaluation of $g(s)$ and its time-derivative for all $0 \leq s \leq t$.

The objective of homogenization \cite{bensoussan2011asymptotic,blanc2023homogenization,pavliotis2008multiscale} is to remove the small parameter $\varepsilon$ and obtain homogenized constitutive law $\Psi_0$ and homogenized displacement $u$ and stress $\sigma$, related by an equation of the form
\begin{subequations}\label{PDE-e}
\begin{align}
    \rho \p_t^2 u(x,t) & = \nabla_x \cdot \sigma(x,t) + f(x,t) && x \in \cD, t \in \cT \\
    \sigma(x,t)&= \Psi_0(\{\nabla_x u(x,s)\}_{s \in \cT}; M)(t) && x \in \cD, t\in \cT\label{eq:Psi_0}\\
    u(x,0) &= \p_tu(x,0)=0 && x \in \cD\\
    u(x,t) &= 0 && x \in \partial \cD, t \in \cT.
\end{align}
\end{subequations}
When designed properly, this homogenized model delivers $(u,\sigma) \approx (u_{\varepsilon},\sigma_{\varepsilon}).$ However, as it does not involve small parameter $\varepsilon$, it offers considerable computational cost savings over \cref{PDE-ge}.

\begin{remark} \label{rem:causal}
    We note that $\Psi, \Psi_0$ are \textit{causal} functions of the strain trajectory meaning that $\Psi(\cdot)(t)$ (respectively) $\Psi_0(\cdot)(t)$) depends only on the strain\\ history$~\{\nabla_x u_{\varepsilon}(x,s)\}_{0 \leq s \leq t}$,
    (respectively$~\{\nabla_x u(x,s)\}_{0 \leq s \leq t}$) up to time $t$.
    \hspace*{\fill}$\blacklozenge$
\end{remark}

In many situations, an exact expression for $\Psi_0$ is not available. We consider a setting in which the microscale is periodic. Hence, the microstructure takes $M:\T^d \to \R^{d_M}$ where $d_M$ is the dimension of the output material properties, viewed as living in a space isomorphic to a Euclidean space, and where the explicit $x$-dependence in $\Psi$ is through $M(x/\varepsilon)$ only. This results in $\Psi_0$ not depending explicitly on $x$. We let $\sigmabar(t)$ (resp.\ $\epsbar(t)$) denote time-dependent stress (resp.\ strain) found by averaging $\sigma_{\varepsilon}$ and $\nabla_x u_{\varepsilon}$ over the
unit cell with side of length $\varepsilon$. Then we can write $\Psi_0: (\epsbar, M) \mapsto \sigmabar$; in particular, $\Psi_0$ takes as input both a time-dependent function (strain) and spatially varying functions $M$ capturing the microstructure. Our goal in this paper is to define, analyze and numerically study a methodology to determine $\Psi_0$ from numerically generated data. This data will be found by studying PDE \cref{PDE-ge} on a unit cell (one period)
and creating pairs of stress-strain histories, both averaged over the unit cell, for a variety of material properties $M$. From this, we wish to learn an approximation of $\Psi_0$.

\subsection{Neural Operator Constitutive Law}\label{subsec:neuralop0}
Here we define the neural network architecture $\Psi^{\FNMRNO}$, designed to approximate the homogenized constitutive law $\Psi_0$ in \cref{eq:Psi_0}. 
Since $\Psi_0$ does not depend explicitly on $x$, the same is true of our model
$\Psi^{\FNMRNO}$. There are natural generalizations to allow for $x$ dependence
in $\Psi^{\FNMRNO}$, to account for situations where $\Psi_0$ depends explicitly on $x$, but we do not consider these here.

\begin{definition}[FNM--RNO Architecture]\label{def:FNMRNO}
Define the mapping
\begin{equation}\label{eqn:PsiRNO}
\begin{gathered}
        \Psi^{\FNMRNO}: C^1(\cT; \R^{d \times d}) \times L^2(\Td; \R^{d_M}) \to C(\cT; \R^{d \times d}),\\
    (\{\epsbar(t)\}_{t\in \cT}, M) \mapsto \{\sigmabar(t)\}_{t\in \cT},
\end{gathered}
\end{equation}
through the equations
\begin{subequations}\label{eqn:FNMRNO}
\begin{align}
    \sigmabar(t) &= F_{\FNM}(\epsbar(t), \dot{\epsbar}(t), \xi(t); M), \quad t \in \cT,\\
    \dot{\xi}(t) &= G_{\FNM}(\epsbar(t), \xi(t); M), \quad t \in \cT,\\
    \xi(0) &= 0
\end{align}
\end{subequations}
where
\begin{equation}\label{eq:FNM_dims}
\begin{aligned}
    F_{\FNM}&: \R^{d \times d} \times \R^{d \times d} \times \R^L \times L^2(\Td; \R^{d_M}) \to \R^{d \times d}\\
    G_{\FNM}&: \R^{d \times d} \times \R^L \times L^2(\Td; \R^{d_M}) \to \R^{L}.
\end{aligned}
\end{equation}
Here $\xi \in \R^L$ denotes the internal state variable with dimension  $L \in \N_{>0}$ and $M \in L^2(\Td; \R^{d_M})$ is a vector-valued function that specifies the material microstructure on the unit cell.
\end{definition}

As defined, $\Psi^{\FNMRNO}$ is a causal function of the strain trajectory (see \cref{rem:causal}) since it is given by the simulation of a differential equation forced by the strain trajectory. The functions $F_{\FNM}, G_{\FNM}$ are chosen to be Fourier Neural Mappings (FNMs), introduced in~\cite{huang2024operator}; these are neural networks that act on function as well as vector inputs. They are generalizations of Fourier Neural Operators (FNOs)~\cite{li2020fourier}, neural networks that map functions to functions by composing pointwise linear and nonlinear operations on functions defined in the original space, and linear operations in Fourier space. We note that the FNM architecture in \cref{def:FNM} is a slight generalization of the definition given in~\cite{huang2024operator} to allow our architecture to accommodate both finite and infinite-dimensional inputs simultaneously. The architecture has the potential to learn both history dependence, through the recurrent structure in time, and material dependence, through the FNMs. We prove in \cref{lem:FNO_lip} that FNMs have bounded Lipschitz constant with respect to their vector inputs, and therefore implying by regularity of ODEs that $\sigmabar \in C(\cT; \R^{d \times d})$.

\begin{remark}\label{rem:dims}
    The following definition of an FNM incorporates a function input, a vector input, and a vector output. When dealing with multiple function or vector inputs, we assume they are concatenated to form a single input to the FNM. When dealing with matrix inputs or outputs, we assume they are flattened to vector inputs or outputs. Hence, $\epsbar, \dot{\epsbar}, \sigmabar \in \R^{d \times d}$ are flattened to become vectors in $\R^{d^2}$.

    For multiple function inputs, their evaluations at spatial points are concatenated (pointwise flattening). Hence, the input material microstructure $M$ is defined generally as a vector-valued function with a $d_M$-dimensional output. This allows us to use the FNM--RNO architecture to model a wide class of elastic and inelastic materials. For example, microstrucures of viscoelastic materials are defined by the spatial elasticity and viscosity tensor fields $E, \nu: \Td \to \R^{d \times d \times d \times d}$ which can be flattened and concatenated to define $M = (E, \nu): \Td \to \R^{d_M}$ where in this case $d_M = 2d^4$. Of course, under further symmetries such as isotropy conditions on the elasticity and viscosity tensors, they can be summarized into a material microstructure function $M$ with a much smaller dimensionality $d_M$. In the setting of elasto-viscoplastic materials (see \cref{subsec:plastic}), their microstructure is defined by four functions: the elasticity tensor (Young's modulus) $E: \Td \to \R^{d \times d \times d \times d}$, strain rate constant $\dot{\eps}_{p0}: \Td \to \R^{d \times d}$, yield stress $\sigma_Y: \Td \to \R$, and rate exponent $n: \Td \to \R$. Hence, the material microstructure can concatenate these four functions as $M = (E, \dot{\eps}_{p0}, \sigma_Y, n): \Td \to \R^{d_M}$ where in this case $d_M = d^4 + d^2 + 2$.
    \hspace*{\fill}$\blacklozenge$
\end{remark}

With these remarks in mind, we are ready to define the FNM architecture.
In the following definition, $\psi_k = e^{2\pi \iunit \ip{k}{\slot}_{\R^d}}$ are the complex Fourier basis elements of $L^2(\Td;\C).$

\scalebox{0.8}{
\begin{tikzpicture}[->, thick, every node/.style={draw, minimum size=1cm}]
    
    \node (FuncIn) at (-1.5, 0) {$M$};
    \node (VecIn) at (-1.7, -3) {$v_{\text{in}}$};
    \node (Sf) at (3.6, -1.2) {$S_f$};
    \node (Sv) at (0.8, -3) {$S_v$};
    \node (D) at (2.6, -3) {$\sD$};
    \node (L1) at (5.5, -1.2) {$\sL_1$};
    \node (LT) at (7, -1.2) {$\sL_T$};
    \node (G) at (8, -3) {$\sG$};
    \node (Qv) at (10, -3) {$Q_v$};
    \node (VecOut) at (12, -3) {$v_{\text{out}}$};

    \node[above=of FuncIn, yshift=-1cm, draw=none, align=center, font=\bfseries] (InputLabel) {Function\\Input};

    \node[above=of VecIn, yshift=-1cm, draw=none, align=center, font=\bfseries] (InputLabel) {Vector\\Input};
    
    \node[above=of Sf, yshift=-1cm, draw=none, align=center, font=\bfseries] (LiftingLabel) {Function\\Lifting};

    \node[above=of Sv, yshift=-1cm, draw=none, align=center, font=\bfseries] (LiftingLabel) {Vector\\Lifting};

    \node[below=of D, yshift=1cm, draw=none, align=center, font=\bfseries] (DecoderLabel) {Vector to\\Function};

    \node[above=of L1, yshift=-1cm, xshift=0.7cm, draw=none,
    font=\bfseries] (FourierLabel) {Fourier Layers};

    \node[below=of G, yshift=1cm, draw=none, align=center, font=\bfseries] (FunctionalLabel) {Function to\\Vector};

    \node[above=of Qv, yshift=-1cm, draw=none, align=center, font=\bfseries] (ProjectionLabel) {Vector\\Projection};

    \node[above=of VecOut, yshift=-1cm, draw=none, align=center, font=\bfseries] (OutputLabel) {Vector\\Output};

    \draw (FuncIn) -- (Sf);
    \draw (VecIn) -- (Sv);
    \draw (Sv) -- (D);
    \draw (D) -- (Sf);
    \draw (Sf) -- (L1);
    \draw[dashed] (L1) -- (LT);
    \draw (LT) -- (G);
    \draw (G) -- (Qv);
    \draw (Qv) -- (VecOut);
\end{tikzpicture}
}

\begin{definition}[Fourier Neural Mapping (FNM)] \label{def:FNM}
    Let the function input $M \in L^2(\Td; \R^{d_M})$. Define the vector input $v_{\inn} \in \R^{d_{\inn}^v}$ and vector output $v_{\out} \in \R^{d_{\out}^v}$. Let $x\in \Td$. Now we define the following layers: 
    \begin{itemize}[topsep=1.67ex,itemsep=0.5ex,partopsep=1ex,parsep=1ex,leftmargin=25ex]
        \myitem{(Vector Lifting)} $S_v: \R^{d_{\inn}^v} \to \R^{d_{\lift}^v}$
        
        \myitem{(Vector to Function)} $\sD: \R^{d_{\lift}^v} \to L^2(\Td; \R^{d_{\lift}^{vf}})$ \newline 
        $z \mapsto \sD z = \kappa_v(\cdot)z$ \newline
        $z \mapsto \sD z = \left\{\sum_{k \in \Zd} \left(P_v^{(k)}z\right)_j \psi_k\right\}_{j \in [d_{\lift}^{vf}]}$

        \myitem{(Function Lifting)} $S_f: L^2(\Td; \R^{d_M + d_{\lift}^{vf}}) \to L^2(\Td; \R^{d_0})$

        \myitem{(Fourier)} $\sL_t: L^2(\Td; \R^{d_{t-1}}) \to L^2(\Td;\R^{d_t})$, $t \in [T]$, \newline $\bigl(\sL_t(u)\bigr)(x) = \sigma\bigl(W_t u(x) + (\cK_t u)(x) + b_t\bigr)$, 

        \myitem{(Function to Vector)} $\sG: L^2(\Td;\R^{d_T}) \to \R^{d_{\proj}^{fv}}$ \newline
        $h \mapsto \sG h = \int_{\Td} \kappa_f(x) h(x) \dx$ \newline
        $h \mapsto \sG h = \left\{\sum_{k \in \Zd}\left(\sum_{j=1}^{d_T} (P^{(k)}_f)_{\ell j} \ip{\psi_k}{h_j}_{L^2(\Td;\C)}\right)\right\}_{\ell \in [d_{\proj}^{fv}]}$

        \myitem{(Vector Projection)} $Q_v: \R^{d_{\proj}^{fv}} \to \R^{d_{\out}^v}.$
    \end{itemize}
The convolution operator is given, for $u: \Td \to\R^{d_{t-1}}$ and $x\in\Td$, by
\begin{equation}\label{eqn:fno_Kt}
    (\cK_tu)(x) = \left\{\sum_{k \in \Zd}\left(\sum_{j=1}^{d_{t-1}} (P^{(k)}_t)_{\ell j}\ip{\psi_k}{u_j}_{L^2(\Td;\C)} \right)\psi_k(x)\right\}_{\ell\in[d_{{t}}]} \in\R^{d_{t}}\,.
\end{equation}
For given layer index $t$
and wave vector $k \in \Zd$, the matrix $P^{(k)}_t \in \mathbb{C}^{d_{{t}} \times d_{t-1}}$ comprises learnable parameters of the integral operator $\cK_t$; furthermore, $W_t \in \R^{d_{{t}}\times d_{t-1}}$ is a weights matrix, $b_t\in \R^{d_{t}}$ is a bias vector, both learnable. And, for given wave vector $k \in \Zd$, $P_v^{(k)} \in \C^{d_{\lift}^{vf} \times d_{\lift}^v}$ are the learnable parameters of the vector to function map $\sD$, and $P_f^{(k)} \in \C^{d_{\proj}^{fv}\times d_T}$ are the learnable parameters of the function to vector map $\sG$. The vector lifting and
projection layers, $S_v$ and $Q_v$, are either shallow neural networks or linear maps, and hence also contain learnable parameters. Finally the function lifting layer $S_f$ is
applied pointwise in $\Td-$a.e. and is also defined by either a shallow neural network or
a linear map containing learnable parameters.
\end{definition}

\begin{remark}
Note that the function $\kappa_f: \Td \to \R^{d_{\proj}^{fv} \times d_T}$ is parametrized in the Fourier domain, where the coefficients $P_f^{(k)}$ correspond to the Fourier coefficients of $\kappa_f$. Similarly, the function $\kappa_v: \Td \to \R^{d_{\lift}^{vf} \times d_{\lift}^v}$ for the vector to function layer $\sD$ is parameterized in the Fourier domain such that $P^{(k)}_v$ correspond to the Fourier coefficients of $\kappa_v$.
\hspace*{\fill}$\blacklozenge$
\end{remark}

Using the definition of the Fourier Neural Mapping above and comparing to~\cref{eq:FNM_dims}, we know the input and output dimensionalities for $F_{\FNM}$ are $d_{\inn}^v = 2d^2 + L, d_{\out}^v = d^2$ and for $G_{\FNM}$ are $d_{\inn}^v = d^2 + L, d_{\out}^v = d^2$. The input dimensionality $d_M$ of the material microstructure $M$ depends on the material model as discussed in \cref{rem:dims}.

\section{Kelvin--Voigt Viscoelasticity}\label{sec:PB}
We now introduce the classical Kelvin--Voigt (KV) model for a multiscale visocelastic material. The structure of the FNM--RNO architecture $\Psi^{\FNMRNO}$ introduced in the previous section is motivated by the homogenized form $\Psi_0$ of the Kelvin--Voigt model described below. In fact, we will later prove that this neural architecture approximates the homogenized constitutive law of Kelvin--Voigt viscoelasticity to arbitrary accuracy.

We begin in \cref{subsec:KV_homog} by introducing the multiscale Kelvin--Voigt model in one dimension and describe the structure of its average strain-to-stress map $\Psi_0$ resulting from homogenization. We then show in \cref{subsec:mat_dep}, that for piecewise-constant microstructures, the map $\Psi_0$ has an explicit analytical form, with memory captured through a differential equation forced by the strain, whose parameters depend continuously on the material microstructure pieces.

\subsection{Homogenization and Cell Problem}\label{subsec:KV_homog}
Let $E, \nu: \T \to \R$ and $E_{\varepsilon}(x) = E(\frac{x}{\varepsilon})$ and $\nu_{\varepsilon}(x) = \nu(\frac{x}{\varepsilon})$ where $\varepsilon \ll 1$ denotes a small spatial lengthscale. One-dimensional, multiscale Kelvin--Voigt viscoelasticity is governed by the following partial differential equation on a spatial domain $\cD = [0,D]$ and time interval $\cT = [0,T]:$
\begin{subequations}\label{PDE-eps}
\begin{align}
    \rho \p_t^2 u_{\varepsilon}(x,t) & = \p_x \sigma_{\varepsilon}(x,t) + f(x,t) && x \in \cD, t \in \cT \\
    \sigma_{\varepsilon}(x,t)&= E_{\varepsilon}(x)\p_xu_{\varepsilon}(x,t) + \nu_{\varepsilon}(x)\p^2_{xt}u_{\varepsilon}(x,t) && x \in \cD, t\in \cT\\
    u_{\varepsilon}(x,0) &= \p_tu_{\varepsilon}(x,0)=0 && x \in \cD\\
    u_{\varepsilon}(0,t) &= u_{\varepsilon}(D, t) = 0 && t \in \cT.
\end{align}
\end{subequations}
Thus the material properties $E_{\varepsilon}, \nu_{\varepsilon}$ depend only on the microscale variable $y=\frac{x}{\varepsilon}$ and have no dependence on the macroscale variable $x$ independent of $y$. We note that the external forcing $f$ is assumed independent of $\varepsilon$. These assumptions can be relaxed but doing so leads to greater computational complexity when learning homogenized models.

Equations \cref{PDE-eps} are a specific instance of the general setting of \cref{PDE-ge}. In this specific setting the homogenization procedure is outlined in \cite[section 2.2]{bhattacharya2023learning}, a one-dimensionalization of the general case of homogenization for Kelvin--Voigt viscoelasticity developed in \cite{francfort1986homogenization}. The homogenized operator
\begin{equation}\label{eqn:Psi0}
\begin{gathered}
        \Psi_0: C^1(\cT; \R) \times L^2(\T; \R^2) \to C(\cT; \R),\\
    (\{\epsbar(t)\}_{t\in \cT}, E, \nu) \mapsto \{\sigmabar(t)\}_{t\in \cT},
\end{gathered}
\end{equation}
mapping strain to stress is given by the solution of the \textit{cell problem}
\begin{subequations}\label{eq:cell_problem}
    \begin{align}
        &\sigmabar(t) = \smallint_\Omega \sigma(y, t) \dy, &\quad t&\in \cT,\\
        &-\p_y\sigma(y, t) = 0, &\quad (y,t) &\in \Omega \times \cT, \label{eq:balance_of_force}\\
         &\sigma(y, t) = E(y)\p_y u(y, t) +\nu(y)\partial_{yt}u(y, t), &\quad (y,t) &\in \Omega \times \cT, \label{subeq:map}\\
        &u(0, t) = 0, \; u(1, t) = \epsbar(t), &\; t &\in \cT, \\
        &u(y, 0) = 0, &\; y&\in \Omega,
    \end{align}
\end{subequations}
where $\Omega = [0, 1]$ and $\cT = [0, T]$ and the boundary condition $\epsbar(t)$ satisfies $\epsbar(0) = 0$. This version of the equations is derived in \cite[Lemma 3.12]{bhattacharya2023learning}. The boundary condition $\epsbar$ is suggestively written since the spatially averaged strain $\epsbar(t) = \int_{\Omega} \p_y u(y,t) \dy$ is exactly the boundary condition. In one dimension, $\sigma(y,t) = \sigma(t)$ is not spatially dependent due to the balance of forces in \cref{eq:balance_of_force}. Thus, the spatially averaged stress is $\sigmabar = \sigma$. We remark that the cell problem is not well-defined for all microstructures $(E, \nu) \in L^2(\T; \R^2)$ and hence in our theoretical analysis of this PDE, we constrain the microstructure to be positive, bounded from above and below, and of bounded variation (see \cref{assump:E_nu_eps} below).

A useful procedure to analyze this system is to decompose the solution of our cell problem into a heterogeneous periodic component and a homogeneous nonperiodic component as
\begin{equation}\label{eq:periodic_decomp}
    u(y, t) = p(y, t) + \epsbar(t)y,
\end{equation}
where $p(y, t)$ satisfies the \textit{detrended} cell problem
\begin{subequations}\label{eq:detrended_cell_problem}
    \begin{align}
         \p_y\Big(\nu\p_{yt}p + E\p_y p\Big) &= -\dot{\epsbar}\p_y\nu -
         \epsbar(t)\p_yE(y), &(y,t) &\in \Omega \times \cT, \label{subeq:map}\\
        p(0, t) &= p(1, t) = 0, &t &\in \cT, \\
        p(y, 0) &= 0, &y&\in \Omega.
    \end{align}
\end{subequations}
Note that we refer to $p(y, t)$ as the periodic component of the solution, but more precisely it is the solution to the Dirichlet detrended cell problem above with homogeneous boundary conditions. Our analysis in \cref{subsec:C} and \cref{sec:Lip} proves Lipschitz regularity of the cell problem in \cref{eq:cell_problem} and these arguments rely on integration by parts formulae which are easier to express in terms of $p(y, t)$.

We define a norm under which we can study the magnitude of solutions $u, p$ to the original and detrended cell problems above. Following the notation in~\cite{bhattacharya2023learning}, we define the $\xi$-dependent quadratic form
\begin{equation}
    q_\xi(u, v) := \int_\Omega \xi(y)\p_yu(y)\p_yv(y)\dy
\end{equation}
for arbitrary $\xi \in L^\infty(\Omega; (0, \infty))$. Define the bounds
\begin{equation}\label{eq:xi_bounds}
    \xi_{\max} := \esssup_{x \in \Omega}\xi(x) < \infty, \quad \xi_{\min} := \essinf_{x \in \Omega}\xi(x) > 0.
\end{equation}
Note that $q_\xi(\cdot, \cdot)$ defines an inner product with resulting norm
\begin{equation}\label{eq:H01_weighted}
    \|u\|_{H_0^1, \xi}^2 := q_\xi(u, u).
\end{equation}
In the case that $\xi = \mathds{1}(\cdot)$ is the constant unit function, we write
\begin{equation}
    \|u\|_{H_0^1}^2 := q_\mathds{1}(u, u).
\end{equation}
The norms $\|\cdot\|_{H_0^1, \xi}$ are equivalent for all $\xi \in L^\infty(\Omega; (0, \infty))$ as shown in the following:
\begin{lemma}[Lemma 1.1 in~\cite{bhattacharya2023learning}]\label{lem:norm_equiv}
    For any $\xi, \zeta \in L^\infty(\Omega; (0, \infty))$ satisfying properties~\cref{eq:xi_bounds}, the norms $\|\cdot\|_{H_0^1, \xi}$ and $\|\cdot\|_{H_0^1, \zeta}$ are equivalent in the sense that
    \begin{equation}
        \frac{\zeta_{\min}}{\xi_{\max}}\|u\|_{H_0^1, \xi}^2 \leq \|u\|_{H_0^1, \zeta}^2 \leq \frac{\zeta_{\max}}{\xi_{\min}}\|u\|_{H_0^1, \xi}^2.
    \end{equation}
\end{lemma}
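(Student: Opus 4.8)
The plan is to reduce the claimed two-sided inequality to an elementary pointwise comparison of the two weight functions. First I would expand the definition \eqref{eq:H01_weighted}: for any $u \in H_0^1(\Omega)$,
\begin{equation*}
    \|u\|_{H_0^1, \zeta}^2 = \int_\Omega \zeta(y)\bigl(\p_y u(y)\bigr)^2 \dy = \int_\Omega \frac{\zeta(y)}{\xi(y)}\,\xi(y)\bigl(\p_y u(y)\bigr)^2 \dy,
\end{equation*}
which is legitimate because $\xi(y) \geq \xi_{\min} > 0$ for a.e.\ $y$, and all integrals in sight are finite since $\p_y u \in L^2(\Omega)$ while $\xi, \zeta \in L^\infty(\Omega)$.

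The key step is an a.e.\ pointwise bound on the ratio. Applying \eqref{eq:xi_bounds} to both $\xi$ and $\zeta$ gives $0 < \xi_{\min} \le \xi(y) \le \xi_{\max}$ and $0 < \zeta_{\min} \le \zeta(y) \le \zeta_{\max}$ for a.e.\ $y \in \Omega$, whence
\begin{equation*}
    \frac{\zeta_{\min}}{\xi_{\max}} \;\leq\; \frac{\zeta(y)}{\xi(y)} \;\leq\; \frac{\zeta_{\max}}{\xi_{\min}} \qquad \text{for a.e. } y \in \Omega.
\end{equation*}
Since the integrand $\xi(y)\bigl(\p_y u(y)\bigr)^2$ is nonnegative, multiplying these inequalities through by it and integrating over $\Omega$ preserves the ordering by monotonicity of the Lebesgue integral; recognizing $\int_\Omega \xi(y)(\p_y u(y))^2\,\dy = \|u\|_{H_0^1,\xi}^2$ then yields exactly the two bounds in the statement. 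Taking $\zeta = \mathds{1}$ recovers the comparison between $\|\cdot\|_{H_0^1}$ and $\|\cdot\|_{H_0^1,\xi}$ as a special case.

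There is essentially no hard part here; the only point worth a remark is that $q_\xi(\cdot,\cdot)$ really is an inner product on $H_0^1(\Omega)$ — symmetry and bilinearity are immediate, while positive definiteness follows from $\|u\|_{H_0^1,\xi}^2 \ge \xi_{\min}\|\p_y u\|_{L^2}^2$, which vanishes only if $\p_y u = 0$ a.e., forcing $u = 0$ by the homogeneous boundary conditions (Poincar\'e). This is already presumed in the statement, so the proof itself is just the short chain of displays above; alternatively one may simply cite Lemma 1.1 of~\cite{bhattacharya2023learning}.
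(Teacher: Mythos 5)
Your argument is correct and is the canonical pointwise-comparison proof of equivalence for weighted $H_0^1$ norms; the paper itself does not reproduce a proof but simply cites Lemma 1.1 of~\cite{bhattacharya2023learning}, and your derivation is exactly what one expects that reference to contain. The only stylistic note is that the intermediate rewriting $\zeta = (\zeta/\xi)\xi$ is not really needed — one can bound $\zeta(y) \le (\zeta_{\max}/\xi_{\min})\,\xi(y)$ and $\zeta(y) \ge (\zeta_{\min}/\xi_{\max})\,\xi(y)$ a.e.\ directly and integrate against $(\p_y u)^2$ — but this is purely cosmetic and your version is equally valid.
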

Hence, we can use any inner product $q_\xi(u, v)$ for $\xi$ satisfying~\cref{eq:xi_bounds} since they are all equivalent. 

We can write the \textit{weak form} of our cell problem in~\cref{eq:cell_problem},
seeking solution $u \in C^1\bigl(\cT; H_0^1(\Omega; \R)\bigr)$
satisfying
\begin{subequations}
\label{eq:wf}
\begin{align}
    q_\nu(\p_tu, \varphi) + q_E(u, \varphi) &= 0, \quad \forall \varphi \in H_0^1(\Omega; \R), t \in \cT,\\
    u & =0, \quad t=0.
\end{align}
\end{subequations}
 Finally, we note that the solution to the cell problem $u$ can be interpreted as a function of time that maps into $L^2(\Omega; \R)$ so it lives in $\cZ = L^\infty(\cT; L^2(\Omega; \R))$ equipped with the norm $\|u\|_\cZ = \esssup_{t \in \cT}(\|u(\cdot, t)\|_{L^2})$. Recall the notation for the sets $\cM^B_{\cdot, \cdot}$ and $\cC_{\cdot, \cdot}$ from \cref{subsec:N}.

\subsection{Material Dependence}\label{subsec:mat_dep}
The goal of this and the next section is to study the properties of,
and approximate, the homogenized map $(\epsbar, E, \nu)\mapsto \sigmabar$, where $\epsbar$ and $\sigmabar$ are shorthand for $\{\epsbar(t)\}_{t\in\cT}$ and $\{\sigmabar(t)\}_{t\in \cT}$ respectively, defined by \cref{eq:cell_problem}. This map allows us to study how the average stress $\sigmabar$ depends on the material properties $E, \nu$ and average strain boundary condition $\epsbar$. We  use the assumptions:
\begin{assumptions}\label{assump:E_nu_eps}
    We make the following assumptions on $E, \nu,$ and $\epsbar$ throughout:
    \begin{enumerate}
        \item  Assume that for the constants $0 < E_{\min} \leq E_{\max} < \infty$, $0 < \nu_{\min} \leq \nu_{\max} < \infty$ and $B>0$ we have that $E \in \cM_{E_{\min}, E_{\max}}^B$ and $\nu \in \cM_{\nu_{\min}, \nu_{\max}}^B$.
        \item  Assume that for the constants $0 < \epsbar_{\max}, \dot{\epsbar}_{\max} < \infty$ we have that $\epsbar \in \cC_{\epsbar_{\max}, \dot{\epsbar}_{\max}}.$ 
    \end{enumerate}
\end{assumptions}

\begin{remark} 
By definition of the sets $\cM^B_{\cdot, \cdot}$ and $\cC_{\cdot, \cdot}$ from \cref{subsec:N} we see that the bounds implied by the preceding assumptions hold using the $\esssup$, $\essinf$ over the cell problem domain $y \in \Omega$ or the time domain $t \in \cT$. In the remainder of the paper we will simply write $\sup$ and $\inf$ for notational brevity, but essential suprema and infima are implied.
\hspace*{\fill}$\blacklozenge$
\end{remark}

As proved in the seminal paper of Francfort and Suquet~\cite{francfort1986homogenization}, the map $\Psi_0$ in \cref{eq:cell_problem} takes the form
\begin{equation}\label{eqn:kernelform}
\sigmabar(t) = \Psi_0(\{\epsbar(t)\}_{t\in \cT}; E, \nu) := E'\epsbar(t) + \nu' \dot{\epsbar}(t) - \int_0^t K(t-s) \epsbar(s) \ds.
\end{equation}

For the 1D cell problem~\cref{eq:cell_problem}, the parameters $E', \nu'$ (which we refer to as the \emph{Markovian parameters} in what follows as they do not involve memory) are derived in \cite[Appendix B.1]{bhattacharya2023learning} and shown to take the following form:
\begin{equation}\label{eq:markov_params}
    E' = \frac{\int_0^1\frac{E(y)}{\nu(y)^2}\dy}{\Big(\int_0^1\frac{1}{\nu(y)}\dy\Big)^2}, \quad \nu' = \frac{1}{\int_0^1\frac{1}{\nu(y)}\dy}.
\end{equation}
The memory kernel is given in the Laplace domain as
\begin{equation}
    \mathcal{L}[K](s) = E' + \nu's - \Big(\int_0^1\frac{\dy}{E(y)+\nu(y)s}\Big)^{-1},
\end{equation}
where $\mathcal{L}[K]: \R \to \R$ is the Laplace transform of $K: \R \to \R$. Using detailed complex analysis techniques, the inverse Laplace transform of this formula can be taken, to derive a closed-form expression for $K(t) = \int_\R e^{-\alpha t} \d\mu(\alpha)$ given by a continuum integral over exponential decays. These exponents are weighted by a measure $\mu$ that depends explicitly on the $E, \nu$ microstructure parameters~\cite{darrow2025spectral}.

When the microstructure parameters are piecewise-constant functions, the memory kernel $K(t)$ becomes a finite sum of exponentials, as shown by the following result:
\begin{proposition}[Theorem 3.6 in~\cite{bhattacharya2023learning}]\label{thm:pc_restate}
    Assume $E, \nu$ are piecewise-constant materials with $L$ pieces of positive lengths $\{d_{\ell}\}_{\ell \in [L]}$ where
    \begin{equation}
        E(y) = E_i, \ \nu(y) = \nu_i, \quad y \in \Big[\sum_{l=1}^{i-1}d_l, \sum_{l=1}^id_l\Big)
    \end{equation}
    with $i \in [L]$, and where the piece lengths add up to $\sum_{i=1}^L d_i = 1$. Then the map from $\{\epsbar(t)\}_{t\in \cT}$ to $\{\sigmabar(t)\}_{t\in\cT}$ is given by the integro-differential Volterra equation
    \begin{equation}
        \sigmabar(t) = E_{\pc}'\epsbar(t) + \nu_{\pc}' \dot{\epsbar}(t) - \int_0^t K_{\pc}(t-s) \epsbar(s) \ds
    \end{equation}
    where the memory kernel is given by
    \begin{equation}\label{eq:volterra_constitutive} 
        K_{\pc}(t) = \sum_{l=1}^{L-1} \beta_le^{-\alpha_l t}.
    \end{equation}
    The Markovian parameters~\eqref{eq:markov_params} are given by
    \begin{equation}\label{eqn:PC-E-nu}
        E'_{\pc} = L\frac{\sum_{i=1}^Ld_i\frac{E_i}{\nu_i^2}}{\Big(\sum_{i=1}^L\frac{d_i}{\nu_i}\Big)^2}, \quad \nu_{\pc}' = \frac{1}{\sum_{i=1}^L\frac{d_i}{\nu_i}}.
    \end{equation}
    Defining the two polynomials
    \begin{equation}
        P(s) = \prod_{i=1}^d(E_i - \nu_i s), \quad Q(s) = \sum_{i=1}^Ld_i\prod_{j \neq i}(E_j - \nu_j s).
    \end{equation}
    the exponential decays $\{\alpha_l\}_{l=1}^{L-1}$ of the memory kernel are defined as the roots of $Q(s)$ and the exponent coefficients $\{\beta_l\}_{l=1}^{L-1}$ are given by the residues around the poles of the rational function $P(s)/Q(s)$ which are all positive valued.
    
    Finally, the Volterra equation~\cref{eq:volterra_constitutive} relating strain-to-stress is equivalent to the differential equation model
    \begin{equation}\label{eq:diffeq_constitutive}
    \begin{aligned}
        \sigmabar(t) &= E'_{\pc}\epsbar(t) + \nu'_{\pc}\dot{\epsbar}(t) + \langle \mathds{1}_{L-1}, \xi(t) \rangle, && t \in \cT\\
        \dot{\xi}(t) & = -A\xi(t) + b\epsbar(t), && t \in \cT\\
        \xi(0) & = 0
    \end{aligned}
    \end{equation}
    where the matrix $A$ is diagonal with positive entries $\{\alpha_l\}_{l=1}^{L-1}$ and the coefficient vector $b = \{\beta_l\}_{l=1}^{L-1} \in \R_+^{L-1}$.
\end{proposition}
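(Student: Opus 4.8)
The plan is to pass to the Laplace domain, where the formula for $\mathcal{L}[K]$ displayed just above the statement becomes, for piecewise-constant $E,\nu$, a rational function of $s$; its partial-fraction expansion then produces the Prony series \eqref{eq:volterra_constitutive}, and the resulting explicit algebraic formulas make the dependence on the material pieces manifestly continuous. First I would substitute the piecewise-constant profiles into the integral, $\int_0^1 \frac{\dy}{E(y)+\nu(y)s}=\sum_{i=1}^{L}\frac{d_i}{E_i+\nu_i s}$, and put this over a common denominator, $\sum_{i=1}^L\frac{d_i}{E_i+\nu_i s}=\widetilde Q(s)/\widetilde P(s)$, where $\widetilde P(s)=\prod_{i=1}^L(E_i+\nu_i s)$ has degree $L$ and $\widetilde Q(s)=\sum_{i=1}^L d_i\prod_{j\neq i}(E_j+\nu_j s)$ has degree $L-1$ with positive leading coefficient; these are the polynomials $P,Q$ of the statement after the substitution $s\mapsto-s$. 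Hence $\mathcal{L}[K](s)=E'+\nu's-\widetilde P(s)/\widetilde Q(s)$. Requiring that the $O(s)$ and $O(1)$ parts of $\widetilde P/\widetilde Q$ at $s=\infty$ be cancelled forces the two leading coefficients of the numerator of $\mathcal{L}[K]$ to vanish, and solving these two scalar equations yields exactly the stated $\nu'_{\pc}$ (the ratio of the leading coefficients of $\widetilde P$ and $\widetilde Q$) and $E'_{\pc}$; after this cancellation $\mathcal{L}[K]=N(s)/\widetilde Q(s)$ with $\deg N\le L-2$, a strictly proper rational function.

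The second and main step is to control the poles. Set $g(s):=\widetilde Q(s)/\widetilde P(s)=\sum_{i=1}^L\frac{d_i}{E_i+\nu_i s}$; its poles are the simple negative reals $-E_i/\nu_i$, and $g'(s)=-\sum_{i=1}^L\frac{d_i\nu_i}{(E_i+\nu_i s)^2}<0$ wherever $g$ is finite. Assuming (the generic case) that the ratios $E_i/\nu_i$ are distinct, $g$ therefore decreases strictly from $+\infty$ to $-\infty$ on each of the $L-1$ open intervals between consecutive poles, so it has exactly one zero in each; since $\deg\widetilde Q=L-1$ these account for all zeros of $\widetilde Q$, which are thus real, simple, negative, and interlaced with $\{-E_i/\nu_i\}$. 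Writing them $-\alpha_l$ yields $\alpha_l>0$, i.e. the roots of $Q$ are the positive reals $\{\alpha_l\}$. (When some ratios coincide the repeated factors cancel between $\widetilde P$ and $\widetilde Q$, leaving strictly fewer exponential modes; I would record this caveat and otherwise proceed generically.) A partial-fraction expansion of the strictly proper $\mathcal{L}[K]$ gives $\mathcal{L}[K](s)=\sum_{l=1}^{L-1}\frac{\beta_l}{s+\alpha_l}$ with $\beta_l=\mathrm{Res}_{s=-\alpha_l}\mathcal{L}[K](s)=-\mathrm{Res}_{s=-\alpha_l}\bigl(\widetilde P/\widetilde Q\bigr)=-1/g'(-\alpha_l)$, which is positive because $g'(-\alpha_l)<0$; equivalently these are the (positive) residues of $P/Q$ at its poles $\alpha_l$. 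Inverting the Laplace transform term by term gives $K_{\pc}(t)=\sum_{l=1}^{L-1}\beta_l e^{-\alpha_l t}$, which decays exponentially since all $\alpha_l>0$; combined with \eqref{eqn:kernelform} this is the claimed Volterra equation.

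For the equivalence with the internal-variable system I would introduce the convolutions $\xi_l(t):=-\beta_l\int_0^t e^{-\alpha_l(t-s)}\epsbar(s)\,\ds$, chosen so that $\langle\mathds{1}_{L-1},\xi(t)\rangle=-\int_0^t K_{\pc}(t-s)\epsbar(s)\,\ds$, differentiate under the integral (Leibniz rule, using $\epsbar(0)=0$ only insofar as it is assumed) to obtain $\dot\xi_l(t)=-\alpha_l\xi_l(t)-\beta_l\epsbar(t)$ with $\xi_l(0)=0$, and conversely recover the convolution from this linear ODE by Duhamel's formula; assembling the diagonal matrix $A=\mathrm{diag}(\alpha_1,\dots,\alpha_{L-1})$ and the vector built from $\{\beta_l\}$ and substituting into \eqref{eqn:kernelform} reproduces \eqref{eq:diffeq_constitutive}, up to the overall sign convention for $\xi$ used in \cite{bhattacharya2023learning}.

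I expect the positivity and reality statements for $\{\alpha_l\}$ and $\{\beta_l\}$ to be the crux: making the pole--zero interlacing argument airtight, correctly handling coincident material ratios so that the count ``$L-1$'' is understood generically, and matching the sign conventions in $P,Q$ and in the definition of the internal variables to those of \cite{bhattacharya2023learning}. By contrast, the rational-function bookkeeping that pins down $E'_{\pc}$ and $\nu'_{\pc}$, and the Leibniz/Duhamel argument for the ODE equivalence, are routine once the strict properness of $\mathcal{L}[K]$ has been identified.
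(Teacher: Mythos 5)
The paper does not prove this proposition; it is stated as a direct restatement of Theorem~3.6 in the cited reference, and the surrounding text supplies only the Laplace-domain formula for $\mathcal{L}[K]$ and the Markovian parameters. Your argument therefore fills a gap the paper deliberately defers. The route you take --- specialize $\int_0^1 \dy/(E+\nu s)$ to $\sum_i d_i/(E_i+\nu_i s)$, clear denominators, identify $\nu'_{\pc}$ and $E'_{\pc}$ by requiring $\mathcal{L}[K]$ to be strictly proper, locate the poles $\{-\alpha_l\}$ by the strict monotonicity of $g(s)=\widetilde Q/\widetilde P$ on each interval between consecutive poles $-E_i/\nu_i$, read off $\beta_l=-1/g'(-\alpha_l)>0$ by partial fractions, invert, and pass to the internal-variable ODE by Leibniz/Duhamel --- is the standard and correct derivation, and it yields all the ingredients that the paper later re-uses: the residue formula underlying \eqref{eq:beta_l} and the interlacing bound underlying Lemma~\ref{lemma:pc_bound}. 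In fact your monotonicity argument for the interlacing is cleaner than the sign-count of $Q(E_k/\nu_k)$ in Lemma~\ref{lemma:pc_bound}, and your caveat about coincident ratios $E_i/\nu_i$ is exactly the same factoring step used there.

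Two small inaccuracies in the write-up are worth flagging, both of which point to likely typos in the statement rather than errors in your derivation.
First, carrying out the leading-coefficient cancellation that you describe gives
\begin{equation*}
E'_{\pc} \;=\; \frac{\sum_{i=1}^L d_i\,E_i/\nu_i^2}{\big(\sum_{i=1}^L d_i/\nu_i\big)^2},
\end{equation*}
with \emph{no} prefactor of $L$, consistent with substituting piecewise-constant $E,\nu$ into \eqref{eq:markov_params}. (A quick $L=2$ check confirms this.) The stated $E'_{\pc}$ in \eqref{eqn:PC-E-nu} carries an extra factor of $L$ that is only compatible with the normalization $d_i\equiv 1/L$, not with the general $\sum d_i=1$ assumed; you should not claim to recover ``exactly the stated $E'_{\pc}$'' without noting this discrepancy.
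Second, the sign convention you identify is real: with $\xi_l(t)=-\beta_l\int_0^t e^{-\alpha_l(t-s)}\epsbar(s)\,\ds$ one gets $\dot\xi_l=-\alpha_l\xi_l-\beta_l\epsbar$, so that $b=-(\beta_1,\dots,\beta_{L-1})$ has negative entries, whereas \eqref{eq:diffeq_constitutive} asserts $b\in\R_+^{L-1}$ together with a $+\langle\mathds{1},\xi\rangle$ coupling to match the $-\int K\epsbar$ of \eqref{eqn:kernelform}. One of those two signs must flip; you were right to flag it rather than silently absorb it.
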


Next, we build on the preceding proposition to derive the form of the coefficients $\beta_l$ explicitly; and then to conclude that the parameters of the differential equation~\cref{eq:diffeq_constitutive} depend continuously on the piecewise-constant material parameterization.
\begin{lemma}\label{lemma:pc_cont}
    The vector of coefficients $b = \{\beta_l\}_{l=1}^{L-1} \in \R_+^{L-1}$ from~\cref{eq:diffeq_constitutive} are defined by the following closed form
    for the inverse of the components:
    \begin{equation}\label{eq:beta_l}
        \beta_l^{-1} = \sum_{i=1}^L\frac{d_i}{\nu_i} \cdot \frac{1}{\big(\frac{E_i}{\nu_i} - \alpha_l\big)^2}.
    \end{equation}
    From this we conclude $(E'_{\pc}, \nu'_{\pc}, A, b)$ is a continuous functions of  material parameters $\{(d_i, E_i, \nu_i)\}_{i=1}^L$ provided these material parameters are all strictly positive.
\end{lemma}
\begin{proof}
First we establish expression \cref{eq:beta_l}. Taking the polynomials $P(s), Q(s)$ defined in \cref{thm:pc_restate}, and recalling that the $\beta_l$ are the residues around the poles of $P(s)/Q(s)$, we see that
    \begin{align*}
        \beta_l = \lim_{s \to \alpha_l}\frac{P(s)}{Q(s)}(s - \alpha_l) = -\lim_{s \to \alpha_l} \frac{\prod_{i=1}^L (E_i - \nu_is)(\alpha_l - s)}{\sum_{i=1}^Ld_i\prod_{j \neq i}(E_j - \nu_j s)}.
    \end{align*}
    Now applying l'H\^opital's rule, we get that
    \begin{align*}
        \beta_l = -\lim_{s \to \alpha_l} \frac{\alpha_l - s}{\sum_{i=1}^L\frac{d_i}{E_i - \nu_i s}} = \frac{1}{\sum_{i=1}^L\frac{d_i\nu_i}{(E_i - \nu_i\alpha_l)^2}} = \frac{1}{\sum_{i=1}^L\frac{d_i}{\nu_i} \cdot \frac{1}{\big(\frac{E_i}{\nu_i} - \alpha_l\big)^2}}.
    \end{align*}

    Note that the roots $\{\alpha_l\}_{l=1}^{L-1}$ are clearly continuous functions of the materials parameters $\{(d_i, E_i, \nu_i)\}_{i=1}^L$, as long as these parameters are strictly positive, because they depend continuously on the coefficients of the polynomial $Q$. Likewise, the coefficients $\{\beta_l\}_{l=1}^{L-1}$ and the Markovian parameters $E'_{\pc}, \nu'_{\pc}$ are continuous functions of the material parameters when these parameters are strictly positive. The continuity of the coefficients $\beta_l$ is easy to see except at the possible poles of its denominator where $\alpha_l = \frac{E_l}{\nu_l}$; but noting that $\beta_l$ must tend to zero at such poles, as a function of the material parameters, establishes continuity there.
\end{proof}

Examination of \cref{thm:pc_restate} shows that
the solution map $(\epsbar; E, \nu) \mapsto \sigmabar$ is invariant under permutation
of the piecewise-constant material pieces. This allows us to sort the pieces for mathematical convenience, as in the following lemma.

\begin{lemma}\label{lemma:pc_bound}
    Sort the ratios $\{\frac{E_l}{\nu_l}\}_{l=1}^L$ in increasing order.
    Then the roots $\{\alpha_l\}_{l=1}^{L-1}$ may also be sorted in increasing order, and satisfy the bounds
    \begin{equation}
        \frac{E_l}{\nu_l} \leq \alpha_l \leq \frac{E_{l+1}}{\nu_{l+1}}, \quad \ell \in [L-1];
    \end{equation}
equality is achieved if and only if $\frac{E_l}{\nu_l} = \frac{E_{l+1}}{\nu_{l+1}}$.
In this ordering it also follows that
    \begin{equation}
    \label{eq:ub}
        \beta_l \leq \frac{1}{\sum_{i=1}^L\frac{d_i}{\nu_i}}\Big(\frac{E_L}{\nu_L} - \frac{E_1}{\nu_1}\Big)^2
    \end{equation}
    where $\{\frac{E_l}{\nu_l}\}_{l=1}^L$ are ordered increasingly.
\end{lemma}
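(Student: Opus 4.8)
The plan is to establish the interlacing bounds on the roots $\{\alpha_l\}$ first, then deduce the bound on $\beta_l$ from the closed form~\eqref{eq:beta_l}. For the interlacing, I would work with the polynomial $Q(s) = \sum_{i=1}^L d_i \prod_{j \neq i}(E_j - \nu_j s)$. The key observation is that $Q$ is, up to a positive factor, the derivative-like object arising from partial fractions: dividing through by $P(s) = \prod_i (E_i - \nu_i s)$ gives $Q(s)/P(s) = \sum_{i=1}^L \frac{d_i}{E_i - \nu_i s}$. Each term $\frac{d_i}{E_i - \nu_i s}$ has a simple pole at $s = E_i/\nu_i$ with residue $-d_i/\nu_i < 0$, and is strictly increasing in $s$ on each interval between consecutive poles (its derivative is $\frac{d_i \nu_i}{(E_i - \nu_i s)^2} > 0$). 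Summing over $i$, the function $R(s) := \sum_i \frac{d_i}{E_i - \nu_i s}$ is strictly increasing on every open interval between consecutive distinct values of $E_i/\nu_i$, runs from $-\infty$ to $+\infty$ on each such interval (by the sign of the residues at the poles), and hence has exactly one zero there. Since the zeros of $R$ coincide with the zeros of $Q$ away from the poles of $P$, this gives exactly one $\alpha_l$ strictly between each pair of consecutive distinct ratios, yielding $E_l/\nu_l < \alpha_l < E_{l+1}/\nu_{l+1}$ when the ratios are distinct; the multiplicity bookkeeping when some ratios coincide (a repeated value $E_l/\nu_l = E_{l+1}/\nu_{l+1}$ is itself a root of $Q$ with the reduced multiplicity) gives the weak inequalities and the equality case. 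I would count degrees to confirm all $L-1$ roots are accounted for.

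Next, for the bound~\eqref{eq:ub} on $\beta_l$, I would start from~\eqref{eq:beta_l}, namely $\beta_l = \Big(\sum_{i=1}^L \frac{d_i}{\nu_i}\frac{1}{(E_i/\nu_i - \alpha_l)^2}\Big)^{-1}$, and bound the denominator from below. Since $E_l/\nu_l \le \alpha_l \le E_{l+1}/\nu_{l+1}$ and the ratios are sorted, every $\alpha_l$ lies in the interval $[E_1/\nu_1, E_L/\nu_L]$, so $|E_i/\nu_i - \alpha_l| \le E_L/\nu_L - E_1/\nu_1$ for all $i$. Therefore each summand satisfies $\frac{d_i}{\nu_i}\frac{1}{(E_i/\nu_i - \alpha_l)^2} \ge \frac{d_i}{\nu_i}\big(\frac{E_L}{\nu_L} - \frac{E_1}{\nu_1}\big)^{-2}$, and summing gives $\sum_i \frac{d_i}{\nu_i}\frac{1}{(E_i/\nu_i - \alpha_l)^2} \ge \big(\sum_i \frac{d_i}{\nu_i}\big)\big(\frac{E_L}{\nu_L} - \frac{E_1}{\nu_1}\big)^{-2}$. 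Taking reciprocals yields~\eqref{eq:ub}. (Some care is needed if $\alpha_l$ coincides with one of the $E_i/\nu_i$, making a summand infinite; but then $\beta_l = 0$ by the limiting argument in Theorem~\ref{thm:pc_cont}, and the bound holds trivially.)

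The main obstacle I anticipate is not the analytic bound but the careful handling of degenerate configurations where several ratios $E_i/\nu_i$ coincide. In that case the polynomials $P$ and $Q$ share common factors, the "simple pole" picture for $R(s)$ breaks down, and one must argue that after cancelling common factors the reduced rational function still has the interlacing structure, with a repeated ratio contributing a root of $Q$ at that exact value with appropriately reduced multiplicity. I would handle this by first proving the strict interlacing under the genericity assumption that all $L$ ratios are distinct, and then obtaining the general (weak) statement either by a continuity/perturbation argument — perturbing the $E_i$ slightly to separate the ratios and passing to the limit, using continuity of roots established in Theorem~\ref{thm:pc_cont} — or by directly tracking multiplicities. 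The permutation-invariance of the solution map, already noted before the lemma, justifies the initial sorting of the ratios.
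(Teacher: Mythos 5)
Your proof is correct, and it proves the same result the paper does, but the interlacing step is dressed differently. The paper evaluates $Q$ directly at the points $E_k/\nu_k$, computes $\mathrm{sign}\,Q(E_k/\nu_k)=(-1)^{k-1}$, and invokes a degree count to locate all roots strictly between consecutive ratios; you instead pass to the rational function $R(s)=Q(s)/P(s)=\sum_i d_i/(E_i-\nu_i s)$ and use $R'>0$ together with the $-\infty\to+\infty$ behaviour across each gap. The two mechanisms are close cousins — the sign alternation of $Q$ at the poles is exactly the residue-sign information you use — but yours has the small advantage of delivering uniqueness in each interval directly from strict monotonicity rather than by counting the $L-1$ roots after the fact (and it sidesteps the paper's harmless slip of saying $Q$ has $L$ roots when its degree is $L-1$). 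Both handle repeated ratios by factoring $(E_l/\nu_l-s)^{\mathcal N_l-1}$ out of $Q$ and reducing to the distinct case, and you add the sensible alternative of perturbing to distinct ratios and using root continuity from Theorem~\ref{thm:pc_cont}. For the $\beta_l$ bound the paper simply asserts it; your step — $\alpha_l\in[E_1/\nu_1,\,E_L/\nu_L]$ from interlacing, hence $|E_i/\nu_i-\alpha_l|\le E_L/\nu_L-E_1/\nu_1$ and a reciprocal — is exactly the intended argument and is correct, including the degenerate check that a pole in the denominator forces $\beta_l=0$.
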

\begin{proof}
    First, to show the interleaving property of the roots, suppose we order the indices $l \in [L]$ in increasing order of $\frac{E_l}{\nu_l}$ and assume that these ratios are unique with no repetitions so they are strictly increasing. Recall the polynomial
    \begin{equation}
        Q(s) = \sum_{i=1}^Ld_i\prod_{j \neq i}(E_j - \nu_j s)
    \end{equation}
    of which the $\alpha_l$ are roots. Then we have that
    \begin{equation}
        Q\Big(\frac{E_k}{\nu_k}\Big) = \prod_{l=1}^L \nu_l \cdot \sum_{l=1}^L\frac{d_l}{\nu_l}\prod_{j \neq l}\Big(\frac{E_j}{\nu_j} - \frac{E_k}{\nu_k}\Big) = \prod_{l=1}^L \nu_l \cdot \frac{d_k}{\nu_k}\prod_{j \neq k}\Big(\frac{E_j}{\nu_j} - \frac{E_k}{\nu_k}\Big).
    \end{equation}
    which implies that
    \begin{equation}
        \text{sign}\Big[Q\Big(\frac{E_k}{\nu_k}\Big)\Big] = (-1)^{k-1}
    \end{equation}
    Because the polynomial $Q$ has $L$ roots and alternates sign at every $\frac{E_{l}}{\nu_l}$ and must be nonzero at these points, it follows that the roots must lie strictly in between these points. Hence, we have that
    \begin{equation}\label{eq:alpha_discrete}
        \frac{E_l}{\nu_l} < \alpha_l < \frac{E_{l+1}}{\nu_{l+1}}
    \end{equation}
    where the inequalities above are strict. Now suppose again that we have a list of unique ratios $\{\frac{E_l}{\nu_l}\}_{l=1}^K$ sorted in strictly nondecreasing order, but every element in this list is repeated $\mathcal{N}_l$ times such that $\sum_{l=1}^K \mathcal{N}_l = L$. Then, by factoring out the term $\prod_{l=1}^K (E_l/\nu_l - s)^{\mathcal{N}_l-1}$ from $Q(s)$ and combining like terms, we arrive at a new polynomial of the same form as $Q(s)$ with all distinct ratios to which we can apply the previous argument above. This proves the interleaving property of the $\alpha_l$ roots. Using the expression for $\beta_l$ derived in~\cref{eq:beta_l}, we can also immediately establish the upper bound
    \cref{eq:ub} assuming again that the ratios $\frac{E_l}{\nu_l}$ are sorted in increasing order.
\end{proof}

\cref{lemma:pc_cont} and \cref{lemma:pc_bound} show that in the case of piecewise-constant materials, the coefficients of the differential equation constitutive law~\cref{eq:diffeq_constitutive} depend continuously on the collection of materials parameters $\{(d_i, E_i, \nu_i)\}_{i=1}^L$ and are bounded. We use this fact to show that we can approximate the stress-strain dynamics of continuously varying materials by their piecewise-constant discretizations.

\section{Universal Approximation}
\label{sec:A}
The central result of this section is a universal approximation theorem, for the homogenized stress-strain relation arising in one-dimensional Kelvin--Voigt viscoelasticity, within the class of FNM--RNO mappings. To achieve this we first establish Lipschitz properties of the 
cell problem, with respect to its dependence on material properties; see \cref{subsec:C}. We then show that the homogenized constitutive law defined by \eqref{eqn:Psi0} and \cref{eq:cell_problem} may be approximated by the homogenized constitutive law associated with a piecewise-constant approximation of the material, in \cref{subsec:4.2}. This result is then used, in \cref{subsec:neuralop}, to establish a universal approximation theorem for our proposed architecture: for any error $\e>0$ there exists a choice of parameters in FNM--RNO that leads to an $\e-$approximation of the map $\{\epsbar, E, \nu\} \mapsto \sigmabar$, uniformly across a compact set of inputs.

\subsection{Lipschitz Regularity of Cell Problem}
\label{subsec:C}
Here we show the Lipschitz regularity of the Kelvin--Voigt cell problem~\cref{eq:cell_problem}. Let $u_1, u_2$ be solutions corresponding to material parameters $(E_1, \nu_1)$ and $(E_2, \nu_2)$ respectively, both satisfying the conditions in \cref{assump:E_nu_eps}. We can write these cell problems as
\begin{subequations}\label{eq:cell_problem_i}
\begin{align}
     &\p_y\Big(\nu_i(y)\partial_{yt}u_i(y, t) + E_i(y)\p_y u_i(y, t)\Big) = \p_y\sigma_i(y, t) = 0, &\quad (y,t) &\in \Omega \times \cT \label{subeq:map_i}\\
    &u_i(0, t) = 0, \; u_i(1, t) = \epsbar(t), &\; t &\in \cT \\
    &u_i(y, 0) = 0, &\; y&\in \Omega.
\end{align}
\end{subequations}
where the strains corresponding to these stresses are given by
\begin{equation}\label{eq:stress_i}
    \sigma_i(y, t) = \nu_i(y)\p_{yt}u_i(y, t) + E_i(y)\p_y u_i(y, t), \quad (y,t) \in \Omega \times \cT
\end{equation}

Our goal is to bound the difference between the spatially averaged stresses $\sigmabar_1 = \langle\sigma_1, \mathds{1}\rangle$ and $\sigmabar_2 = \langle\sigma_2, \mathds{1}\rangle$ of these two cell problems based on the difference of their material parameters. We do this by first bounding the distance between the solutions $u_1, u_2$ of these two PDEs. Taking the two equations~\cref{subeq:map_i} satisfied by $u_1$ and $u_2$ we can rewrite them as
\begin{align*}
\p_y\Big(\nu_1\p_{yt}u_1 + E_1\p_y u_1\Big) &= 0\\
\p_y\Big(\nu_1\p_{yt}u_2 + E_1\p_y u_2\Big) &= \p_y\Big((\nu_1 - \nu_2)\partial_{yt}u_2 + (E_1 - E_2)\p_y u_2\Big).
\end{align*}
Defining the difference functions
\begin{equation}
    \gamma = u_1 - u_2, \quad \Delta E = E_1 - E_2, \quad \Delta\nu = \nu_1 - \nu_2, \quad g = \Delta\nu\partial_{yt}u_2 + \Delta E\p_yu_2,
\end{equation}
we can subtract the equations above to get
\begin{align*}
    \p_y\Big(\nu_1\p_{yt}\gamma + E_1\p_y\gamma\Big) = -\p_yg.
\end{align*}
Choosing any test function $\varphi \in H_0^1(\Omega)$ we can write the weak form of this PDE as
\begin{equation}\label{eq:gamma_weak}
    q_{\nu_1}(\p_t\gamma, \varphi) + q_{E_1}(\gamma, \varphi) = -\langle g, \p_y\varphi\rangle.
\end{equation}
Now, we are ready to state the following Lipschitz bound on the difference between $u_1$ and $u_2$. Both cell problem solutions $u_1, u_2$ can be viewed as functions of time that map into $L^2(\Omega; \R)$ so they live in the function space $\cZ = L^\infty(\cT; L^2(\Omega; \R))$. We will measure their difference under the norm $\|u\|_\cZ = \esssup_{t \in \cT}(\|u(\cdot, t)\|_{L^2})$.

\begin{lemma}\label{lem:u1_u2}
Let $u_i$ be the solution to the cell problem~\cref{eq:cell_problem_i} associated with material properties $E_i, \nu_i$ for $i = 1, 2$ and a time-varying boundary condition $\epsbar(t)$ satisfying \cref{assump:E_nu_eps}. Then we have the Lipschitz bound
\begin{equation}
    \|u_1 - u_2\|_\cZ \leq C_1\|\nu_1 - \nu_2\|_{L^2} + C_2\|E_1 - E_2\|_{L^2}
\end{equation}
where the constants $C_1, C_2 > 0$ depend only on $E_{\min}, E_{\max}, \nu_{\min}, \nu_{\max}$ and $\epsbar_{\max}, \dot{\epsbar}_{\max}$.
\end{lemma}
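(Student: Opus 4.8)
The plan is to use the standard energy method for the parabolic (in the sense of the viscous term being dominant) weak equation \eqref{eq:gamma_weak}, testing with $\varphi = \gamma(\cdot,t)$, and then to control the right-hand side forcing $g = \Delta\nu\,\partial_{yt}u_2 + \Delta E\,\partial_y u_2$ by $L^2$ norms of $\Delta\nu$, $\Delta E$ together with $L^\infty$-in-$y$ bounds on $\partial_y u_2$ and $\partial_{yt}u_2$. The first step is therefore to record a priori estimates for a single cell-problem solution $u_2$: using the decomposition $u_2 = p_2 + \epsbar(t)y$ from \eqref{eq:periodic_decomp}, the detrended problem \eqref{eq:detrended_cell_problem}, and the weak form, one shows $\|p_2(\cdot,t)\|_{H_0^1}$ and $\|\partial_t p_2(\cdot,t)\|_{H_0^1}$ are bounded uniformly in $t$ by constants depending only on $E_{\min},E_{\max},\nu_{\min},\nu_{\max},\epsbar_{\max},\dot\epsbar_{\max}$ — and hence the same holds for $\partial_y u_2$ and $\partial_{yt}u_2$. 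In one dimension this is where we need a little care: the energy method gives control only in $H_0^1$, but $\partial_y u_2(\cdot,t)\in H_0^1(\Omega)$ embeds into $L^\infty(\Omega)$ in dimension one (indeed the cell problem $-\partial_y\sigma=0$ forces $\sigma(y,t)=\sigma(t)$ constant in $y$, so $\partial_y u_2 = (\sigma(t)-E(y)\cdot 0\dots)$—more directly, from $\sigma=$ const one solves $\partial_y u_2 = \big(\sigma(t) - \text{lower order}\big)/\text{coeff}$ pointwise), giving the pointwise-in-$y$ bounds we need for the forcing estimate. I would present this as a preliminary lemma (or cite the analogous estimate from \cite{bhattacharya2023learning}).

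With those bounds in hand, the core computation is: set $\varphi = \gamma(\cdot,t)$ in \eqref{eq:gamma_weak}, obtaining
\[
\tfrac{1}{2}\tfrac{d}{dt}\|\gamma(\cdot,t)\|_{H_0^1,\nu_1}^2 + \|\gamma(\cdot,t)\|_{H_0^1,E_1}^2 = -\langle g(\cdot,t), \partial_y\gamma(\cdot,t)\rangle.
\]
Bound the right-hand side by Cauchy–Schwarz and Young: $|\langle g,\partial_y\gamma\rangle| \le \tfrac{1}{2E_{\min}}\|g(\cdot,t)\|_{L^2}^2 + \tfrac{E_{\min}}{2}\|\partial_y\gamma(\cdot,t)\|_{L^2}^2 \le \tfrac{1}{2E_{\min}}\|g(\cdot,t)\|_{L^2}^2 + \tfrac12\|\gamma(\cdot,t)\|_{H_0^1,E_1}^2$, absorbing the second term into the left. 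Then $\|g(\cdot,t)\|_{L^2} \le \|\partial_{yt}u_2(\cdot,t)\|_{L^\infty}\|\Delta\nu\|_{L^2} + \|\partial_y u_2(\cdot,t)\|_{L^\infty}\|\Delta E\|_{L^2}$, which by the preliminary bounds is $\le C(\|\Delta\nu\|_{L^2} + \|\Delta E\|_{L^2})$. Integrating in time from $0$ (where $\gamma=0$) gives $\esssup_{t}\|\gamma(\cdot,t)\|_{H_0^1,\nu_1}^2 \le C T (\|\Delta\nu\|_{L^2}+\|\Delta E\|_{L^2})^2$; by norm equivalence (Lemma~\ref{lem:norm_equiv}) this controls $\|\gamma(\cdot,t)\|_{H_0^1}$, and then the Poincaré inequality converts the $H_0^1$ bound into the $L^2$ bound defining $\|\cdot\|_\cZ$. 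Finally $(a+b)\le\sqrt2\,(a^2+b^2)^{1/2}$ or simple splitting separates the two terms to produce the stated form with distinct constants $C_1,C_2$.

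The main obstacle is the pointwise-in-$y$ control of $\partial_y u_2$ and $\partial_{yt}u_2$, i.e. making the forcing estimate $\|g\|_{L^2}\lesssim \|\Delta\nu\|_{L^2}+\|\Delta E\|_{L^2}$ rigorous rather than, say, ending up with a weaker bound in terms of $\|\Delta\nu\|_{L^\infty}$. The clean way around it is precisely the one-dimensional structure: $\partial_y\sigma_2=0$ means $\sigma_2(y,t)$ is a function of $t$ alone, so from $\sigma_2 = \nu_2\partial_{yt}u_2 + E_2\partial_y u_2$ one can solve for $\partial_{yt}u_2$ and $\partial_y u_2$ in terms of $\sigma_2(t)$ and the coefficients $E_2,\nu_2$ (which are bounded above and below), and $\sigma_2(t) = \sigma_2(t)\int_0^1 \mathds{1}\,dy$ is itself bounded via the a priori energy estimate since it equals the spatially averaged stress and is driven by $\epsbar,\dot\epsbar \in \cC_{\epsbar_{\max},\dot\epsbar_{\max}}$. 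One must also keep track that all intermediate constants genuinely depend only on the six parameters listed (and $T$, absorbed into constants), which is routine bookkeeping. A secondary subtlety is justifying $\tfrac{d}{dt}\|\gamma\|_{H_0^1,\nu_1}^2 = 2q_{\nu_1}(\partial_t\gamma,\gamma)$, i.e. enough regularity of $\gamma$ in time; this follows from $u_i \in C^1(\cT;H_0^1(\Omega))$ as posited for the weak solutions in \eqref{eq:wf}, so no additional work is needed beyond citing it.
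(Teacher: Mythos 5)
Your plan follows essentially the same route as the paper's proof: form the error equation for $\gamma = u_1 - u_2$, test with $\varphi = \gamma$, control $\sup_t\|\gamma\|_{H_0^1,\nu_1}$ by $\|g\|_{\cZ}$, pass to $\|\cdot\|_{\cZ}$ via Poincar\'e, and bound $g = \Delta\nu\,\partial_{yt}u_2 + \Delta E\,\partial_y u_2$ by $\|\Delta\nu\|_{L^2} + \|\Delta E\|_{L^2}$ using a-priori estimates on $u_2$.

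There is one gap in your execution: the $T$-dependence. After Young's inequality you absorb all of $\|\gamma\|_{H_0^1,E_1}^2$ and then merely integrate in time, yielding $\esssup_t\|\gamma(\cdot,t)\|_{H_0^1,\nu_1}^2 \le CT(\cdots)^2$. But the lemma asserts that $C_1, C_2$ depend only on the six listed constants, not on $T$, and you cannot simply ``absorb $T$ into the constants.'' The fix (and what the paper's Proposition~\ref{prop:gamma_bounds} does) is to retain half of $\|\gamma\|_{H_0^1,E_1}^2$, lower-bound it by $\tfrac{E_{\min}}{\nu_{\max}}\|\gamma\|_{H_0^1,\nu_1}^2$ via Lemma~\ref{lem:norm_equiv}, and apply Gronwall to the resulting differential inequality with a decay term, which yields a $T$-uniform bound.

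On the other hand, your recognition that bounding $\|g(\cdot,t)\|_{L^2}$ by $\|\Delta\nu\|_{L^2}+\|\Delta E\|_{L^2}$ requires $L^\infty$-in-$y$ control of $\partial_y u_2$ and $\partial_{yt}u_2$ --- not merely the $L^2$-in-$y$ control supplied by $H^1_0$ energy estimates --- is a sharper observation than what appears in the paper's proof: the step from $\|\Delta\nu\,\partial_{yt}u_2\|_{L^2}$ to $\|\partial_{yt}u_2\|_{L^2}\|\Delta\nu\|_{L^2}$ labeled Cauchy--Schwarz there is not a valid H\"older pairing, and Corollary~\ref{cor:u_bounds} only supplies $H^1_0$ bounds. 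Your proposed route --- using the one-dimensional constant-in-$y$ stress to solve the pointwise linear ODE $\nu_2(y)\partial_t e + E_2(y) e = \sigma_2(t)$ for $e = \partial_y u_2$, with $\sigma_2(t)$ bounded from the energy estimates and Assumptions~\ref{assump:E_nu_eps} --- correctly supplies the needed pointwise bounds, and should be stated and proved as a preliminary lemma rather than cited as an $H^1_0$ estimate.
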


\begin{proof}
In \cref{prop:gamma_bounds}, we establish that
\begin{equation}
    \sup_{t \in \cT}\|\gamma\|_{H_0^1, \nu_1} \leq \frac{\nu_{\max}}{E_{\min}}\frac{1}{\sqrt{\nu_{\min}}}\|g\|_\cZ.
\end{equation}
Combining this with \cref{lem:norm_equiv} gives us
\begin{align*}
    \sup_{t \in \cT}\|\gamma\|_{H_0^1} \leq \frac{\nu_{\max}}{E_{\min}\nu_{\min}}\|g\|_\cZ.
\end{align*}
By the Poincar\'e inequality and the definition of the norm on $\cZ$ we have that $\|\gamma\|_\cZ \leq C_p\sup_{t \in \cT}\|\gamma\|_{H_0^1}$ for some constant $C_p > 0$ and hence,
\begin{equation}\label{eq:u1_u2}
    \|u_1 - u_2\|_\cZ = \|\gamma\|_\cZ \leq C_p\frac{\nu_{\max}}{E_{\min}\nu_{\min}}\|g\|_\cZ.
\end{equation}
Thus we focus on  bounding the norm of $g$. By Cauchy-Schwarz we write
\begin{align*}
    \|g\|_\cZ &= \|\Delta\nu\partial_{yt}u_2 + \Delta E\p_yu_2\|_\cZ\\
    &\leq \sup_{t \in \cT}\|\partial_{yt}u_2\|_{L^2}\|\Delta\nu\|_{L^2} + \sup_{t \in \cT}\|\p_yu_2\|_{L^2}\|\Delta E\|_{L^2}\\
    &\leq \sup_{t \in \cT}\|\p_tu_2\|_{H_0^1}\|\Delta\nu\|_{L^2} + \sup_{t \in \cT}\|u_2\|_{H_0^1}\|\Delta E\|_{L^2}\\
    &\leq \frac{1}{\sqrt{\nu_{\min}}}\Big(\sup_{t \in \cT}\|\p_tu_2\|_{H_0^1, \nu_2}\|\Delta\nu\|_{L^2} + \sup_{t \in \cT}\|u_2\|_{H_0^1, \nu_2}\|\Delta E\|_{L^2}\Big)
\end{align*}
where the last line follows again from \cref{lem:norm_equiv}. In \cref{cor:u_bounds}, we bound the solution of the cell problem
to show that $\sup_{t \in \cT}\|u_2\|_{H_0^1, \nu_2}$ and $\sup_{t \in \cT}\|\p_tu_2\|_{H_0^1, \nu_2}$ are finite, which implies that
\begin{align*}
    \|g\|_\cZ \leq C_1'\|\nu_1 - \nu_2\|_{L^2} + C_2'\|E_1 - E_2\|_{L^2}
\end{align*}
for constants $C_1', C_2' > 0$ that depend only on $E_{\min}, E_{\max}, \nu_{\min}, \nu_{\max}$ and $\epsbar_{\max}, \dot{\epsbar}_{\max}$. Finally, combining this with~\cref{eq:u1_u2}, gives us
\begin{equation}
    \|u_1 - u_2\|_\cZ \leq C_1\|\nu_1 - \nu_2\|_{L^2} + C_2\|E_1 - E_2\|_{L^2}
\end{equation}
where constants $C_1, C_2 > 0$ depend only on $E_{\min}, E_{\max}, \nu_{\min}, \nu_{\max}$ and $\epsbar_{\max}, \dot{\epsbar}_{\max}$.
\end{proof}

We now use the lemma above to show that the stress resulting from the cell problem also satisfies Lipschitz regularity with respect to the material parameters.

\begin{lemma}\label{lem:sigma1_sigma2}
Let $\sigma_i$ be the stress~\cref{eq:stress_i} resulting from the solution $u_i$ of the cell problem~\cref{eq:cell_problem_i} associated with material properties $E_i, \nu_i$ for $i = 1, 2$ and a time-varying boundary condition $\epsbar(t)$ satisfying \cref{assump:E_nu_eps}. Then we have the Lipschitz bound
\begin{equation}
    \|\sigma_1 - \sigma_2\|_\cZ \leq C_1\|\nu_1 - \nu_2\|_{L^2} + C_2\|E_1 - E_2\|_{L^2}
\end{equation}
where constants $C_1, C_2 > 0$ depend only on $E_{\min}, E_{\max}, \nu_{\min}, \nu_{\max}$ and $\epsbar_{\max}, \dot{\epsbar}_{\max}$.
Define the spatial averages of the two stresses as $\sigmabar_1 = \langle\sigma_1, \mathds{1}\rangle$ and $\sigmabar_2 = \langle\sigma_2, \mathds{1}\rangle$ where $\mathds{1}$ is the constant function taking value one in $\Omega$. Then
\begin{equation}
        \|\sigmabar_1 - \sigmabar_2\|_{L^\infty} \leq C_1\|\nu_1 - \nu_2\|_{L^2} + C_2\|E_1 - E_2\|_{L^2}.
    \end{equation}
\end{lemma}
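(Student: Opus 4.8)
The plan is to exploit that, in one spatial dimension, balance of forces forces each stress $\sigma_i(\cdot,t)$ to be constant in $y$. Consequently $\sigma_1-\sigma_2$ is also spatially constant, and since $|\Omega|=1$ we have $\|\sigma_1(\cdot,t)-\sigma_2(\cdot,t)\|_{L^2}=|\sigma_1(t)-\sigma_2(t)|$, so $\|\sigma_1-\sigma_2\|_\cZ=\esssup_{t\in\cT}|\sigma_1(t)-\sigma_2(t)|$; moreover, by Cauchy--Schwarz and $\|\mathds{1}\|_{L^2(\Omega)}=1$,
\begin{equation*}
\|\sigmabar_1-\sigmabar_2\|_{L^\infty}=\esssup_{t\in\cT}|\langle\sigma_1-\sigma_2,\mathds{1}\rangle|\le\esssup_{t\in\cT}\|\sigma_1(\cdot,t)-\sigma_2(\cdot,t)\|_{L^2}=\|\sigma_1-\sigma_2\|_\cZ.
\end{equation*}
Hence it suffices to bound $|\sigma_1(t)-\sigma_2(t)|$ uniformly in $t$, and both displayed inequalities of the lemma follow with the same constants.

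For the main estimate I would use the pointwise (for a.e.\ $y\in\Omega$, a.e.\ $t\in\cT$) identity
\begin{equation*}
\sigma_1(t)-\sigma_2(t)=\nu_1(y)\p_{yt}\gamma(y,t)+E_1(y)\p_y\gamma(y,t)+g(y,t),
\end{equation*}
with $\gamma=u_1-u_2$, $\Delta E=E_1-E_2$, $\Delta\nu=\nu_1-\nu_2$, and $g=\Delta\nu\,\p_{yt}u_2+\Delta E\,\p_yu_2$, exactly as in the derivation preceding Lemma~\ref{lem:u1_u2}; the left side is $y$-independent since $\p_y(\nu_1\p_{yt}\gamma+E_1\p_y\gamma)=-\p_yg$. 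The key step is to divide this identity by $\nu_1$ and integrate over $\Omega$. Because $u_i\in C^1(\cT;H^1(\Omega))$ with $u_1(1,t)=u_2(1,t)=\epsbar(t)$ and $u_i(0,t)=0$, the function $\p_t\gamma(\cdot,t)$ lies in $H_0^1(\Omega)$ for a.e.\ $t$, so $\int_\Omega\p_{yt}\gamma\dy=\int_\Omega\p_y(\p_t\gamma)\dy=0$; thus the troublesome time-derivative term integrates away, leaving
\begin{equation*}
\sigma_1(t)-\sigma_2(t)=\Big(\int_\Omega\tfrac{1}{\nu_1(y)}\dy\Big)^{-1}\Big(\int_\Omega\tfrac{E_1(y)}{\nu_1(y)}\p_y\gamma(y,t)\dy+\int_\Omega\tfrac{g(y,t)}{\nu_1(y)}\dy\Big).
\end{equation*}
Estimating with $\nu_1\ge\nu_{\min}$, $E_1\le E_{\max}$, $\int_\Omega\nu_1^{-1}\dy\ge\nu_{\max}^{-1}$, and Cauchy--Schwarz (again using $\|\mathds{1}\|_{L^2(\Omega)}=1$) yields
\begin{equation*}
|\sigma_1(t)-\sigma_2(t)|\le\frac{\nu_{\max}}{\nu_{\min}}\Big(E_{\max}\|\p_y\gamma(\cdot,t)\|_{L^2}+\|g(\cdot,t)\|_{L^2}\Big).
\end{equation*}

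To finish, I would take $\esssup_{t\in\cT}$ and insert the bounds already produced inside the proof of Lemma~\ref{lem:u1_u2}: namely $\sup_{t\in\cT}\|\p_y\gamma\|_{L^2}=\sup_{t\in\cT}\|\gamma\|_{H_0^1}\le\frac{\nu_{\max}}{E_{\min}\nu_{\min}}\|g\|_\cZ$ (via Proposition~\ref{prop:gamma_bounds} and Lemma~\ref{lem:norm_equiv}) together with $\|g\|_\cZ\le C_1'\|\nu_1-\nu_2\|_{L^2}+C_2'\|E_1-E_2\|_{L^2}$, where $C_1',C_2'$ depend only on $E_{\min},E_{\max},\nu_{\min},\nu_{\max},\epsbar_{\max},\dot{\epsbar}_{\max}$ (via Corollary~\ref{cor:u_bounds}). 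Chaining these three inequalities gives $\|\sigma_1-\sigma_2\|_\cZ\le C_1\|\nu_1-\nu_2\|_{L^2}+C_2\|E_1-E_2\|_{L^2}$ with constants of the claimed form, and the averaged estimate follows from the first paragraph. I expect the only genuine obstacle to be the time-derivative term $\nu_1\p_{yt}\gamma$ appearing in $\sigma_1-\sigma_2$: a naive triangle-inequality bound would need control of $\sup_{t}\|\p_{yt}\gamma\|_{L^2}$ by the material differences, which is delicate because the forcing $g$ is merely bounded (not differentiable) in time when $\epsbar\in W^{1,\infty}$ only; the spatial-constancy argument above is precisely what removes that term by integrating the force-balance identity against $\nu_1^{-1}$, so that only the already-controlled quantities $\|\gamma\|_{H_0^1}$ and $\|g\|_\cZ$ remain.
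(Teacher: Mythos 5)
Your proof is correct and takes a genuinely different route from the paper's. The paper bounds $\|\sigma_1-\sigma_2\|_\cZ$ by the triangle inequality applied to $\nu_1\p_{yt}\gamma + E_1\p_y\gamma + g$, picking up the three terms $\nu_{\max}\sup_t\|\p_t\gamma\|_{H_0^1}$, $E_{\max}\sup_t\|\gamma\|_{H_0^1}$, and $\|g\|_\cZ$, and then invokes \emph{both} parts (a) and (b) of Proposition~\ref{prop:gamma_bounds} to control the first two. You instead exploit the one-dimensional structure, namely that $\sigma_1-\sigma_2$ is spatially constant, to integrate the pointwise stress identity against $\nu_1^{-1}$ over $\Omega$; since $\p_t\gamma(\cdot,t)\in H_0^1(\Omega)$ the term $\int_\Omega\p_{yt}\gamma\,\dy$ vanishes, leaving only $\|\gamma\|_{H_0^1}$ and $\|g\|_{L^2}$. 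This lets you get away with Proposition~\ref{prop:gamma_bounds}(a) alone and dispenses entirely with the bound on $\p_t\gamma$, a genuine (if modest) simplification. One small clarification to your closing remark: the worry that a direct triangle-inequality estimate of $\sup_t\|\p_{yt}\gamma\|_{L^2}$ would be delicate "because $g$ is merely bounded in time" does not actually materialize, as the paper's Proposition~\ref{prop:gamma_bounds}(b) establishes $\sup_t\|\p_t\gamma\|_{H_0^1,\nu_1}\leq C\|g\|_\cZ$ by testing the weak form with $\varphi=\p_t\gamma$, without ever differentiating $g$ in time. So both routes succeed; yours simply avoids one lemma, at the cost of relying more heavily on the 1D spatial-constancy of stress (which, to be fair, the paper also invokes at the very end to pass to the averaged bound).
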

\begin{proof}
We define $\gamma, \Delta E, \Delta\nu$ and $g$ as before and note that
    \begin{align*}
        \|\sigma_1 - \sigma_2\|_\cZ &= \|\nu_1\p_{yt}\gamma + E_1\p_y\gamma\|_\cZ + \|g\|_\cZ\\
        &\leq \nu_{\max}\|\p_{yt}\gamma\|_\cZ + E_{\max}\|\p_y\gamma\|_\cZ + \|g\|_\cZ\\
        &= \nu_{\max}\sup_{t \in \cT}\|\p_t\gamma\|_{H_0^1} + E_{\max}\sup_{t \in \cT}\|\gamma\|_{H_0^1} + \|g\|_\cZ.
    \end{align*}
    We prove in \cref{prop:gamma_bounds} that $\sup_{t \in \cT}\|\p_t\gamma\|_{H_0^1}$ and $\sup_{t \in \cT}\|\gamma\|_{H_0^1}$ can both be bounded by constant multiples of $\|g\|_\cZ$. Hence, we can bound
    \begin{equation}
        \|\sigma_1 - \sigma_2\|_\cZ \leq C\|g\|_\cZ
    \end{equation}
    for a constant $C > 0$ that depends only on $E_{\min}, E_{\max}, \nu_{\min}, \nu_{\max}$ and $\epsbar_{\max}, \dot{\epsbar}_{\max}$. Finally, using the bound we derived on $\|g\|_\cZ$ in the proof of \cref{lem:u1_u2}, this shows that
    \begin{equation}
        \|\sigma_1 - \sigma_2\|_\cZ \leq C_1\|\nu_1 - \nu_2\|_{L^2} + C_2\|E_1 - E_2\|_{L^2}
    \end{equation}
    where $C_1, C_2 > 0$ are constants that depend only on $E_{\min}, E_{\max}, \nu_{\min}, \nu_{\max}$ and $\epsbar_{\max}, \dot{\epsbar}_{\max}$. The desired result about spatial averages follows by noting that
    $\sigma_i$ is in fact constant in $D$.
\end{proof}

Finally, these results prove that the strain-to-stress map $\Psi_0$ of a one-dimensional homogenized Kelvin-Voigt material is Lipschitz with respect to the material parameters.

\begin{corollary}\label{cor:Psi0_lip}
    For any material parameters $E_1, \nu_1$ and $E_1, \nu_2$ along with average strain input $\epsbar$ satisfying \cref{assump:E_nu_eps}, we have that
    \begin{equation}
        \|\Psi_0(\epsbar; E_1, \nu_1) - \Psi_0(\epsbar; E_2, \nu_2)\|_{L^\infty} \leq C_1\|\nu_1 - \nu_2\|_{L^2} + C_2\|E_1 - E_2\|_{L^2}
    \end{equation}
    where $C_1, C_2 > 0$ depend only on $E_{\min}, E_{\max}, \nu_{\min}, \nu_{\max}$ and $\epsbar_{\max}, \dot{\epsbar}_{\max}$.
\end{corollary}

\subsection{Approximation of the PDE by Piecewise-Constant Problems}
\label{subsec:4.2}
In this section, we use the Lipschitz property of the cell problem derived above to show that the homogenized Kelvin--Voigt constitutive model can be well-approximated by a strain-to-stress map that takes the material microstructure parameters $E, \nu$, discretizes them into piecewise-constant functions, and then applies the exact analytical formula for the piecewise-constant strain-to-stress map derived in Section~\ref{subsec:mat_dep}. This strain-to-stress map is denoted by
\begin{equation}\label{eqn:Psipc}
\begin{gathered}
        \Psi^{\pc}: C^1(\cT; \R) \times \cM_{E_{\min}, E_{\max}}^B \times \cM_{\nu_{\min}, \nu_{\max}}^B \to C(\cT; \R),\\
    (\{\epsbar(t)\}_{t\in \cT}, E, \nu) \mapsto \{\sigmabar_{\pc}(t)\}_{t\in \cT},
\end{gathered}
\end{equation}
and is given by a differential equation model
\begin{equation}\label{eq:pc_model}
\begin{aligned}
\sigmabar_{\pc}(t) &= F_{\pc}(\epsbar(t), \dot{\epsbar}(t), \xi(t); E, \nu)\\
\dot{\xi}_{\pc}(t) &= G_{\pc}(\epsbar(t), \xi(t); E, \nu)\\
\xi_{\pc}(0) &= 0.
\end{aligned}
\end{equation}
Note that the resulting definition of $\Psi^{\pc}$ is then causal (see \cref{rem:causal}).
The average strain and strain rate are $\epsbar(t), \dot{\epsbar}(t) \in \R$ and the ODE evolves a set of $n$-dimensional internal variables $\xi_{\pc}(t) \in \R^n$. We define the maps
\begin{equation}
\begin{aligned}
    &F_{\pc}: \R^{n+2} \times \cM_{E_{\min}, E_{\max}}^B \times \cM_{\nu_{\min}, \nu_{\max}}^B \to \R\\
    &G_{\pc}: \R^{n+1} \times \cM_{E_{\min}, E_{\max}}^B \times \cM_{\nu_{\min}, \nu_{\max}}^B \to \R
\end{aligned}
\end{equation}
which have the following semi-linear form:
\begin{subequations}\label{eqn:FGpc}
\begin{align}
F_{\pc}(\epsbar(t), \dot{\epsbar}(t), \xi(t); E, \nu) & = E_{\pc}'(E, \nu)\epsbar(t) + \nu_{\pc}'(E, \nu)\dot{\epsbar}(t) + \ip{\mathds{1}}{\xi(t)} \\
G_{\pc}(\epsbar(t), \xi(t); E, \nu) & = -A(E, \nu)\xi(t) + b(E, \nu)\epsbar(t).
\end{align}
\end{subequations}
The coefficients of $F_{\pc}, G_{\pc}$ act linearly on the strain $\epsbar$, strain rate $\dot{\epsbar}$, and hidden state $\xi_{\pc}$. Because the entries of $A(E, \nu), b(E, \nu)$ are bounded (see \cref{eq:Ab_bound}), the dynamics of $\xi_{\pc}$ in \cref{eqn:FGpc} are continuous and hence $\sigmabar_{\pc} \in C(\cT; \R)$ as expected.

For any microstructure $E, \nu: \T \to \R$ define its \textit{piecewise-constant discretization} $E_{\pc}, \nu_{\pc}: \T \to \R$ with $n+1$ pieces as
\begin{equation}
\begin{aligned}
    E_{\pc}(y) &= {E_{\pc}}_i := (n+1)\int_{y_i}^{y_{i+1}}E(z)\dz, \\
    \nu_{\pc}(y) &= {\nu_{\pc}}_i = (n+1)\int_{y_i}^{y_{i+1}}\nu(z)\dz, \quad y \in [y_i, y_{i+1})
\end{aligned}
\end{equation}
for $i \in [n+1]$ where $y_i = \tfrac{i-1}{n+1}$. The coefficient functions of the ODE model~\cref{eqn:FGpc} are
\begin{equation}
    \begin{aligned}
        A&: \cM_{E_{\min}, E_{\max}}^B \times \cM_{\nu_{\min}, \nu_{\max}}^B \to \D_+^n\\
        b&: \cM_{E_{\min}, E_{\max}}^B \times \cM_{\nu_{\min}, \nu_{\max}}^B \to \R_+^n\\
        E'_{\pc}, \nu'_{\pc}&: \cM_{E_{\min}, E_{\max}}^B \times \cM_{\nu_{\min}, \nu_{\max}}^B \to \R_+,
    \end{aligned}
\end{equation}
defined in Proposition~\ref{thm:pc_restate} and Lemma~\ref{lemma:pc_cont} as continuous functions of the piecewise constants $\{{E_{\pc}}_i, {\nu_{\pc}}_i\}_{i=1}^{n+1}$, and hence implicitly, as continuous functions of the original microstructure $(E, \nu) \in \cM_{E_{\min}, E_{\max}}^B \times \cM_{\nu_{\min}, \nu_{\max}}^B$. Since the diagonal of $A$ is nonnegative, the dynamics of~\cref{eq:pc_model} are stable. In fact, by \cref{lemma:pc_bound} the diagonal entries of the matrix $A$ and the entries of the vector $b$ are bounded by
\begin{equation}\label{eq:Ab_bound}
    \frac{E_{\min}}{\nu_{\max}} \leq \text{diag}(A) \leq \frac{E_{\max}}{\nu_{\min}}, \quad 0 \leq b \leq \nu_{\max}\big(\frac{E_{\max}}{\nu_{\min}} - \frac{E_{\min}}{\nu_{\max}}\big)^2.
\end{equation}

Note that if $E, \nu$ are piecewise-constant materials with $n+1$ equisized pieces, in other words if $E = E_{\pc}$ and $\nu = \nu_{\pc}$, then the piecewise constant and true constitutive laws agree exactly
\begin{align*}
    \Psi^{\pc}(\epsbar; E_{\pc}, \nu_{\pc}) = \Psi_0(\epsbar; E_{\pc}, \nu_{\pc})
\end{align*}
as a result of \cref{thm:pc_restate,lemma:pc_cont}. For a general microstructure however, $\Psi^{\pc}$ will only serve as an approximation to the true constitutive law $\Psi_0$ of the material.

The basic idea behind using $\Psi^{\pc}$ to approximate the constitutive law
of any material is as follows. The homogenized constitutive model for any reasonable choice of material properties $E,\nu$ can be approximated by the constitutive model arising from making a piecewise-constant approximation $E_{\pc}, \nu_{\pc}$ of the material properties. In fact, we show that $\Psi^{\pc}$ approximates $\Psi_0$ \textit{uniformly} over all materials $E, \nu \in \cM_{E_{\min}, E_{\max}}^B \times \cM_{\nu_{\min}, \nu_{\max}}^B$ to arbitrary accuracy $\e$, for choice of $n$ sufficiently large.

\begin{proposition}\label{prop:pc_approx}
    For any material parameters $E, \nu$ and average strain input $\epsbar$ satisfying \cref{assump:E_nu_eps} and any $\e > 0$, there exists a differential equation strain-to-stress map $\Psi^{\pc}$ defined by \cref{eqn:Psipc} with dimension $n = n(\e)$ such that it uniformly approximates the true strain-to-stress map $\Psi_0$ defined by \cref{eq:cell_problem} of the homogenized material to accuracy $\e$, in the following sense:
    \begin{equation}
        \sup_{\substack{E \in \cM_{E_{\min}, E_{\max}}^B\\ \nu \in \cM_{\nu_{\min}, \nu_{\max}}^B}}\sup_{\epsbar \in \cC_{\epsbar_{\max}, \dot{\epsbar}_{\max}}} \|\Psi^{\pc}(\epsbar; E, \nu) - \Psi_0(\epsbar; E, \nu)\|_{L^\infty} < \e.
    \end{equation}
\end{proposition}
\begin{proof}
    We begin by taking any $E \in \cM_{E_{\min}, E_{\max}}^B$, $\nu \in \cM_{\nu_{\min}, \nu_{\max}}^B$ and $\epsbar \in \cC_{\epsbar_{\max}, \dot{\epsbar}_{\max}}$. Assume that the differential equation model $\Psi^{\pc}$ has hidden state dimension $n = n(\e)$ where we will specify the dependence of $n$ on $\e$ below. Because $\Psi^{\pc}(\epsbar; E_{\pc}, \nu_{\pc}) = \Psi_0(\epsbar; E_{\pc}, \nu_{\pc})$, using \cref{cor:Psi0_lip}, we now have that
    \begin{equation}\label{eq:sigma_bound}
        \|\Psi^{\pc}(\epsbar; E_{\pc}, \nu_{\pc}) - \Psi_0(\epsbar; E, \nu)\|_{L^\infty} \leq C_1\|\nu_{\pc} - \nu\|_{L^2} + C_2\|E_{\pc} - E\|_{L^2}
    \end{equation}
    where $C_1, C_2 > 0$ are constants that depend on $E_{\min}, E_{\max}, \nu_{\min}, \nu_{\max}$ and $\epsbar_{\max}, \dot{\epsbar}_{\max}$. Note that $E, E_{\pc}$ and $\nu, \nu_{\pc}$ are bounded so in particular
    \begin{align*}
        |\nu_{\pc}(x) - \nu(x)| \leq \nu_{\max} - \nu_{\min}, \quad |E_{\pc}(x) - E(x)| \leq E_{\max} - E_{\min}
    \end{align*}
    for almost every $x \in \Omega$. This implies, by $L^1$-$L^\infty$ interpolation of $L^2$, that
    \begin{equation}\label{eq:L2_L1}
    \begin{aligned}
        \|\nu_{\pc} - \nu\|_{L^2} &\leq (\nu_{\max} - \nu_{\min})^\frac{1}{2}\|\nu_{\pc} - \nu\|_{L^1}^\frac{1}{2},\\
        \|E_{\pc} - E\|_{L^2} &\leq (E_{\max} - E_{\min})^\frac{1}{2}\|E_{\pc} - E\|_{L^1}^\frac{1}{2}.
    \end{aligned}
    \end{equation}
    Combining~\cref{eq:sigma_bound} with~\cref{eq:L2_L1} gives us
    \begin{align*}
        &\|\Psi^{\pc}(\epsbar; E_{\pc}, \nu_{\pc}) - \Psi_0(\epsbar; E, \nu)\|_{L^\infty}\\
        &\quad\leq C_1(\nu_{\max} - \nu_{\min})^\frac{1}{2}\|\nu_{\pc} - \nu\|_{L^1}^\frac{1}{2} + C_2(E_{\max} - E_{\min})^\frac{1}{2}\|E_{\pc} - E\|_{L^1}^\frac{1}{2}.
    \end{align*}
    
    As proven in \cref{lem:bv_l1_approx}, piecewise constant functions with $n+1$ pieces can uniformly approximate functions of bounded variation with total variation at most $B$ with error
    \begin{align*}
        \|\nu_{\pc} - \nu\|_{L^1} &\leq \frac{B}{L+1} < \frac{1}{\nu_{\max} - \nu_{\min}}\Big(\frac{\e}{2C_1}\Big)^2, \\
        \|E_{\pc} - E\|_{L^1} &\leq \frac{B}{L+1} < \frac{1}{E_{\max} - E_{\min}}\Big(\frac{\e}{2C_2}\Big)^2
    \end{align*}
    assuming we set $n = n(\e) = C'\frac{B}{\e^2} - 1$. Here $C'$ is a constant that depends on $E_{\min}, E_{\max}, \nu_{\min}, \nu_{\max}$ and $\epsbar_{\max}, \dot{\epsbar}_{\max}$. Combining these results together gives us that
    \begin{equation}
        \|\Psi^{\pc}(\epsbar; E, \nu) - \Psi_0(\epsbar; E, \nu)\|_{L^\infty} < \e.
    \end{equation}
    Because we proved this statement for any $E \in \cM_{E_{\min}, E_{\max}}^B$, $\nu \in \cM_{\nu_{\min}, \nu_{\max}}^B$ and $\epsbar \in \cC_{\epsbar_{\max}, \dot{\epsbar}_{\max}}$, our bound holds uniformly over this class.
\end{proof}

The fact that $(E, \nu) \in \cM_{E_{\min}, E_{\max}}^B \times \cM_{\nu_{\min}, \nu_{\max}}^B$ are functions of bounded variation $B$ is necessary to show that our approximation guarantees hold uniformly over this class of functions as shown in the proof of \cref{prop:pc_approx}. In the next section, we will show that the differential equation right-hand sides in~\cref{eqn:FGpc} are well approximated by Fourer neural mapping architectures. This relies on approximation theory results requiring that the set of material parameters $E, \nu$ entering the theorem statement is compact in $L^2(\Omega)$; this is ensured by working in a subset of $L^2(\Omega)$ in which we have bounded variation.

\begin{remark}
    The Lipschitz bounds above are uniform in length of time-interval $|\cT| = T$. This is due to the stability of the dynamics of the cell problem~\cref{eq:cell_problem}. However, approximation of the dynamics of~\cref{eq:pc_model} by our proposed FNM-RNO model from \eqref{def:FNMRNO}, which we study in the next subsection, will lead to error constants that grow with $T$. It is likely that such results can be improved, by establishing stability properties of the FNM-RNO itself, but doing so is outside the scope of the present paper. The numerical simulations shown in \cref{sec:N} confirm that our RNO models are indeed stable.
    \hspace*{\fill}$\blacklozenge$
\end{remark}

\subsection{Approximation Through Neural Operators}\label{subsec:neuralop}
In this section, we combine the results from the previous section on piecewise-constant approximation, with approximation guarantees of Fourier neural operators, to prove that the homogenized constitutive law $\Psi_0$ of a multiscale KV material from~\cref{eq:cell_problem} is well approximated by an FNM--RNO model $\Psi^{\FNMRNO}$.

Recall from \cref{def:FNMRNO} that an FNM--RNO model
\begin{gather*}
    \Psi^{\FNMRNO}: C^1(\cT; \R) \times L^2(\T; \R^2) \to C(\cT; \R),\\
    (\{\epsbar(t)\}_{t \in \cT}; E, \nu) \mapsto \{\sigmabar_{\RNO}(t)\}_{t \in \cT}
\end{gather*}
with a one-dimensional Kelvin--Voigt material microstructure $M = (E, \nu) \in L^2(\T; \R^2)$ is given by
\begin{equation}\label{eq:FNMRNO_model}
\begin{aligned}
\sigmabar_{\RNO}(t) &= F_{\FNM}(\epsbar(t), \dot{\epsbar}(t), \xi_{\RNO}(t); E, \nu)\\
\dot{\xi}_{\RNO}(t) &= G_{\FNM}(\epsbar(t), \xi_{\RNO}(t); E, \nu),\quad \xi_{\RNO}(0) = 0,
\end{aligned}
\end{equation}
where $F_{\FNM}$ and $G_{\FNM}$ are two FNMs as defined in \cref{def:FNM}.

Note that the differential equation model $\Psi^{\pc}$ \cref{eq:pc_model} is of the same form as the FNM--RNO model $\Psi^{\FNMRNO}$ \cref{eq:FNMRNO_model}, and the right-hand sides $F_{\pc}, G_{\pc}$ of this ODE model can be approximated by $F_{\FNM}, G_{\FNM}$ if designed appropriately. To show this formally, we use the universal approximation theorem for Fourier neural mappings, an important result in neural operator theory~\cite{kovachki2021universal, kovachki2023neural, huang2024operator}. We rely on the compact embedding of the space of bounded $\BV$ functions in $L^2$, used in prior works on approximation theory of FNMs for elliptic homogenzation problems~\cite[Lemma C.1]{bhattacharya2024learning}. We combine these results here to obtain the universal approximation result for our FNM architecture, proven in detail in \cref{prop:UA-hom}.

\begin{proposition}\label{prop:UA-hom_restate}
    Under \cref{assump:E_nu_eps}, choosing any $\epsbar_{\max}, \dot{\epsbar}_{\max}, \xi_{\max} > 0$ and $\e_F, \e_G > 0$, there exist FNMs $F_{\FNM}$ and $G_{\FNM}$, in 
    \cref{eqn:PsiRNO}, that approximate $F_{\pc}$ and $G_{\pc}$ of \cref{eqn:FGpc} such that 
    \begin{gather}
        \sup_{\substack{|z_1| \leq \epsbar_{\max}, |z_2| \leq \dot{\epsbar}_{\max}, \|z_3\|\leq \xi_{\max},\\ E \in \cM_{E_{\min}, E_{\max}}^B, \nu \in \cM_{\nu_{\min}, \nu_{\max}}^B}} |F_{\FNM}(z_1, z_2, z_3; E, \nu) - F_{\pc}(z_1, z_2, z_3; E, \nu)| < \e_F\\
        \sup_{\substack{|z_1| \leq \epsbar_{\max}, \|z_3\|\leq \xi_{\max},\\ E \in \cM_{E_{\min}, E_{\max}}^B, \nu \in \cM_{\nu_{\min}, \nu_{\max}}^B}} \|G_{\FNM}(z_1, z_3; E, \nu) - G_{\pc}(z_1, z_3; E, \nu)\| < \e_G.
    \end{gather}
\end{proposition}

Now we are ready to prove that for a general class of material microstructures $(E, \nu)$ and strain histories $\epsbar$, the true constitutive law $\Psi_0$ from~\cref{eq:cell_problem} can be approximated by $\Psi^{\pc}$ which can in turn be approximated by $\Psi^{\FNMRNO}$.

\begin{theorem}[FNM-RNO Universal Approximation]\label{thm:main_ua_theorem}
    For any materials $E, \nu$ and strain $\epsbar$ satisfying \cref{assump:E_nu_eps} and any $\e > 0$, there exist FNMs $F_{\FNM}$ and $G_{\FNM}$ such that the map
    \begin{align*}
        \Psi^{\FNMRNO}: \left(\{\epsbar(t)\}_{t \in \cT}; E, \nu\right) \mapsto \{\sigmabar_{\RNO}(t)\}_{t \in \cT}
    \end{align*}
    defined by \cref{eqn:PsiRNO} uniformly approximates the map
    \begin{align*}
        \Psi_0: \left(\{\epsbar(t)\}_{t\in\cT}; E, \nu\right) \mapsto \{\sigmabar(t)\}_{t \in \cT}
    \end{align*}
    defined by \cref{eq:cell_problem} to accuracy $\e$, in the following sense:
    \begin{equation}
        \sup_{\substack{E \in \cM_{E_{\min}, E_{\max}}^B\\ \nu \in \cM_{\nu_{\min}, \nu_{\max}}^B}}\sup_{\epsbar \in \cC_{\epsbar_{\max}, \dot{\epsbar}_{\max}}} \|\Psi^{\FNMRNO}(\epsbar; E, \nu) - \Psi_0(\epsbar; E, \nu)\|_{L^\infty(\cT)} < \e.
    \end{equation}
\end{theorem}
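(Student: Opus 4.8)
The plan is to route the estimate through the intermediate piecewise-constant model $\Psi^{\pc}$ of \eqref{eq:pc_model} and split
\begin{equation*}
\|\Psi^{\FNMRNO}(\epsbar; E, \nu) - \Psi_0(\epsbar; E, \nu)\|_{L^\infty(\cT)} \le \|\Psi^{\FNMRNO}(\epsbar; E, \nu) - \Psi^{\pc}(\epsbar; E, \nu)\|_{L^\infty(\cT)} + \|\Psi^{\pc}(\epsbar; E, \nu) - \Psi_0(\epsbar; E, \nu)\|_{L^\infty(\cT)}.
\end{equation*}
Theorem~\ref{thm:pc_approx}, applied with tolerance $\varepsilon/2$, fixes an internal dimension $L = L(\varepsilon)$ making the second term $<\varepsilon/2$ uniformly over $E \in \cM_{E_{\min}, E_{\max}}^B$, $\nu \in \cM_{\nu_{\min}, \nu_{\max}}^B$ and $\epsbar \in \cC_{\epsbar_{\max}, \dot{\epsbar}_{\max}}$. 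With this $L$ fixed, it remains to choose FNMs $F_{\FNM}, G_{\FNM}$ making the first term $<\varepsilon/2$, again uniformly; since \eqref{eqn:FGpc} has exactly the form of \eqref{eq:FNMRNO_model}, this reduces to approximating the maps $F_{\pc}, G_{\pc}$ by FNMs.

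First I would pin down a fixed compact set of inputs. The function inputs $(E,\nu)$ lie in $\cM_{E_{\min}, E_{\max}}^B \times \cM_{\nu_{\min}, \nu_{\max}}^B$, which is compact in $L^2(\T;\R^2)$ because bounded subsets of $\mathrm{BV}$ with bounded range embed compactly into $L^2$ (Helly selection). For the vector inputs, $|\epsbar(t)|,|\dot{\epsbar}(t)|$ are bounded by $\epsbar_{\max},\dot{\epsbar}_{\max}$, and by Theorem~\ref{thm:pc_approx} the diagonal matrix $A(E,\nu)$ has entries in $[\tfrac{E_{\min}}{\nu_{\max}},\tfrac{E_{\max}}{\nu_{\min}}]$ — the upper bound coming from the interleaving property in Lemma~\ref{lemma:pc_bound} — while $b(E,\nu)$ has entries at most $\nu_{\max}\big(\tfrac{E_{\max}}{\nu_{\min}}-\tfrac{E_{\min}}{\nu_{\max}}\big)^2$; integrating each scalar equation $\dot{\xi}_{\pc,l} = -A_{ll}\xi_{\pc,l} + b_l\epsbar$ from $\xi_{\pc}(0)=0$ bounds $|\xi_{\pc,l}(t)|$ by a constant independent of $E,\nu,\epsbar$, so $\xi_{\pc}(t)$ stays in a fixed closed ball $\overline{B}_R\subset\R^L$ for all $t\in\cT$. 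I would then work on the compact set $\mathcal{S} = \cM_{E_{\min}, E_{\max}}^B \times \cM_{\nu_{\min}, \nu_{\max}}^B \times [-\epsbar_{\max},\epsbar_{\max}] \times [-\dot{\epsbar}_{\max},\dot{\epsbar}_{\max}] \times \overline{B}_{R+1}$, where the extra unit of radius is a safety margin used below.

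Next I would observe that on $\mathcal{S}$ the maps $F_{\pc},G_{\pc}$ of \eqref{eqn:FGpc} are continuous — they are affine in the vector arguments with coefficients $E'_{\pc},\nu'_{\pc},A,b$ that depend continuously on $(E,\nu)$ by Theorems~\ref{thm:pc_cont} and~\ref{thm:pc_approx} — so the universal approximation property of Fourier Neural Mappings~\cite{huang2024operator}, extended to the mixed function/vector-input setting of Definition~\ref{def:FNM}, supplies, for any $\delta>0$, FNMs $F_{\FNM},G_{\FNM}$ with $\sup_{\mathcal{S}}\|F_{\FNM}-F_{\pc}\|_2 \le \delta$ and $\sup_{\mathcal{S}}\|G_{\FNM}-G_{\pc}\|_2 \le \delta$. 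Fixing these, letting $(\xi_{\RNO},\sigmabar_{\RNO})$ solve \eqref{eq:FNMRNO_model} and setting $e(t)=\xi_{\RNO}(t)-\xi_{\pc}(t)$, I would argue that so long as $\xi_{\RNO}(t)\in\overline{B}_{R+1}$ we can split $\dot{e}(t)$ into an FNM-approximation term (of norm $\le\delta$) and the term $-A\,e(t)$ with $\|A\|_2 \le \tfrac{E_{\max}}{\nu_{\min}}$, giving $\|\dot{e}(t)\|_2 \le \delta + \tfrac{E_{\max}}{\nu_{\min}}\|e(t)\|_2$; Gr\"onwall then yields $\|e(t)\|_2 \le \delta\,C_T$ with $C_T$ growing exponentially in $T$. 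Choosing $\delta$ with $\delta C_T \le 1$, a continuation argument upgrades this to all of $\cT$ (the bound $\|e(t)\|_2\le 1$ prevents $\xi_{\RNO}$ from reaching $\partial\overline{B}_{R+1}$). The analogous splitting for $F$, using $F_{\pc}(\cdot,\cdot,\xi_2;\cdot)-F_{\pc}(\cdot,\cdot,\xi_1;\cdot)=\ip{\mathds{1}}{\xi_2-\xi_1}$, then gives $|\sigmabar_{\RNO}(t)-\sigmabar_{\pc}(t)| \le \delta(1+\sqrt{L}\,C_T)$ for all $t\in\cT$; taking $\delta$ so small that also $\delta(1+\sqrt{L}\,C_T)<\varepsilon/2$ closes the argument, with every constant depending only on $E_{\min},E_{\max},\nu_{\min},\nu_{\max},\epsbar_{\max},\dot{\epsbar}_{\max},T,\varepsilon$ and hence uniform in $(E,\nu,\epsbar)$.

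I expect the main obstacle to be this last bootstrapping step: the FNM error bound holds only on the compact set $\mathcal{S}$, yet the FNM--RNO trajectory $\xi_{\RNO}$ is generated by the very vector field $G_{\FNM}$ that is controlled only there, so one must certify a posteriori that $\xi_{\RNO}$ never escapes. The safety-margin ball $\overline{B}_{R+1}$ together with a Gr\"onwall-plus-continuation argument handles this, at the price of an error constant $C_T$ that grows exponentially in $T$ — exactly the $T$-dependence anticipated in the remark after Theorem~\ref{thm:pc_approx}. A smaller but necessary check is that the coefficient maps $(E,\nu)\mapsto(E'_{\pc},\nu'_{\pc},A,b)$ are continuous in the $L^2$ topology on the $\cM^B$-sets (they factor through the continuous local-averaging map onto piecewise-constant parameters and then through the continuous dependence established in Theorem~\ref{thm:pc_cont}), which is what licenses the FNM universal approximation theorem.
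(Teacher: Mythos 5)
Your proposal is correct and follows the paper's high-level structure (split through $\Psi^{\pc}$ via Theorem~\ref{thm:pc_approx}, bound $\xi_{\pc}$ by Gr\"onwall using the lower bound on $A$ and upper bound on $b$, invoke FNM universal approximation, and Gr\"onwall again to control the internal-variable mismatch), but the two arguments diverge in how they control the $G$-step. The paper splits $\dot\xi_{\pc}-\dot\xi_{\RNO}$ as $\bigl[G_{\pc}-G_{\FNM}\bigr](\xi_{\pc})+\bigl[G_{\FNM}(\xi_{\pc})-G_{\FNM}(\xi_{\RNO})\bigr]$, so the FNM approximation error is only ever evaluated at $\xi_{\pc}$ (already known to stay in a fixed compact set), and the second term is handled by the \emph{global} Lipschitz constant $L_G$ of $G_{\FNM}$ established in Lemma~\ref{lem:FNO_lip}; this dispenses with any a posteriori escape argument. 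You instead split as $\bigl[G_{\FNM}-G_{\pc}\bigr](\xi_{\RNO})+\bigl[G_{\pc}(\xi_{\RNO})-G_{\pc}(\xi_{\pc})\bigr]$, using the linear structure $G_{\pc}(\epsbar,\xi)=-A\xi+b\epsbar$ for the second term; because the approximation error is then evaluated at $\xi_{\RNO}$, you must certify a posteriori that $\xi_{\RNO}$ remains inside the compact set where the approximation is valid, which is precisely the safety-margin-plus-continuation argument you supply. Your route buys something concrete: your Gr\"onwall constant $C_T$ depends only on $\|A\|_2\le E_{\max}/\nu_{\min}$ and $T$, hence only on problem data, whereas the paper's constant $e^{L_G T}/L_G$ involves the Lipschitz constant of the approximating network itself; the paper's argument therefore implicitly needs $L_G$ not to deteriorate as $\varepsilon_G\to 0$, whereas your decomposition sidesteps that issue entirely at the price of the bootstrap. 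Both are correct; the paper's is shorter, yours is arguably tighter in its quantifier bookkeeping.
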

\begin{proof}
    Assume throughout this proof that $E \in \cM_{E_{\min}, E_{\max}}^B$, $\nu \in \cM_{\nu_{\min}, \nu_{\max}}^B$ and $\epsbar \in \cC_{\epsbar_{\max}, \dot{\epsbar}_{\max}}$. The bounds derived below hold uniformly over all functions in these classes. Denote the true strain-to-stress map defined by the cell problem~\cref{eq:cell_problem} as $\Psi_0: (\{\epsbar(t)\}_{t\in\cT}; E, \nu) \mapsto \{\sigmabar(t)\}_{t \in \cT}$. We know by \cref{prop:pc_approx} that
    \begin{equation}\label{eq:psi_pc_dagger_diff}
        \|\Psi^{\pc}(\epsbar; E, \nu) - \Psi_0(\epsbar; E, \nu)\|_{L^\infty} = \|\sigmabar_{\pc} - \sigmabar\|_{L^\infty} < \frac{\e}{2}
    \end{equation}
    as long as the dimension $n = n(\e/2)$ in~\cref{eq:pc_model} is taken sufficiently large.
    
    Now we study the error between $\Psi^{\FNMRNO}$ and $\Psi^{\pc}$. To do this, we first need to show that the trajectories of the hidden variables $\xi_{\pc}$ in~\cref{eq:pc_model} and $\xi_{\RNO}$ in~\cref{eq:FNMRNO_model} stay in a bounded domain so that we can apply well-established FNM universal approximation results. First studying $\xi_{\pc}$ note that
    \begin{align*}
        \dot{\xi}_{\pc}(t) \leq - A(E, \nu)\xi_{\pc}(t) + b(E, \nu)\epsbar_{\max}.
    \end{align*}
    Since $A$ is a diagonal matrix with strictly positive entries, using~\cref{eq:Ab_bound} we can apply Gronwall's inequality to each entry of $\xi_{\pc}$, using that $\xi_{\pc}(0)=0$, to write
    \begin{align*}
        \xi_{\pc}(t) \leq A(E, \nu)^{-1}b(E, \nu)\epsbar_{\max} \leq \epsbar_{\max}\frac{\nu_{\max}^2}{E_{\min}}\Big(\frac{E_{\max}}{\nu_{\min}} - \frac{E_{\min}}{\nu_{\max}}\Big)^2
    \end{align*}
    where the bound above is interpreted element-wise. In the preceding inequality, we have used the lower and upper bounds, derived in \cref{prop:pc_approx}, on $A$ and $b$ respectively. We can derive the same bound for $-\xi_{\pc}$ through a similar application of Gronwall's inequality which proves that
    \begin{equation}\label{eq:xi_pc_bound}
        \sup_{t \in \cT}\|\xi_{\pc}(t)\| \leq \sqrt{L}\epsbar_{\max}\frac{\nu_{\max}^2}{E_{\min}}\Big(\frac{E_{\max}}{\nu_{\min}} - \frac{E_{\min}}{\nu_{\max}}\Big)^2.
    \end{equation}
    The next step is to bound the difference between $\xi_{\RNO}$ and $\xi_{\pc}$. Because the trajectory of $\xi_{\pc}$ is bounded, by FNM universal approximation result~\cref{prop:UA-hom_restate}, there exists a FNM $G_{\FNM}$ such that
    \begin{align*}
        \|G_{\pc}(\epsbar, \xi_{\pc}; E, \nu) - G_{\FNM}(\epsbar, \xi_{\pc}; E, \nu)\|_2 < \e_G
    \end{align*}
    for any small $\e_G > 0$. 
    Next, we apply the triangle inequality
    \begin{align*}
        \|\dot{\xi}_{\pc} - \dot{\xi}_{\RNO}\|_2 &\leq \|G_{\pc}(\epsbar, \xi_{\pc}; E, \nu) - G_{\FNM}(\epsbar, \xi_{\pc}; E, \nu)\|_2\\
        &\quad\quad\quad +  \|G_{\FNM}(\epsbar, \xi_{\pc}; E, \nu)-G_{\FNM}(\epsbar, \xi_{\RNO}; E, \nu)\|_2\\
        &< \e_{G} + L_G\|\xi_{\pc} - \xi_{\RNO}\|_2
    \end{align*}
    where $L_G$ is the Lipschitz constant of the Fourier neural mapping $G_{\FNM}$ in the variable $\xi$. We explicitly derive the form of this Lipschitz constant in \cref{lem:FNO_lip}. Now note that
    \begin{equation}
        \frac{\d}{\dt}\|\xi_{\pc} - \xi_{\RNO}\|_2 = \Big\langle\dot{\xi}_{\pc} - \dot{\xi}_{\RNO}, \frac{\xi_{\pc} - \xi_{\RNO}}{\|\xi_{\pc} - \xi_{\RNO}\|_2}\Big\rangle_2 \leq \|\dot{\xi}_{\pc} - \dot{\xi}_{\RNO}\|_2
    \end{equation}
    and hence, we have that
    \begin{align*}
        \frac{\d}{\dt}\|\xi_{\pc} - \xi_{\RNO}\|_2 < \e_{G} + L_G\|\xi_{\pc} - \xi_{\RNO}\|_2.
    \end{align*}
    By Gronwall's inequality, we get that
    \begin{equation}\label{eq:xi_pc_RNO_diff}
        \|\xi_{\pc}(t) - \xi_{\RNO}(t)\|_2 < \frac{\e_G}{L_G}e^{L_Gt} \leq \frac{\e_G}{L_G}e^{L_GT}
    \end{equation}
    assuming that $t \in \cT = [0, T]$. Finally, combining~\cref{eq:xi_pc_bound} with~\cref{eq:xi_pc_RNO_diff} we get that
    \begin{equation}
        \sup_{t \in \cT}\|\xi_{\pc}(t)\|_2, \sup_{t \in \cT}\|\xi_{\RNO}(t)\|_2 < \frac{\e_G}{L_G}e^{L_GT} + \sqrt{L}\epsbar_{\max}\frac{\nu_{\max}^2}{E_{\min}}\Big(\frac{E_{\max}}{\nu_{\min}} - \frac{E_{\min}}{\nu_{\max}}\Big)^2.
    \end{equation}
    Lastly, again invoking FNM universal approximation \cref{prop:UA-hom_restate} there exists an FNM $F_{\FNM}$ such that 
    \begin{align*}
        \|F_{\FNM}(\epsbar, \dot{\epsbar}, \xi_{\RNO}; E, \nu) - F_{\pc}(\epsbar, \dot{\epsbar}, \xi_{\RNO}; E, \nu)\|_2 < \e_F
    \end{align*}
    for any small $\e_F > 0$. Because $F_{\pc}$ is linear in $\xi$, we further have by Cauchy-Schwarz that
    \begin{align*}
        \|F_{\pc}(\epsbar, \dot{\epsbar}, \xi_{\RNO}; E, \nu) - F_{\pc}(\epsbar, \dot{\epsbar}, \xi_{\pc}; E, \nu)\|_2 &= \|\ip{\mathds{1}}{\xi_{\RNO} - \xi_{\pc}}\|_2 \\
        &\leq \sqrt{L}\|\xi_{\RNO} - \xi_{\pc}\|_2 \leq \sqrt{L}\frac{\e_G}{L_G}e^{L_GT}
    \end{align*}
    where in the last inequality we used the bound derived in~\cref{eq:xi_pc_RNO_diff}. By the triangle inequality, we can write
    \begin{align*}
        |\sigmabar_{\RNO}&(t) - \sigmabar_{\pc}(t)|\\
        &\leq \|F_{\FNM}(\epsbar, \dot{\epsbar}, \xi_{\RNO}; E, \nu) - F_{\pc}(\epsbar, \dot{\epsbar}, \xi_{\RNO}; E, \nu)\|_2 \\
        &\qquad+ \|F_{\pc}(\epsbar, \dot{\epsbar}, \xi_{\RNO}; E, \nu) - F_{\pc}(\epsbar, \dot{\epsbar}, \xi_{\pc}; E, \nu)\|_2\\
        &\leq \e_F + \sqrt{L}\frac{\e_G}{L_G}e^{L_GT}.
    \end{align*}
    which proves the bound
    \begin{equation}\label{eq:psi_RNO_pc_diff}
        \|\Psi^{\FNMRNO}(\epsbar; E, \nu) - \Psi^{\pc}(\epsbar; E, \nu)\|_{L^\infty} = \|\sigmabar_{\RNO} - \sigmabar_{\pc}\|_{L^\infty} < \frac{\e}{2}
    \end{equation}
    by choosing $\e_F, \e_G$ sufficiently small. Finally, by combining~\cref{eq:psi_pc_dagger_diff} and~\cref{eq:psi_RNO_pc_diff} through a triangle inequality we get the desired bound
    \begin{equation}
        \|\Psi^{\FNMRNO}(\epsbar; E, \nu) - \Psi_0(\epsbar; E, \nu)\|_{L^\infty} < \e.
    \end{equation}
\end{proof}
This proves the main theoretical result of our paper, namely that the homogenized constitutive law of the one-dimensional Kelvin--Voigt model can be approximated by an FNM--RNO architecture uniformly over a large class of material microstructures and strain inputs. The theorem justifies the consideration of the FNM--RNO more generally, beyond the specifics of linear Kelvin--Voigt viscoelasticity, an avenue we pursue further in the next section on numerical experiments.

\section{Numerical Experiments}\label{sec:N}
In this section, we apply our proposed recurrent neural operator architecture to learn
and deploy homogenized constitutive laws of viscoelastic and elasto-viscoplastic materials. We initially consider linear viscoelasticity with piecewise-constant microstructures, with varying numbers of pieces, and then design and study high-memory continuous microstructures. We first discuss, in \cref{subsec:data_set}, our data generation procedure for sampling these microstructures and for our choice of strain trajectories used to force the cell problem at the boundary. In \cref{subsec:architecture}, we give further details of our FNM--RNO architecture and model training. We then demonstrate that the ability of our neural operator to encode memory in the strain-to-stress relationship allows us to improve significantly over memoryless models. Our numerical results are shown in \cref{subsec:numerical}, where our architecture is tested on the multiscale Kelvin--Voigt cell problem and is then used within homogenized macroscale simulation. In \cref{subsec:plastic}, we show that the same model can be used to learn the constitutive law of elasto-viscoplastic materials. Taken together, the experiments demonstrate that our approach applies to different constitutive models and generalizes across a wide array of material microstructures and strain inputs.

\subsection{Dataset Generation}\label{subsec:data_set}
The dataset for our FNM--RNO architecture consists of $n_s$ material microstructures, averaged strain trajectories, and averaged stress trajectories $\{E^{(j)}, \nu^{(j)}, \epsbar^{(j)}, \sigmabar^{(j)}\}_{j=1}^{n_s}$. We consider two different ways of producing joint samples of $E$ and $\nu$ that lead to piecewise-constant (PC) random materials and high-memory continuous (HMC) random materials. The strain trajectories $\epsbar^{(j)}$ are independently sampled following the procedure in~\cite{bhattacharya2023learning, liu2023learning, zhang2024iterated} which is detailed below. Given these samples, we solve the cell problem in \cref{eq:cell_problem} for the averaged stress $\sigmabar^{(j)}$ on a uniform grid using linear Lagrange finite elements with $501$ spatial degrees of freedom (DoFs) and a 4th order explicit Runge--Kutta method with $5,001$ temporal DoFs.

We first detail the sampling procedures for piecewise-constant and high-memory continuous microstructures and then describe the construction of the average strains; taken together, these define the data sets used later for training and testing. Further
testing of generalization with respect to strain trajectories is implicit in the macroscale calculations, also presented later, since these generate strains that are not in our training set.~\\

\paragraph{\bf Piecewise-Constant Random Materials} The piecewise constant random functions ($E$, $\nu$) are generated to be spatially periodic and share the same set of jump discontinuities. The number of constant pieces $L$ is selected uniformly at random from $5$--$20$. Locations of the discontinuities are drawn at random from the finite set $\{0.02k\}_{k=0}^{50}$, with equal probability and with replacement; this leads to a minimum length of $0.02$ for each piece. The values of $E$ and $\nu$ in each piece are sampled from a uniform distribution on $[0.1, 1]^2$.~\\

\paragraph{\bf High-Memory Continuous Random Materials} We construct High-Memory Continuous (HMC) materials ($E$, $\nu$) to again be spatially-periodic. They are designed by taking samples from a periodic random mean shift $m \in C_{\text{per}}(\Omega;\R^2)$ and periodic centered Gaussian random function $g \in C_{\text{per}}(\Omega;\R^2)$,
\begin{equation}
    \begin{bmatrix}E(y)\\ \nu(y)\end{bmatrix} = 0.45\times \Big(\text{erf}\Big(m(y) + g(y)\Big) + 1\Big) + 0.1,
\end{equation}
where $\text{erf}$ is the error function that smoothly enforces $E$, $\nu$ to be bounded in $[0.1, 1]$.

We design $m: \Omega \to \R^2$ using a piecewise-constant material that has large contrasts between the magnitude of its first and second coordinates, hence resulting in a large contrast between the elasticity $E$ and viscosity $\nu$. The choice of this mean function typically corresponds to viscoelastic materials with large memory kernels, as it leads to large exponential weights $\beta_l$, as given 
by~\cref{eq:beta_l}, of the memory kernel derived in \cref{lemma:pc_cont}. Specifically, $m$ is generated by sampling its two piecewise-constant pieces from a Gaussian mixture distribution with two equally-weighted modes centered at $[-1, 1]$ and $[1, -1]$ with a small covariance $0.06 I$. Lastly, $m$ is turned into a continuous function by applying a spatial Gaussian convolution with a standard deviation of $0.01$ to each component.

The random perturbation $g: \Omega \to \R^2$ is sampled from a centered Gaussian distribution with a diagonal covariance matrix with entries $\rho^{(k)}{\sigma^{(k)}}^2(1 - {\rho^{(k)}}^2\partial_x^2)^{-2}$, where $k = 1, 2$ indicates the $E$ or $\nu$ component and $\rho^{(k)}$ and $\sigma^{(k)}$ represent correlation length and pointwise standard deviation. The statistics $\rho^{(k)}, \sigma^{(k)}$ are sampled i.i.d. from a reciprocal distribution on $[0.01, 0.3]$ and a uniform distribution on $[0.1, 0.3]$ respectively.~\\

\paragraph{\bf Averaged Strain Trajectories} The averaged strain trajectories $\epsbar$ are generated by first randomly picking a total number of time points $3\leq n\leq 21$ with $0=t_1<\dots<t_{n}=1$, where the internal time points $\{t_k\}_{k=2}^{n-1}$ are uniformly randomly placed in $[0,1]$. At each time point, we assign its averaged strain value $\epsbar(t_{k})$ by first randomly picking a sign $v_k\in\{-1, 1\}$ and then taking
\begin{equation}
    \epsbar(t_{k}) = \epsbar(t_{k-1}) + (0.5-\epsbar(t_{k-1}))v_k\sqrt{t_{k}-t_{k-1}}
\end{equation}
where we initialize $\epsbar(0) = 0$. We use piecewise-cubic Hermite interpolating polynomials (PCHIP) to create the averaged strain trajectories from these points.

Samples of the piecewise-constant (PC) material dataset and the high-memory continuous (HMC) material dataset are provided in \cref{fig:data_set_visual}. We visualize the averaged stress response with and without memory effects. The stress response without the memory effects is given by \cref{eqn:kernelform} with $K\equiv 0$. In \cref{fig:data_set_visual}, the stress response without memory for PC sample \#2 and HMC sample \#3 show large discrepancies in comparison to the stress response with memory, demonstrating the importance of modeling memory effects for those material and strain trajectory inputs.
\begin{figure}[htbp]
    \centering
    \scalebox{0.84}{
    \addtolength{\tabcolsep}{-6pt}
    \begin{tabular}{E G G}
        & \makecell{\bf Piecewise-constant material\\ \bf dataset (PC)} & \makecell{\bf High-memory continuous material\\\bf dataset (HMC)} \\
        \rotatebox{90}{\bf Sample \#1} & \includegraphics[width=\linewidth]{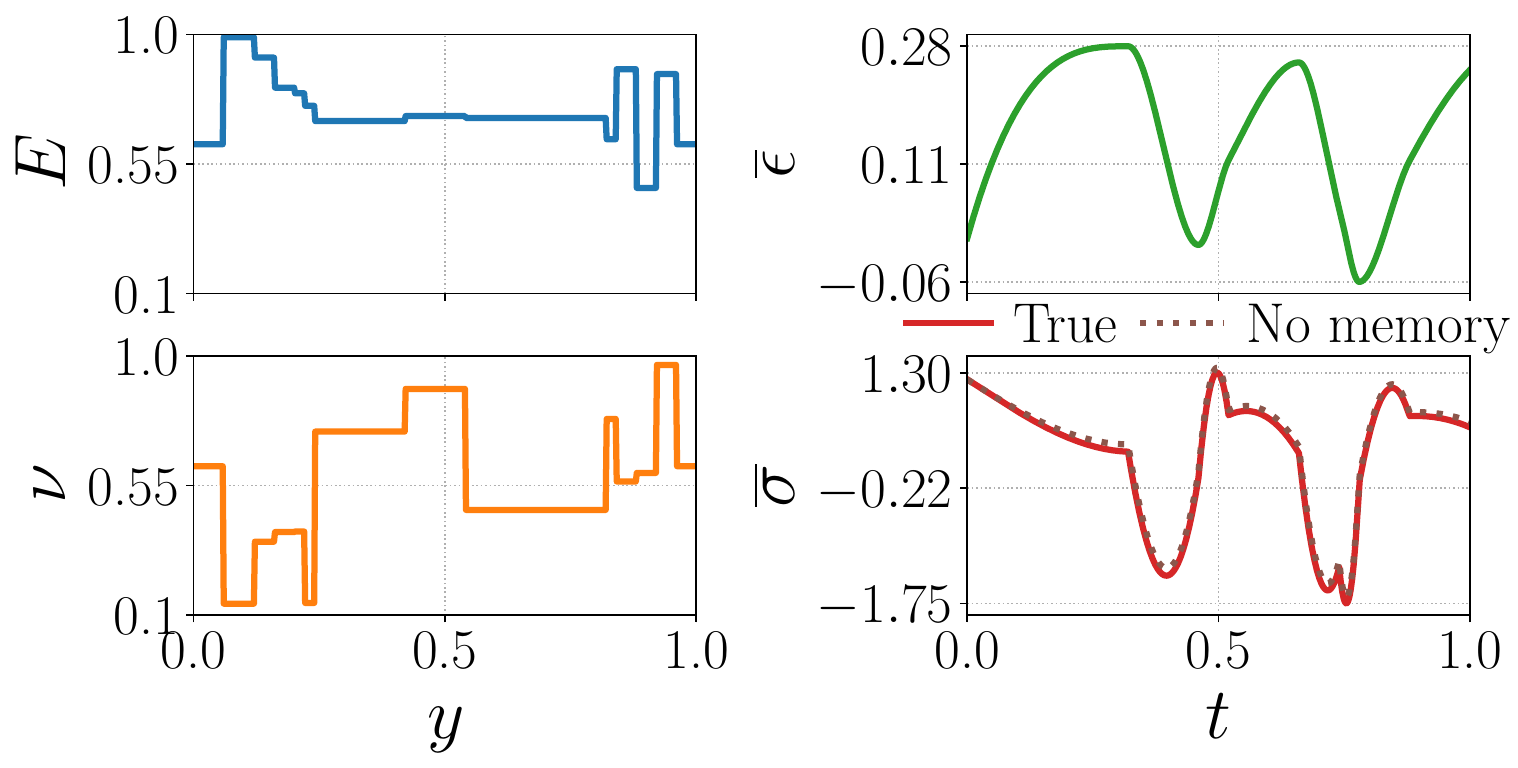}  & \includegraphics[width=\linewidth]{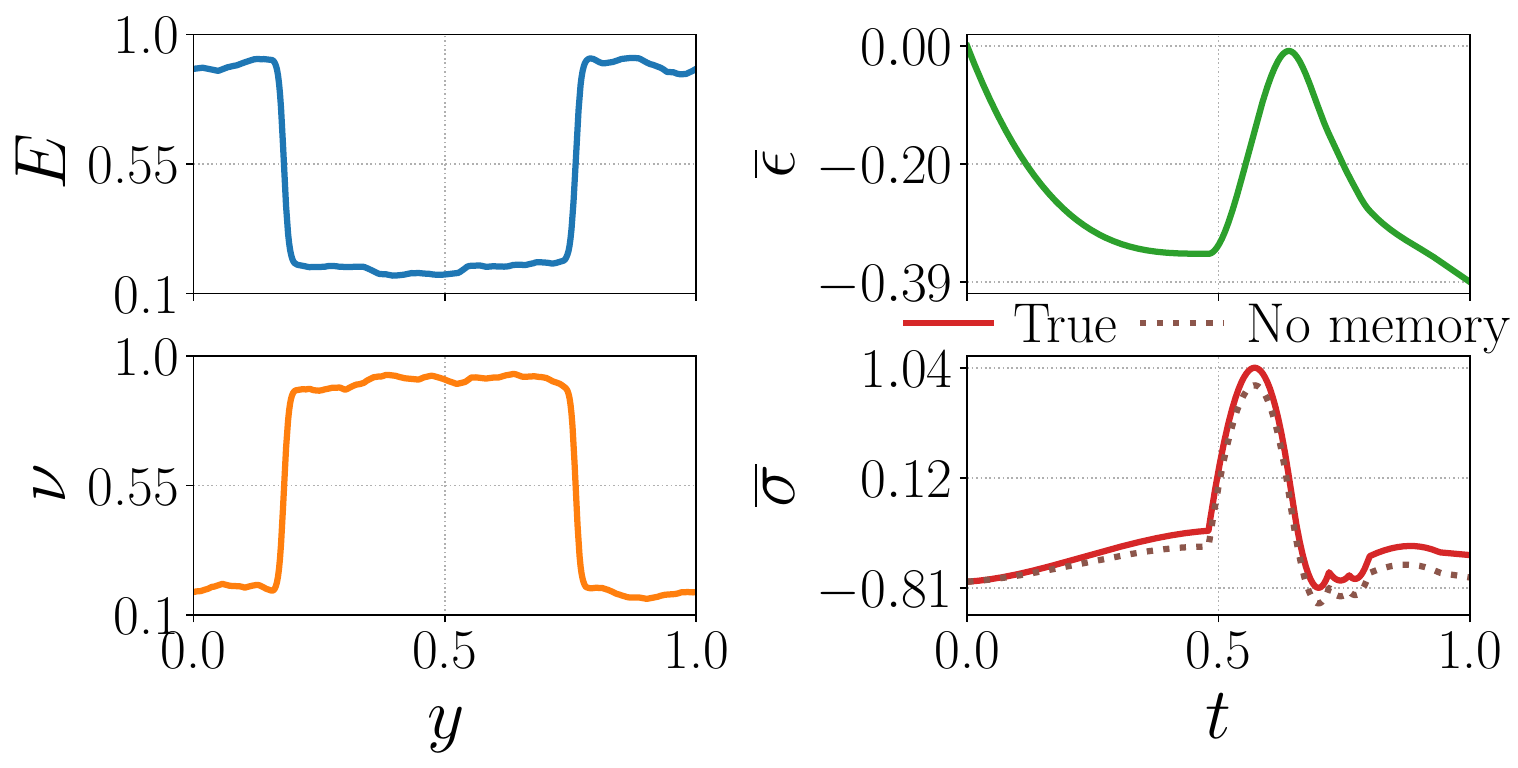} \\
        \rotatebox{90}{\bf Sample \#2}& \includegraphics[width=\linewidth]{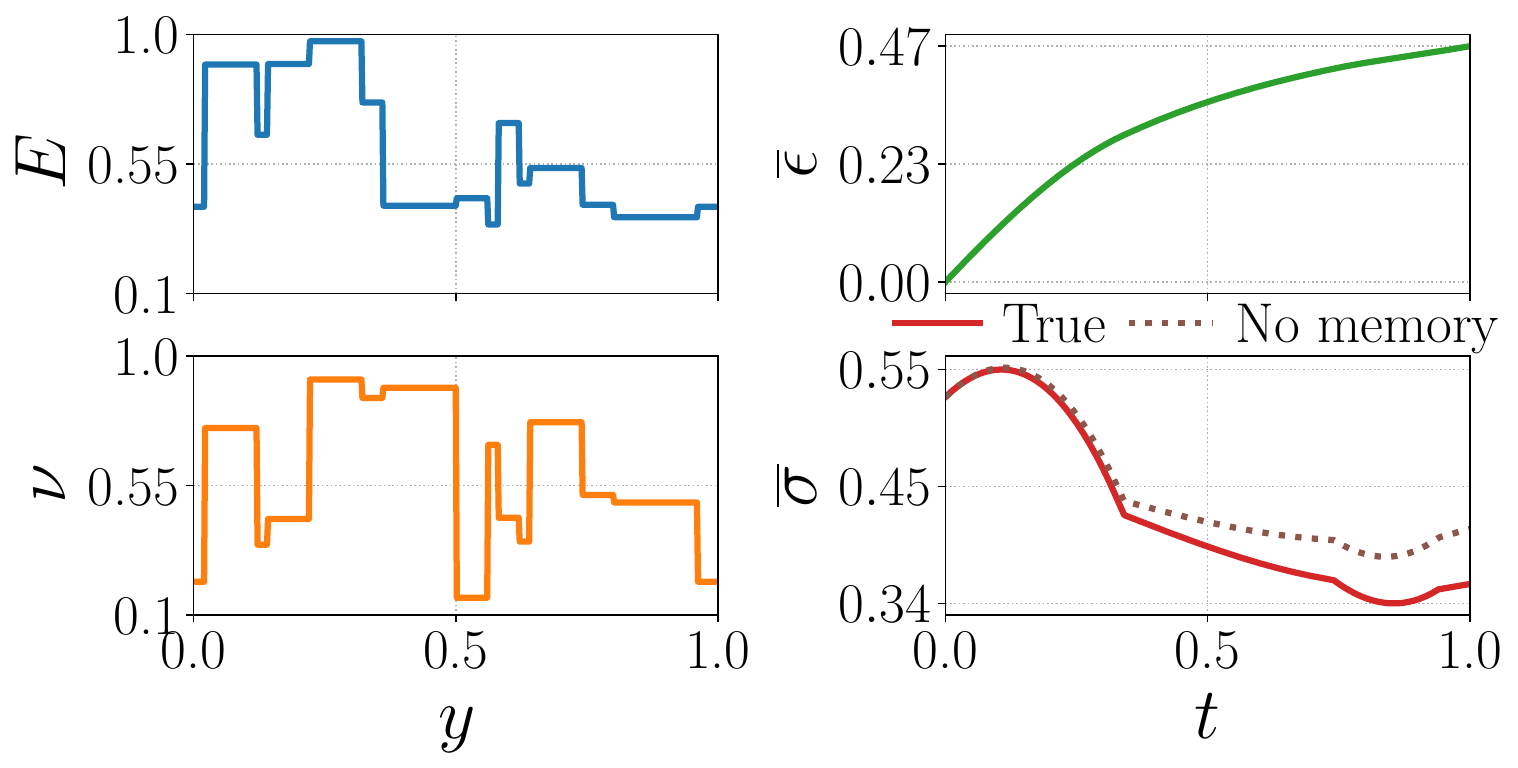}  & \includegraphics[width=\linewidth]{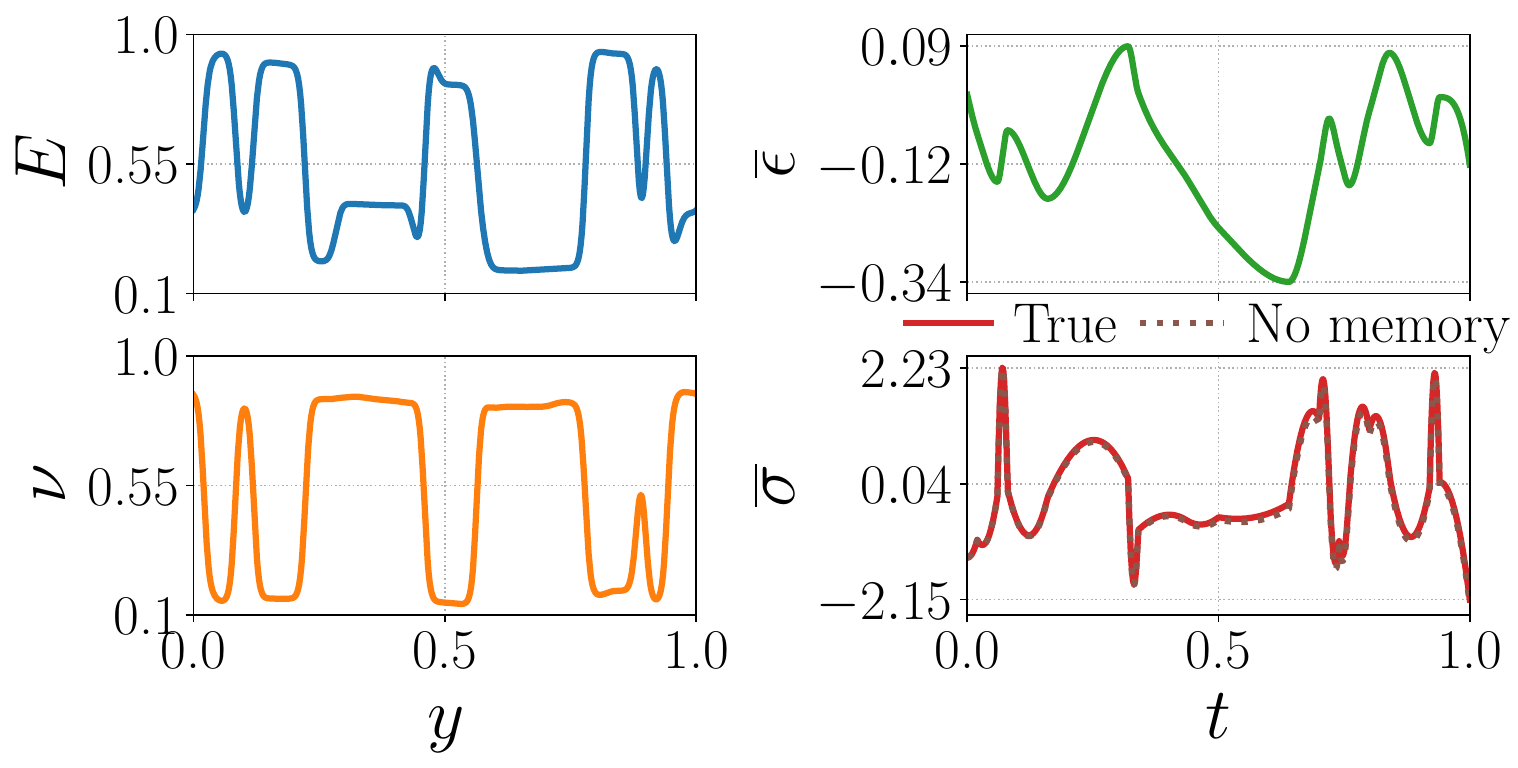} \\
        \rotatebox{90}{\bf Sample \#3} & \includegraphics[width=\linewidth]{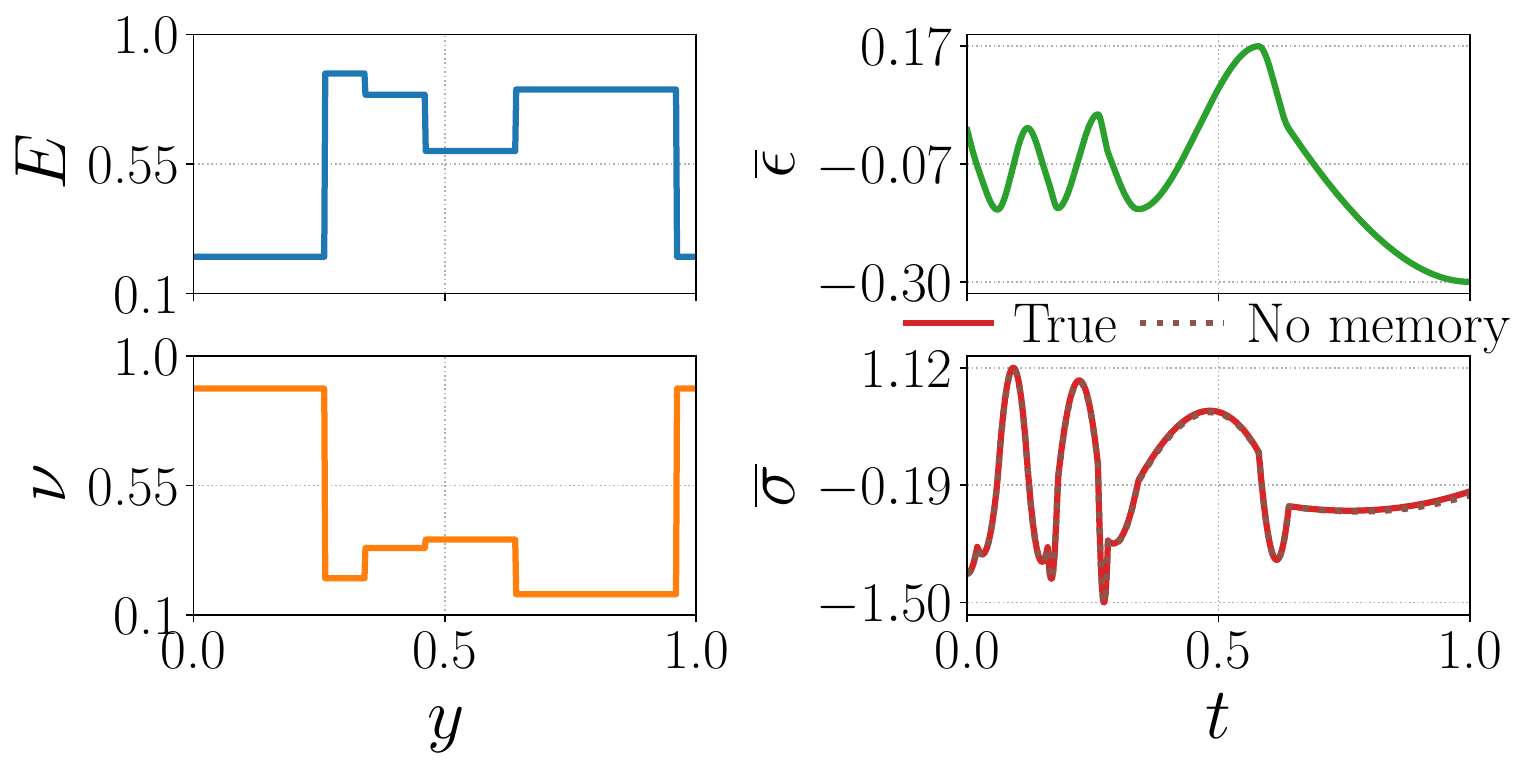}  & \includegraphics[width=\linewidth]{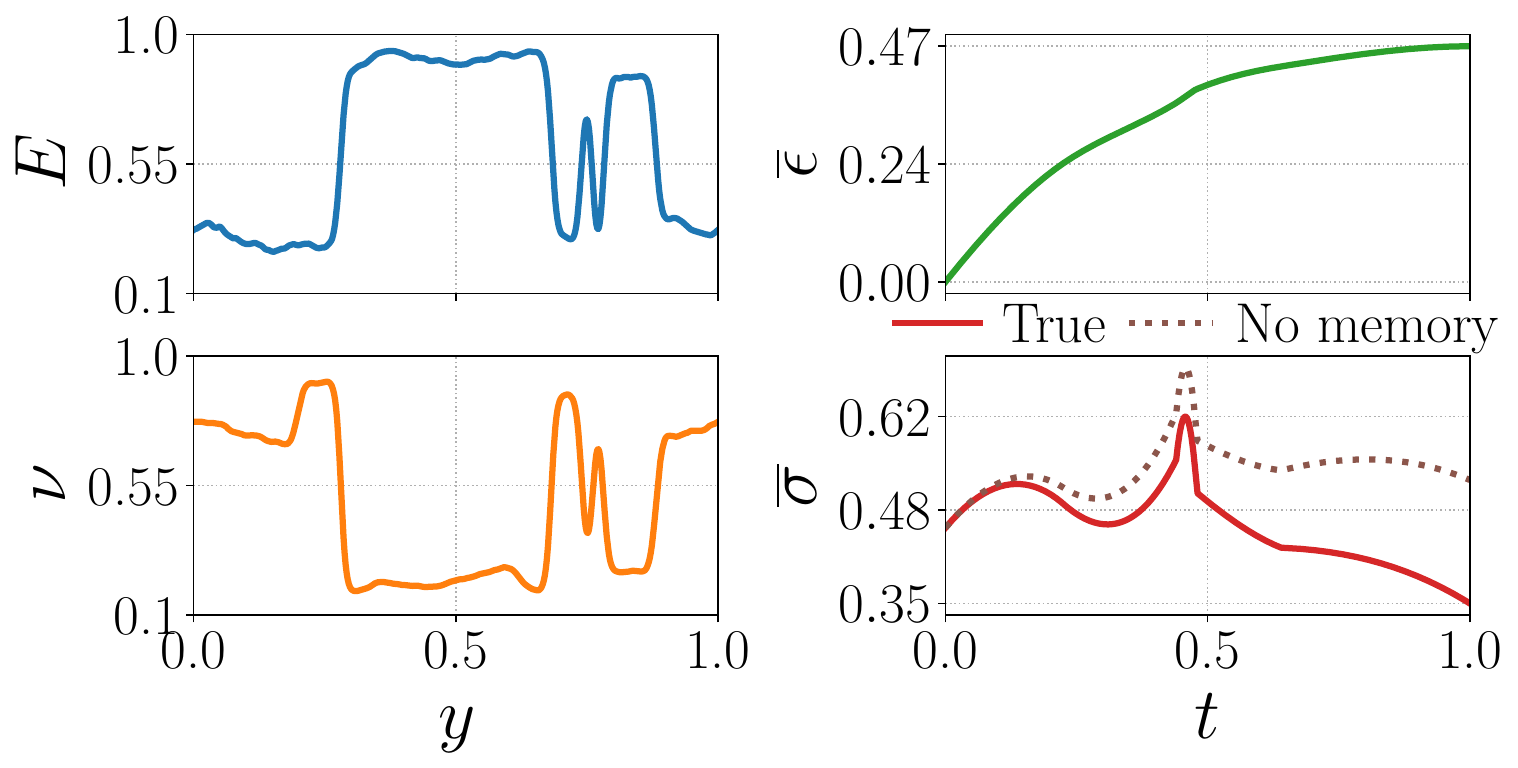} 
    \end{tabular}
    \addtolength{\tabcolsep}{6pt}
    }
    \caption{Visualization of samples from the two datasets: piecewise-constant material (PC) and high-memory continuous material (HMC); see \cref{subsec:data_set}. Each dataset consists of material samples $(E^{(j)}, \nu^{(j)})$, averaged strain trajectory samples $\epsbar^{(j)}$, and the averaged stress trajectory samples $\sigmabar^{(j)}$. We visualize the averaged stress response with (\textit{solid lines}) and without (\textit{dotted lines}) memory effects.}
    \label{fig:data_set_visual}
\end{figure}

\subsection{Architecture and Training}\label{subsec:architecture}
We now describe the architecture of the FNM--RNO from \cref{subsec:neuralop} introduced in \cref{def:FNMRNO} which is used in the following experiments. The internal variable size $L$ of our RNO differential equation is varied through all values in the set $\{1, 3, 5, 10, 15, 20, 25\}$. We simplify the architecture by letting our vector-to-function map $\sD$ and the vector lifting function $S_v$ be the identity. We use spatial coordinates as an additional channel in our function input besides the material microstructures $(E, \nu)$, a form of positional encoding. Hence, the number of functional inputs into our architecture is $d_{\inn}^f = 3$. The function lifting layer leads to hidden channels of size $d_0=32$. We use 3 Fourier layers with the same hidden channel size $d_t=32$ for $t=0,1,2.$ Each layer uses Gaussian error linear unit activations and 4 Fourier modes to parameterize all the convolution operators. The output dimension of the functional layers is $d_{\proj}^{fv} = 64$. 

We consider the loss function given by a squared relative $L^2$ error with a penalty term. Let $\sigmabar^{(j)}_{\text{RNO}}$ denote the FNM--RNO prediction of the averaged stress trajectories for the data sample $E^{(j)}, \nu^{(j)}, \epsbar^{(j)}$ and $\sigmabar^{(j)}$. The prediction depends on the learnable parameters of $F_{\FNM}$ and $G_{\FNM}$, and these parameters are found by minimizing the following loss function:
\begin{equation}\label{eq:loss_function}
\begin{aligned}
    \text{Loss}(\{\sigmabar^{(j)}\}_{j=1}^{n_\text{train}}, \{\sigmabar^{(j)}_{\text{RNO}}\}_{j=1}^{n_\text{train}}) &= \frac{1}{n_{\text{train}}}\sum_{j=1}^{n_\text{train}}\Big(
    \mathcal{E}(\sigmabar^{(j)}, \sigmabar^{(j)}_{\text{RNO}})^2 \\
    &\qquad+ \| G_{\text{FNM}}(0,0;E^{(j)},\nu^{(j)})\|^2\Big),
\end{aligned}
\end{equation}
where $\mathcal{E}$ returns the relative $L^2$ error in the average stress
\begin{equation}
     \mathcal{E}(\sigmabar^{\dagger}, \sigmabar) = \Big(\frac{\int_{0}^{1}\left|\sigmabar^{\dagger}(t)- \sigmabar(t) \right|^2dt}{\int_{0}^ {1}\left|\sigmabar^{\dagger}(t) \right|^2dt}\Big)^{1/2}.
\end{equation}
The penalty term is included in the loss function because we find that the physical constraint $G_{\text{FNM}}(0,0;E^{(j)},\nu^{(j)}) = 0$ is usually not learned without the penalty, which leads to a large error in averaged stress predictions when the material has no deformation history; see \cref{appsec:penalty}. We use $n_{\text{train}}=2,049$ samples from the PC dataset to train 7 FNM--RNOs with varying numbers of the internal variables $L$, with resolutions of the training data reduced to $251$ spatial and $501$ temporal DoFs. The PC dataset is used for training as it leads to better model generalization. We use the HMC dataset for testing only. To evaluate the loss function, we use the forward Euler scheme to estimate $\sigma^{(j)}_{\text{RNO}}$ and the trapezoidal rule to estimate the temporal integration. We use the Adam optimizer with a learning rate of $10^{-3}$, a batch size of 32, total epochs of $500$, and cosine annealing of the learning rate that tends towards $10^{-5}$.

\subsection{Numerical Results}\label{subsec:numerical}

\paragraph{\bf High Generalization Accuracy} We evaluate the trained FNM--RNOs on 2,500 testing samples from both the PC and HMC datasets. In \cref{fig:testing_error_distribution}, we present the distributions of the relative $L^2$ error for the FNM--RNO stress response alongside the error for the linear stress response excluding memory effects. Additionally, we visualize in \cref{fig:testing_visual} the testing samples with the largest and median errors for the FNM--RNO using five internal variables.

The FNM--RNOs achieve consistently low relative $L^2$ testing errors on both datasets, with mean errors of 0.7\%--1.2\% for the PC dataset and 0.9\%--1.9\% for the HMC dataset. On average, the FNM--RNO stress response considerably outperforms the linear stress response without memory effects, which exhibits mean errors of 4.5\% and 7.3\% for the PC and HMC datasets, respectively. For testing samples with large FNM--RNO stress response errors, the stress response without memory typically shows a much greater discrepancy from the true stress response in comparison; see, e.g., \cref{fig:testing_visual}.

Furthermore, the generalization accuracy of FNM--RNO improves with the inclusion of more than one internal variable; however, there is no significant enhancement when including more than three internal variables. We note that the analytical form of the RNO for the PC data set requires at least 20 internal variables due to \cref{thm:pc_restate}, which is not discovered using the FNM--RNO architecture, likely because the training is harder with more internal variables.~\\

\begin{figure}[htb]
    \centering
    \scalebox{0.9}{
    \begin{tabular}[t]{c c}
        \hspace{0.09\linewidth}\makecell{\bf Relative $L^2$ testing error\\\bf
        on the PC dataset} & \hspace{0.09\linewidth}\makecell{\bf Relative $L^2$ testing error\\\bf
        on the HMC dataset}\\
        \includegraphics[width=0.5\linewidth]{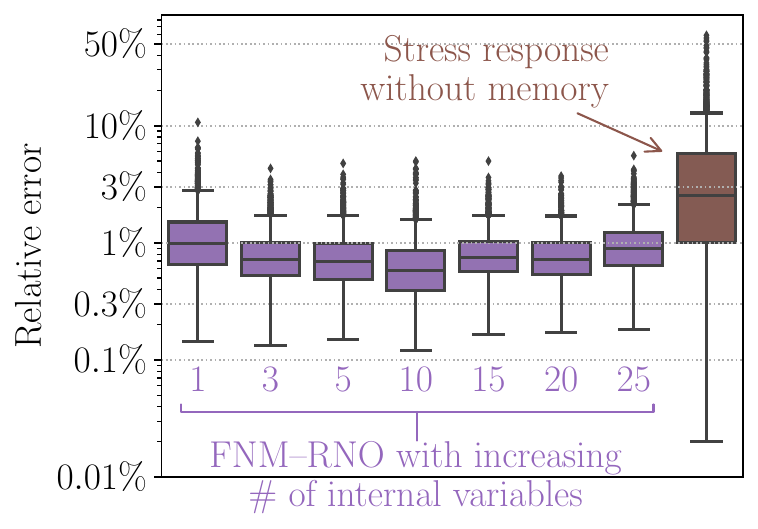} & \includegraphics[width=0.5\linewidth]{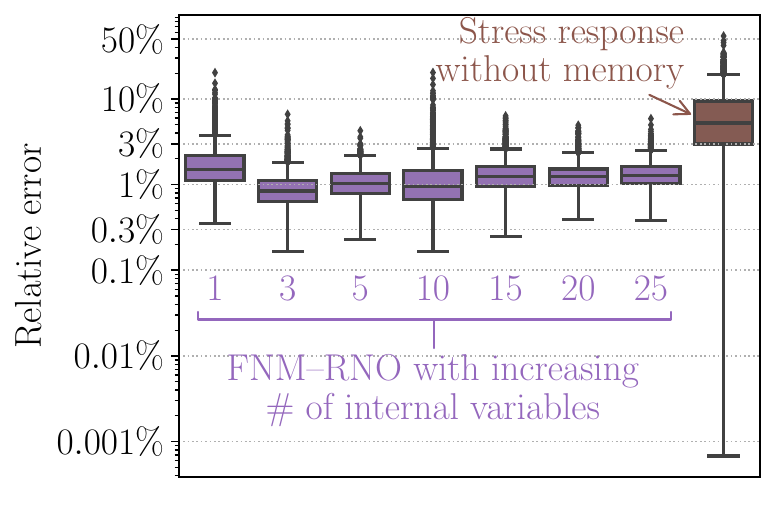}
    \end{tabular}
    }
    \caption{The distributions of the relative $L^2$ error on 2,500 testing samples from the PC dataset (\textit{left}) and the HMC dataset (\textit{right}). We visualize the errors in FNM--RNOs predictions where the trained FNM--RNOs have a varying number of internal variables. We also visualize the distribution of error given by the linear stress response without memory effects, where the response function is obtained using \cref{eqn:kernelform} with $K\equiv0$.} 
    \label{fig:testing_error_distribution}
\end{figure}

\begin{figure}[htbp]
    \centering
    \scalebox{0.85}{
    \addtolength{\tabcolsep}{-6pt}
    \begin{tabular}{E F F F F}
    & \multicolumn{2}{c}{\makecell{\bf Piecewise-constant material\\ \bf dataset (PC)}} & \multicolumn{2}{c}{\makecell{\bf High-memory continuous material\\\bf dataset (HMC)}} \\
     \multirow{2}{*}[2em]{\rotatebox{90}{\makecell{\bf Largest testing error}}}   & \includegraphics[width=\linewidth]{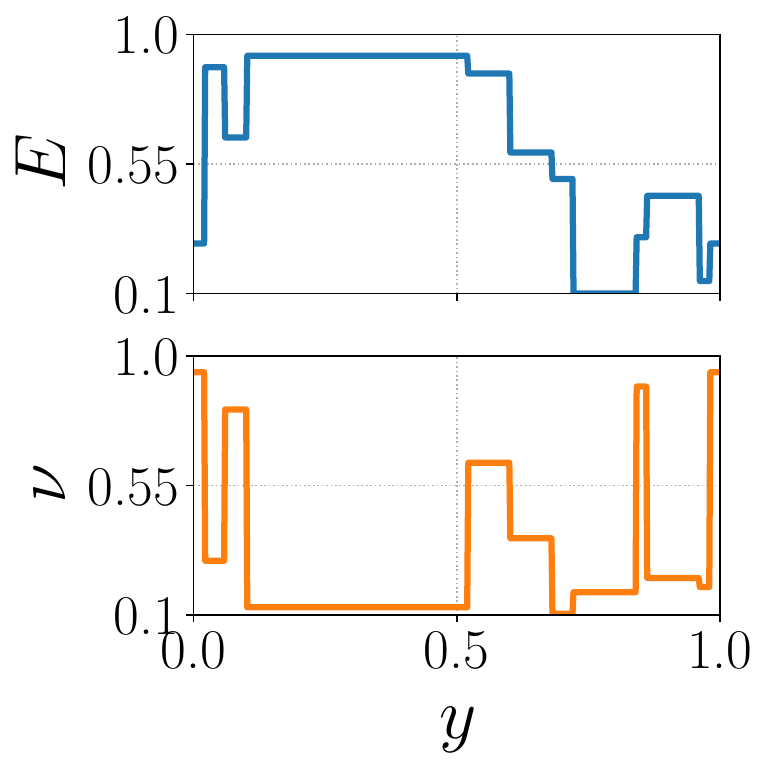} & \multirow{2}{*}[4.5em]{\includegraphics[width=\linewidth]{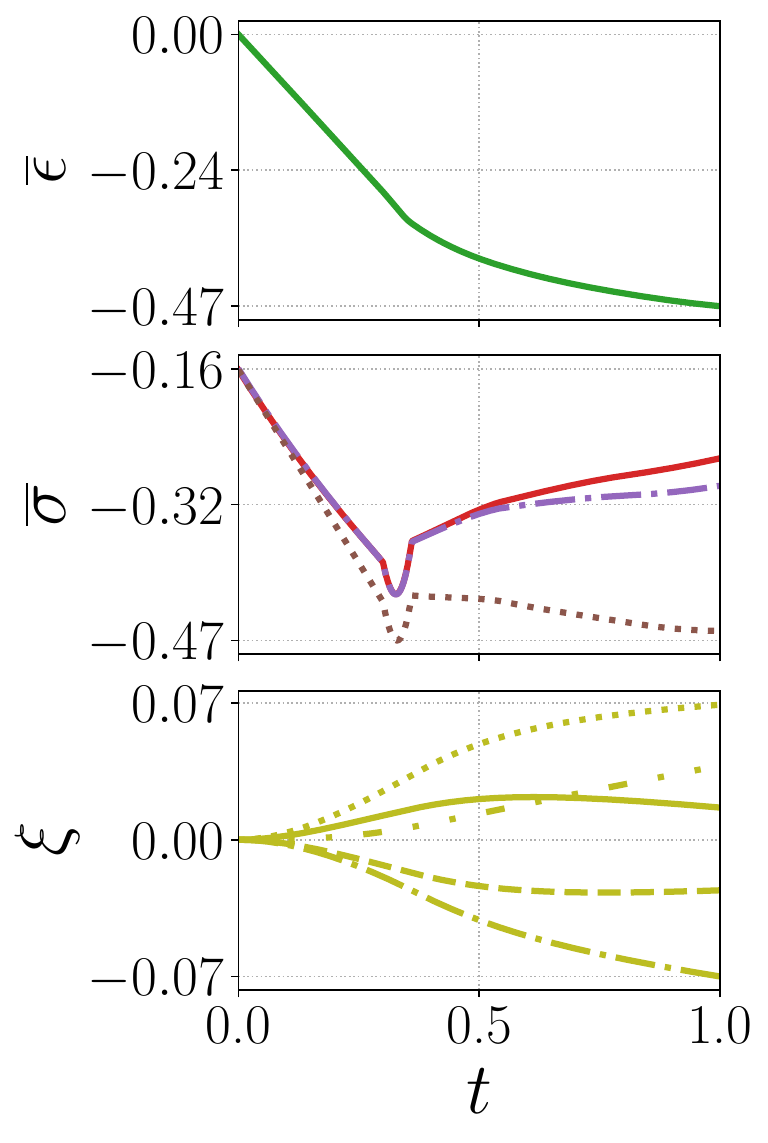}} & \includegraphics[width=\linewidth]{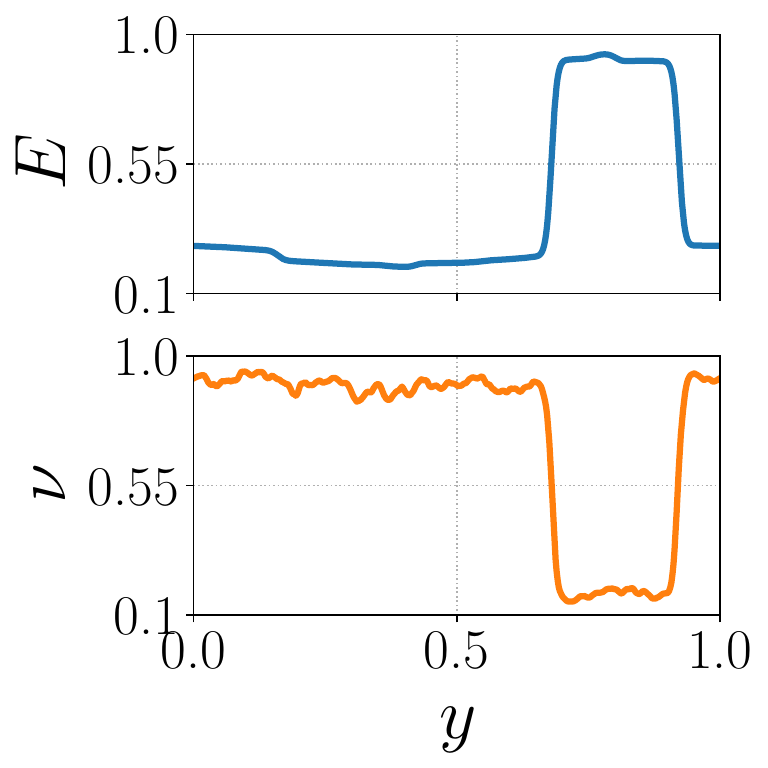} & \multirow{2}{*}[4.5em]{\includegraphics[width=\linewidth]{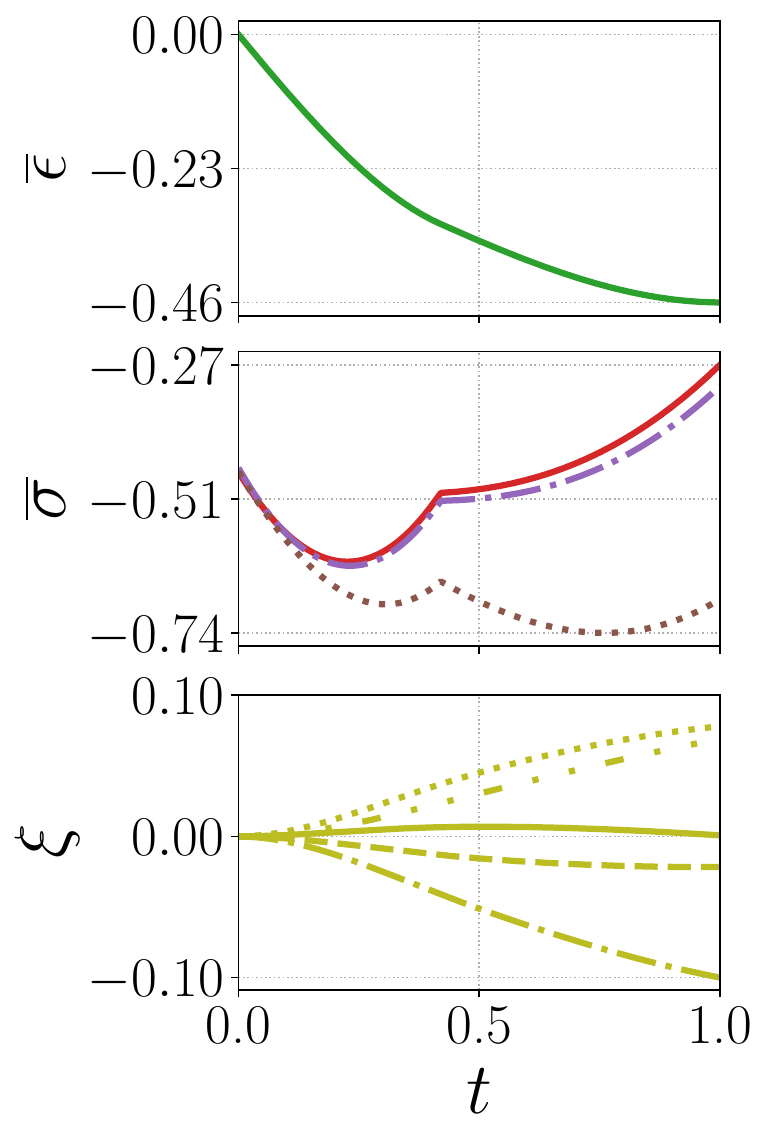}}  \\
    & \quad\makecell[l]{Stress predictions:\\ \includegraphics[width=0.15\linewidth]{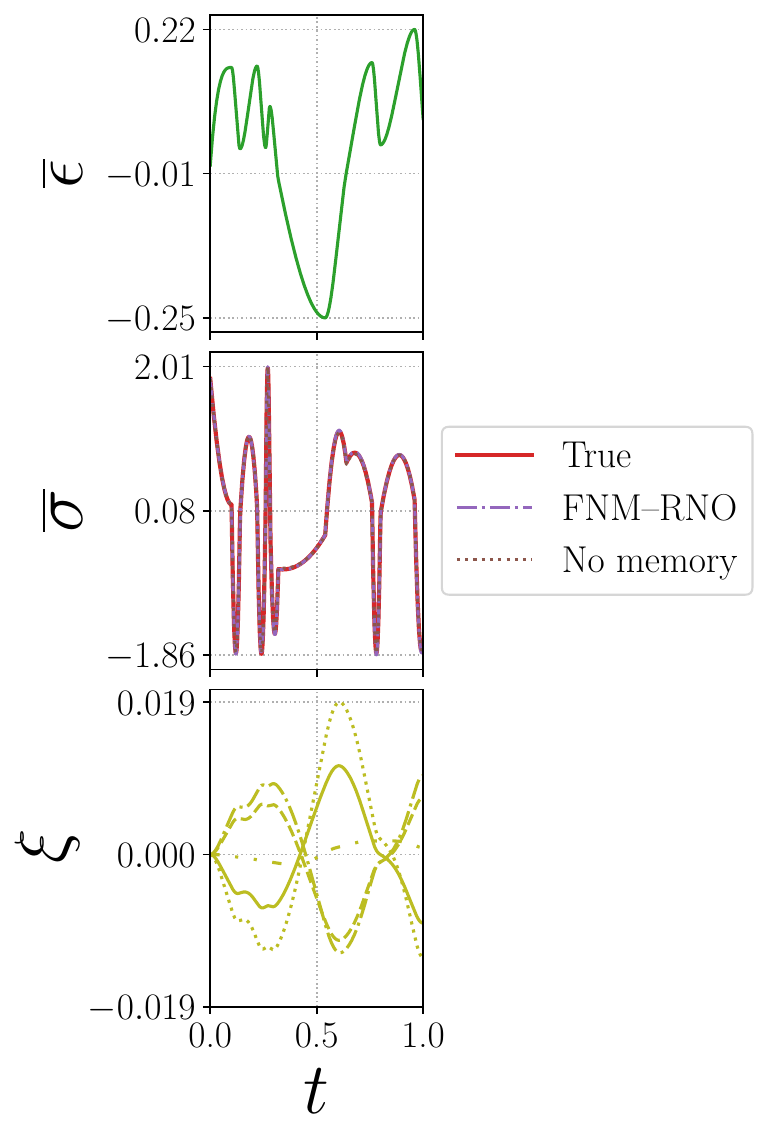} True\\\includegraphics[width=0.15\linewidth]{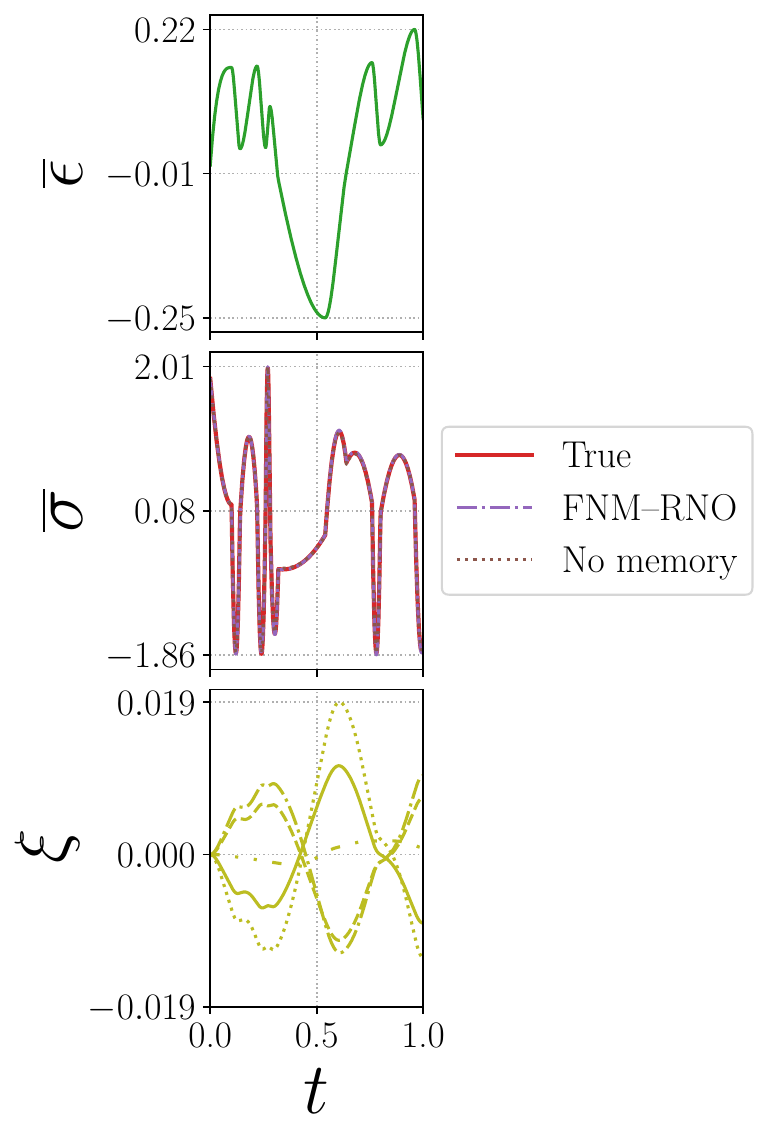} FNM--RNO\\\includegraphics[width=0.15\linewidth]{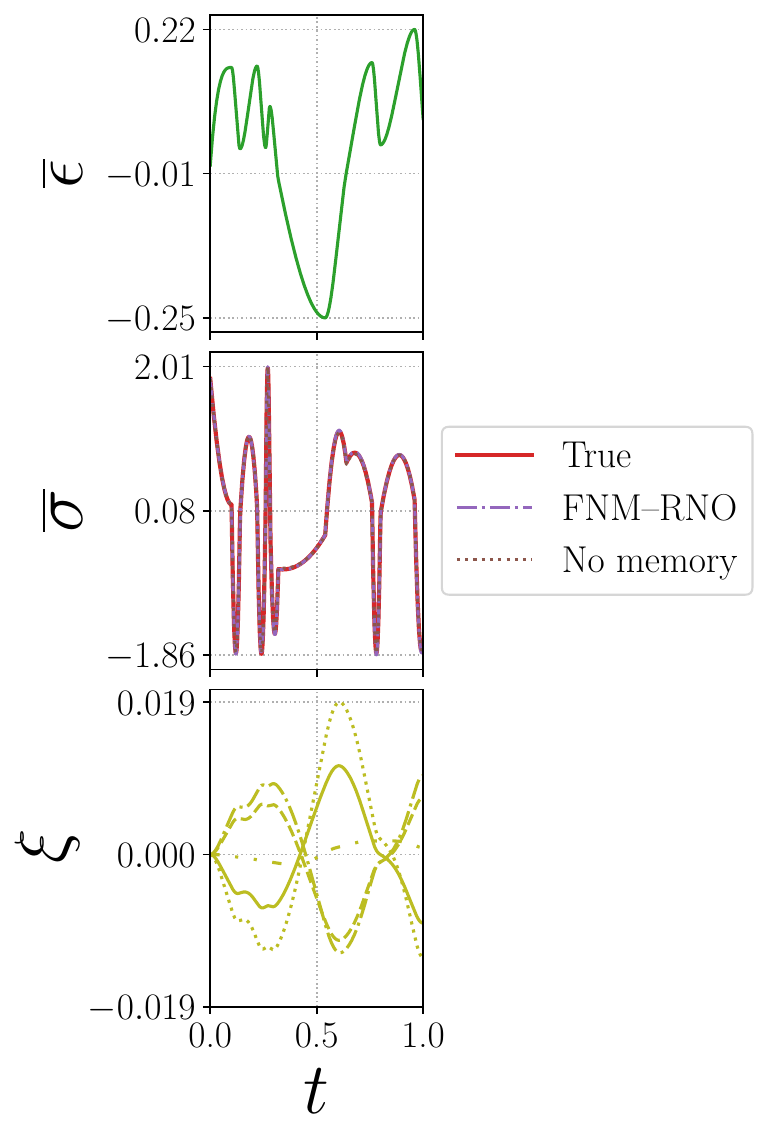} No memory} & & \quad\makecell[l]{Stress predictions:\\ \includegraphics[width=0.15\linewidth]{true_legend.pdf} True\\\includegraphics[width=0.15\linewidth]{rno_legend.pdf} FNM--RNO\\\includegraphics[width=0.15\linewidth] {no_memory_legend.pdf} No memory} & \\
     \multirow{2}{*}[2em]{\rotatebox{90}{\makecell{\bf Median testing error}}} & \includegraphics[width=\linewidth]{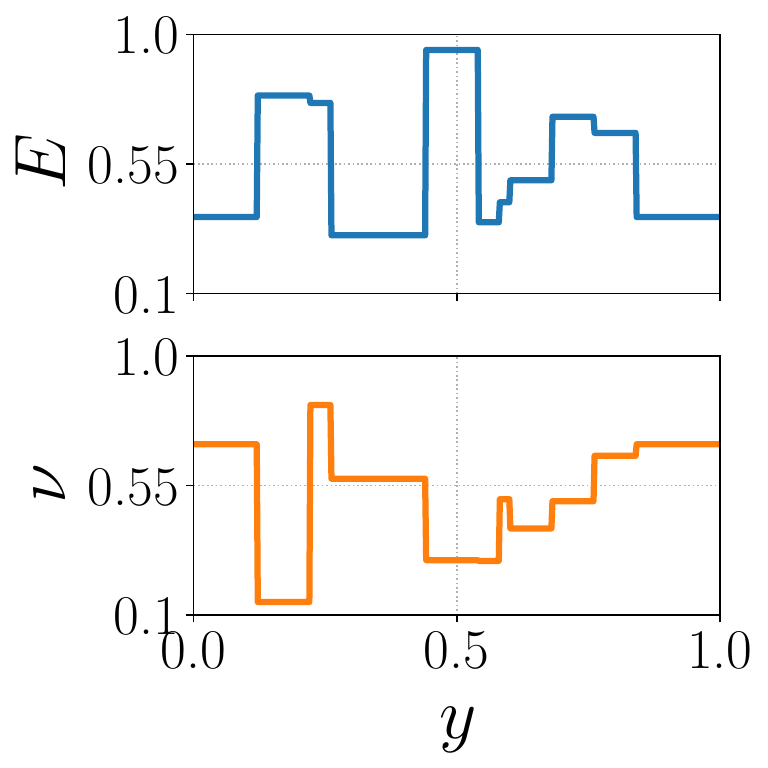} & \multirow{2}{*}[4.5em]{\includegraphics[width=\linewidth]{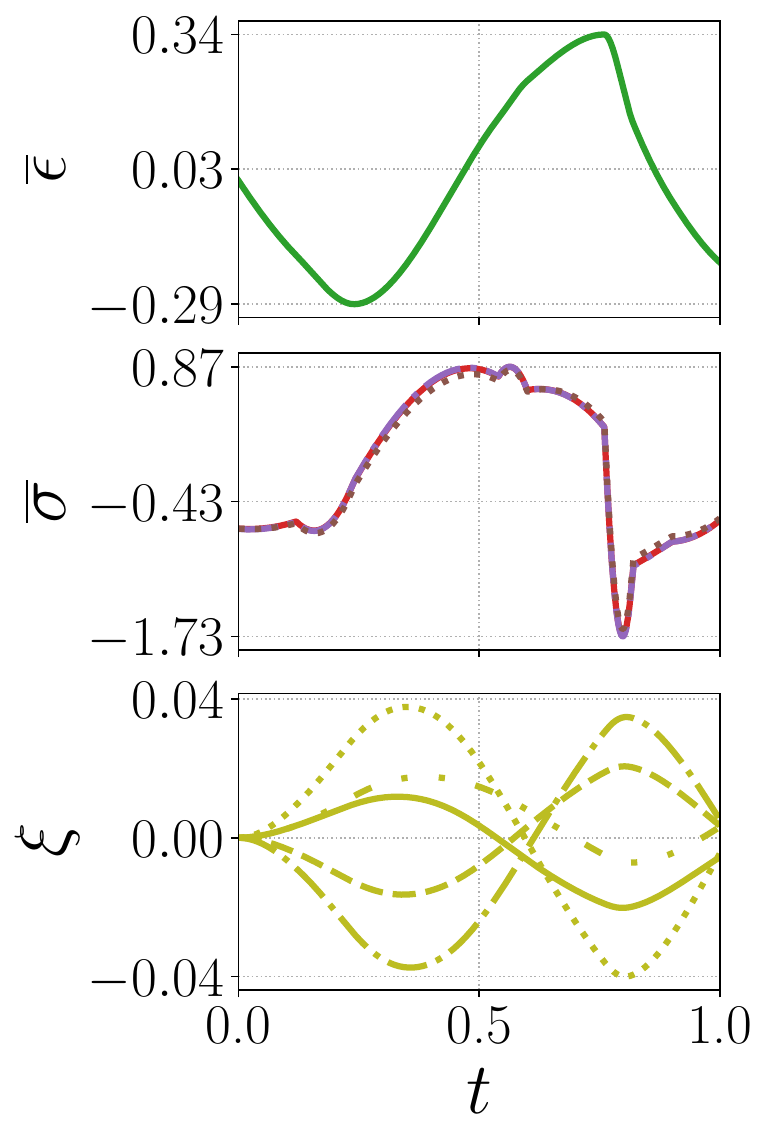}} & \includegraphics[width=\linewidth]{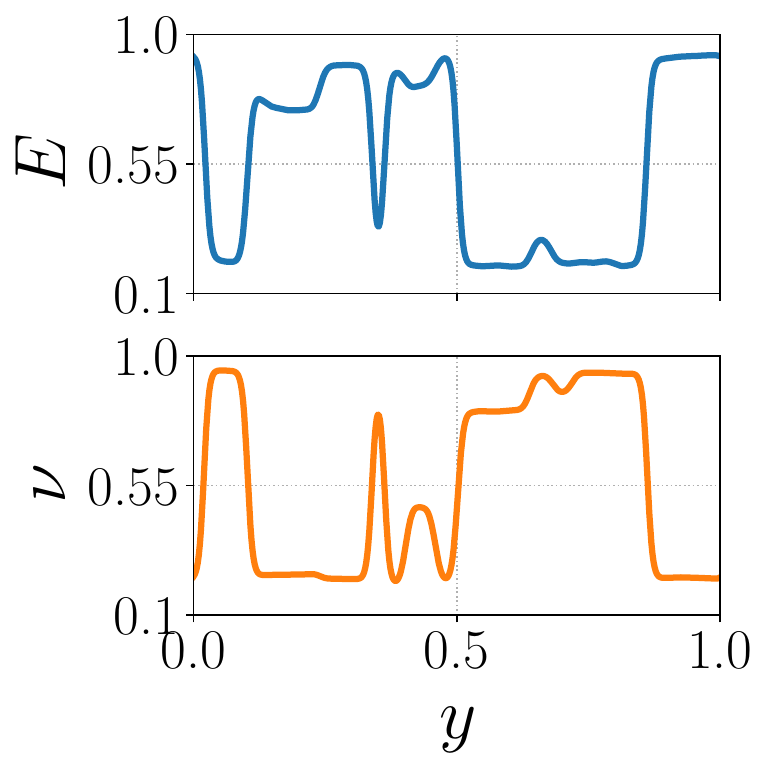} & \multirow{2}{*}[4.5em]{\includegraphics[width=\linewidth]{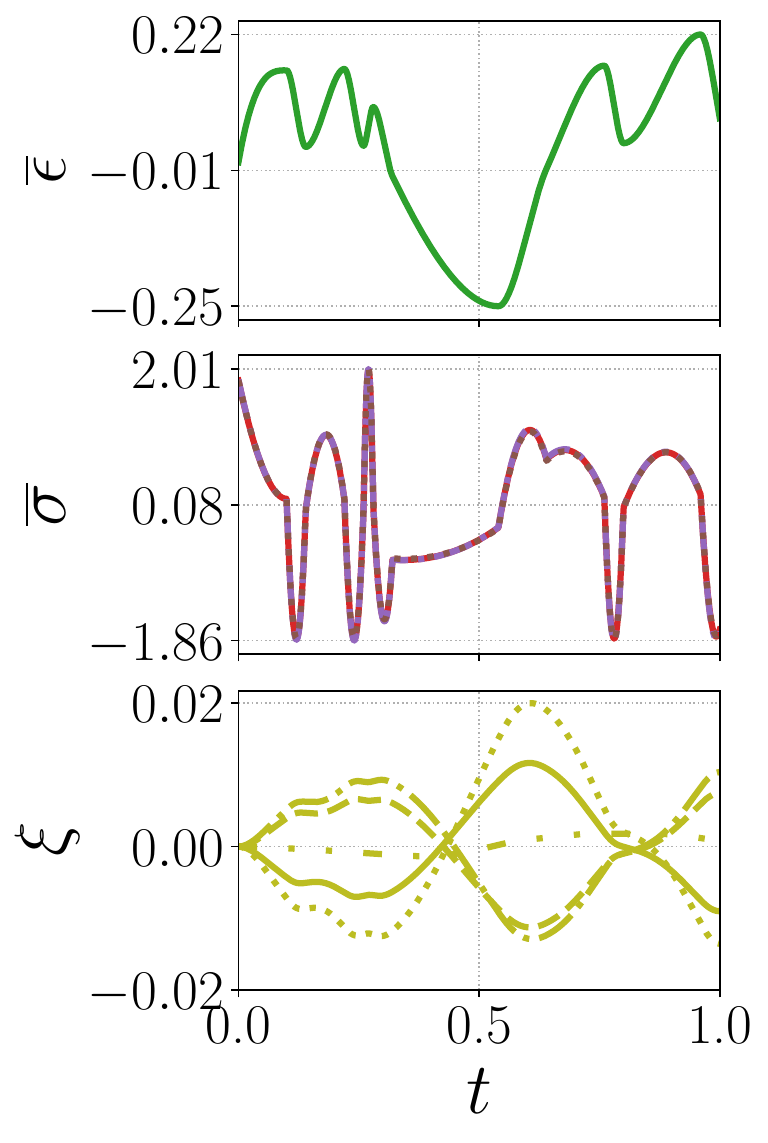}}\\
& \quad\makecell[l]{Stress predictions:\\ \includegraphics[width=0.15\linewidth]{true_legend.pdf} True\\\includegraphics[width=0.15\linewidth]{rno_legend.pdf} FNM--RNO\\\includegraphics[width=0.15\linewidth] {no_memory_legend.pdf} No memory} & & \quad\makecell[l]{Stress predictions:\\ \includegraphics[width=0.15\linewidth]{true_legend.pdf} True\\\includegraphics[width=0.15\linewidth]{rno_legend.pdf} FNM--RNO\\\includegraphics[width=0.15\linewidth]{no_memory_legend.pdf} No memory} &
    \end{tabular}
    \addtolength{\tabcolsep}{6pt}
    }
    \caption{Visualization of testing samples and FNM--RNO predictions (averaged stress $\sigmabar$ and internal variables $\xi$) with the largest and median relative $L^2$ testing error. We consider the trained FNM--RNO with 5 internal variables. We visualize the averaged stress response with (\textit{solid lines}) and without (\textit{dotted lines}) memory effects along with the FNM--RNO prediction (\textit{dash-dot lines}).}
    \label{fig:testing_visual}
\end{figure}

\paragraph{\bf Discretization Agnostic} The FNM--RNO constitutive model can be learned on data with one set of spatial and temporal resolutions and used to make predictions on another. To illustrate this property, we evaluate the trained FNM--RNO model on testing samples with varying spatial and temporal resolutions, and the resulting mean relative $L^2$ testing errors are visualized in \cref{fig:multiresolution_kv}. Testing samples with different resolutions for the HMC materials and the averaged stress trajectories are generated using linear interpolation. 

The results show that the accuracy of FNM--RNO predictions is relatively sensitive to changes in temporal resolution, primarily due to truncation errors in estimating the evolution of internal variables. For the PC dataset, the testing error shows less sensitivity to spatial resolution changes, as the discontinuity points are located on a low-resolution grid. In contrast, for the HMC dataset, the testing error increases with decreased spatial resolution when the temporal resolution is high. Overall, the FNM--RNO can predict stress response on finer spatial and temporal resolutions than those used for training without significant deterioration in accuracy.~\\

\begin{figure}[htbp]
    \centering
    \scalebox{0.9}{
    \begin{tabular}{c c}
        \hspace{0.06\linewidth}\makecell{\bf Mean relative $L^2$ testing error \\ \bf on the PC dataset\bf \\\bf  at different testing resolutions} & \hspace{0.06\linewidth}\makecell{\bf Mean relative $L^2$ testing error \\ \bf on the HMC dataset \\\bf at different testing resolutions}   \\
        \includegraphics[width=0.42\linewidth]{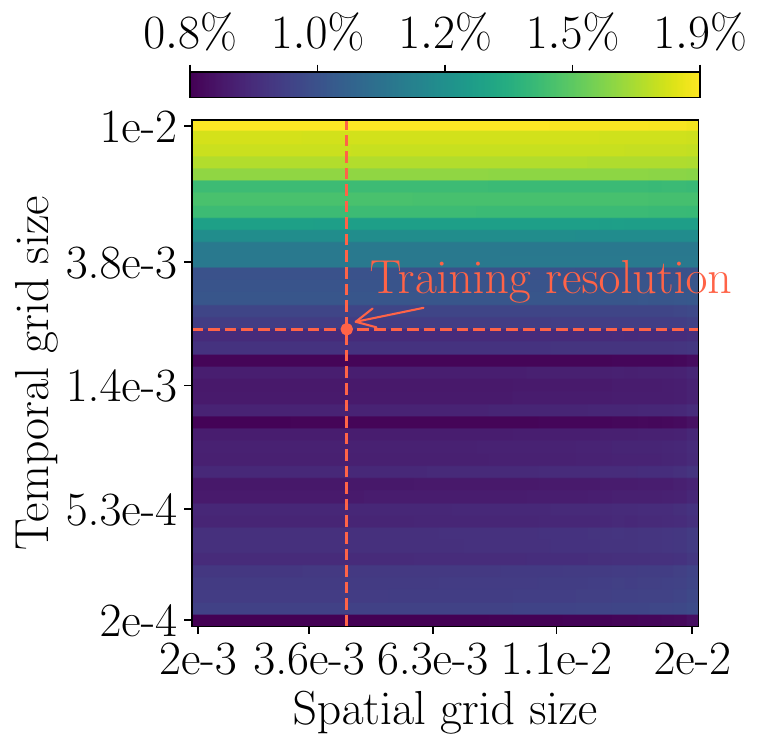} & \includegraphics[width=0.42\linewidth]{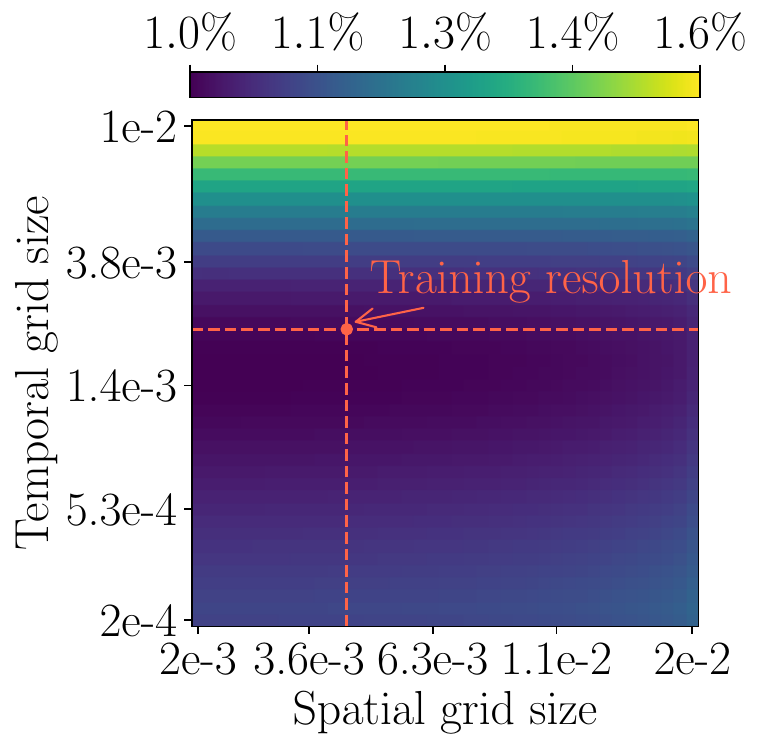}
    \end{tabular}
    }
    \caption{The mean relative $L^2$ error on the PC (\textit{left}) and HMC (\textit{right}) testing samples at different testing resolutions for the trained FNM--RNO with 5 internal variables.}
    \label{fig:multiresolution_kv}
\end{figure}

\paragraph{\bf Deployment in Macroscale Simulations}
    We consider a macroscale problem on domain $\cD \times \cT=[0,1] \times [0,1]$ and employ a body forcing term $f(x,t)= 100\sin(8\pi(x + t))$. This leads to a stress evolution given by $\sigma(x,t) = -800\cos(8\pi(x+t))$. Our goal is to solve for the displacement $u(x,t)$ with the boundary conditions $u(0, t)=u(1,t)=0$ for all $t\in[0, 1]$ and the initial condition $u(x, 0)=0$ for all $x\in[0,1]$.
    We use four different constitutive models to solve for the displacement:
    \vspace{5pt}
    \begin{enumerate}[label=(\roman*)]
    \item Homogenized stress response using a memory kernel form as in \cref{eqn:kernelform},
    \item Multiscale stress response with $\varepsilon^{-1} = 5, 10, 20, 40$ or $80$,
    \item Homogenized stress response without memory effects; $K\equiv0$ in \cref{eqn:kernelform},
    \item FNM--RNO stress response with $5$ internal variables.
    \end{enumerate}
    \vspace{5pt}
    We expect that the multiscale solution converges to the homogenized solution in $L^2$ as $\varepsilon\to 0$ for each material microstructure, while FNM--RNO stress response and stress response without memory leads to biased macroscale solutions compared to the homogenized stress response.

    In \cref{fig:macro_error}, we visualize the distributions of the relative $L^2$ error in macroscale solutions using 800 material microstructure samples from the HMC dataset, where the macroscale solutions obtained by the homogenized stress response are used as the reference. The multiscale solutions linearly converge to the homogenized solutions as $\varepsilon \to 0$, and the FNM--RNO solutions have error distribution similar to that arising from the multiscale solutions at $\varepsilon^{-1}=20$. The stress response without memory leads to macroscale solution error higher on average than FNM--RNO, and lower on average than multiscale simulation with $\varepsilon^{-1}=10$. 
    
    In \cref{fig:macro_solution_sample}, we visualize a material microstructure and its macroscale solutions. This sample corresponds to the median of the FNM--RNO macroscale solution error distribution shown in \cref{fig:macro_error}. We also visualize the pointwise absolute error of macroscale solutions compared to the one obtained via homogenized stress response. The multiscale stress response does not exhibit noticeable accumulation of error in time. The FNM--RNO solution accumulates errors in time but the overall error is less than that resulting from use of linear stress response without memory effects.

    \begin{figure}[htbp]
    \centering
        \scalebox{0.9}{
    \begin{tabular}{c}
        \hspace{0.09\linewidth}\makecell{\bf Relative $L^2$ error in macroscale solutions\\
        \bf using HMC materials}\\
        \includegraphics[width=0.5\linewidth]{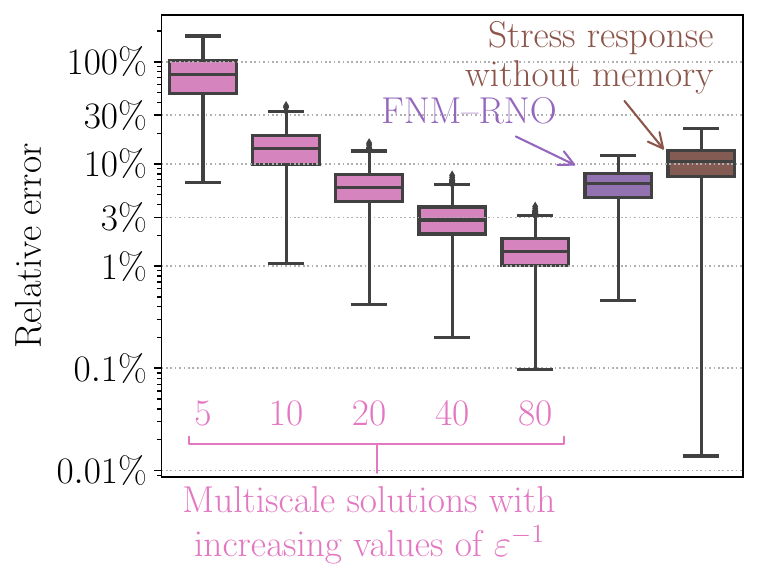}
        \end{tabular}}
    \caption{The distributions of the relative $L^2$ error in macroscale solutions obtained by different constitutive models using 800 material microstructure samples from the HMC dataset. The error is computed relative to the macroscale solutions obtained by the homogenized stress response using memory kernels.}
    \label{fig:macro_error}
    \end{figure}

    \begin{figure}[htbp]
        \centering
        \scalebox{0.84}{
        \addtolength{\tabcolsep}{-6pt}
        \begin{tabular}{c}
            \hspace{0.03\linewidth}\makecell{\bf Reference solution \\\bf using homogenized \\\bf stress response}\\
            \includegraphics[width=0.25\linewidth]{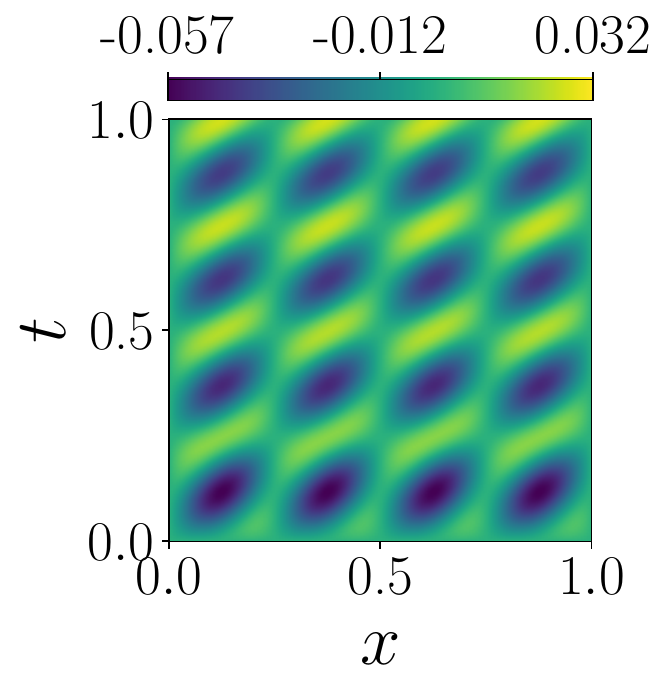}\\
            \hspace{0.045\linewidth}\makecell{\bf Material\\ \bf microstructure} \\ 
            \includegraphics[width=0.25\linewidth]{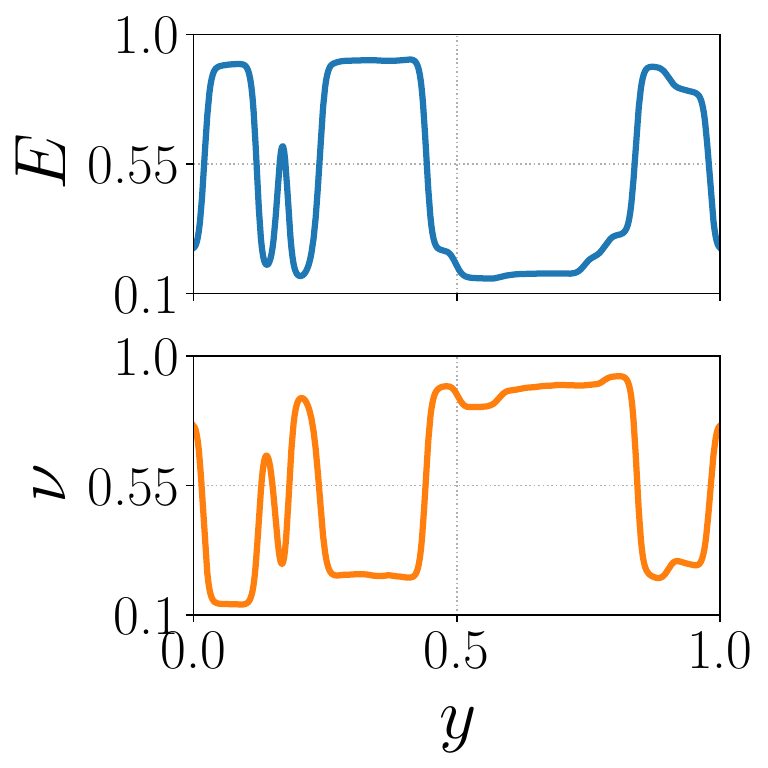}
        \end{tabular}
        \begin{tabular}{C D D D}
         & \hspace{0.12\linewidth}\makecell{\bf Multiscale\\\bf stress response} & \makecell{\bf \hspace{0.1\linewidth}Stress response\\\bf \hspace{0.1\linewidth}without memory} & \hspace{0.12\linewidth}\makecell{\bf FNM--RNO\\\bf stress response}\\
            \rotatebox{90}{\makecell{\bf Macroscale\\\bf solution}}&
             \includegraphics[width=\linewidth]{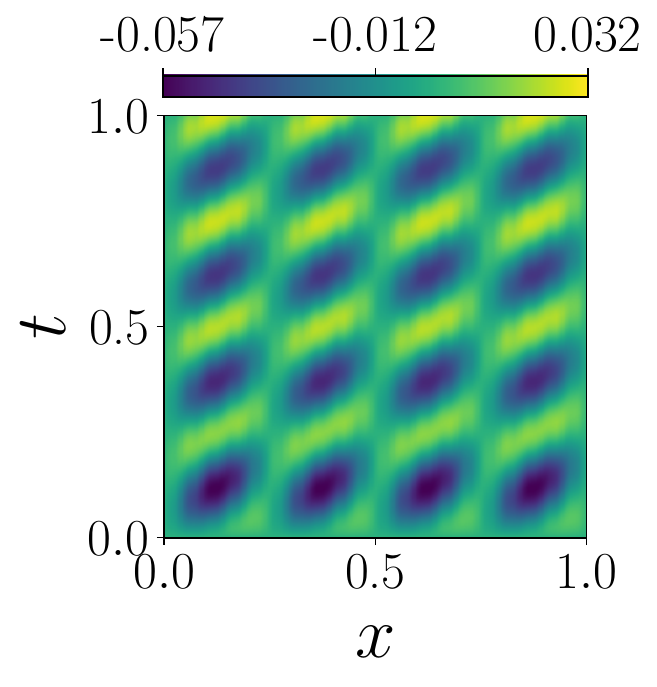}& \includegraphics[width=\linewidth]{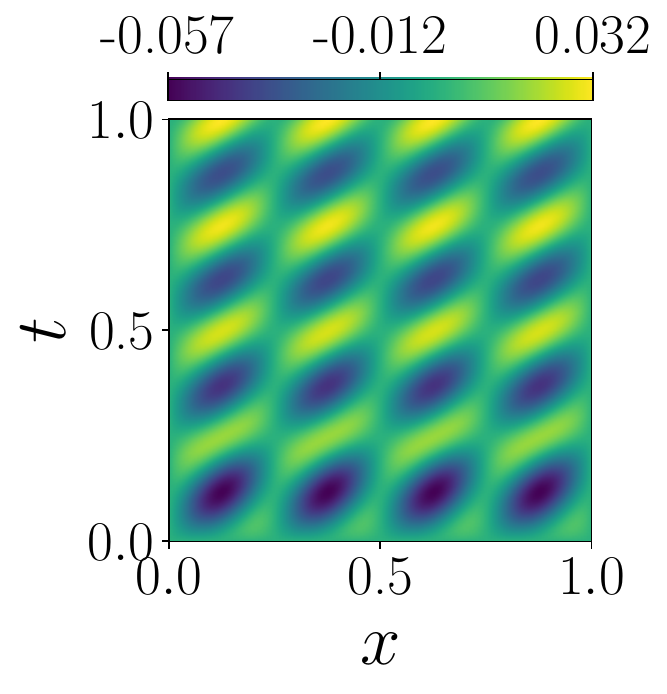} & \includegraphics[width=\linewidth]{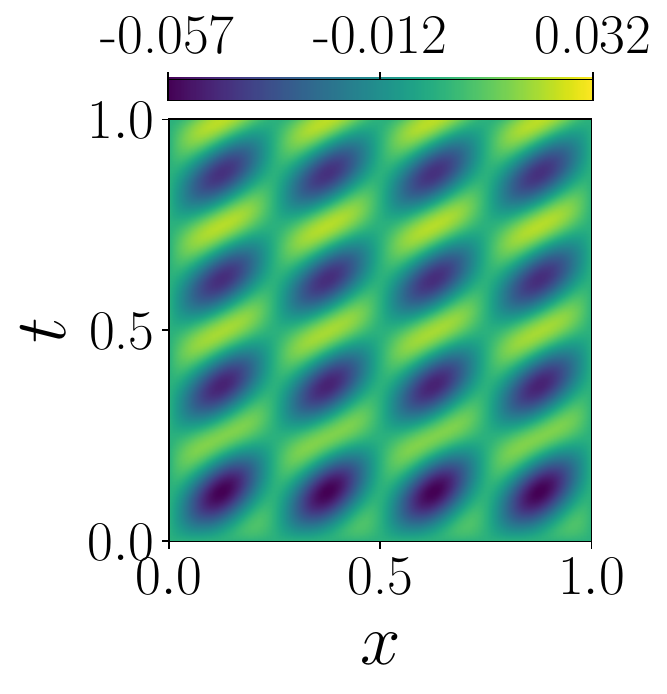}  \\
             & \makecell{$8.5\%$ relative\\ $L^2$ error} & \makecell{$13\%$ relative\\ $L^2$ error} & \makecell{$6.6\%$ relative\\ $L^2$ error}\\
             \rotatebox{90}{\makecell{\bf Pointwise\\ \bf absolute error}}
             & \includegraphics[width=\linewidth]{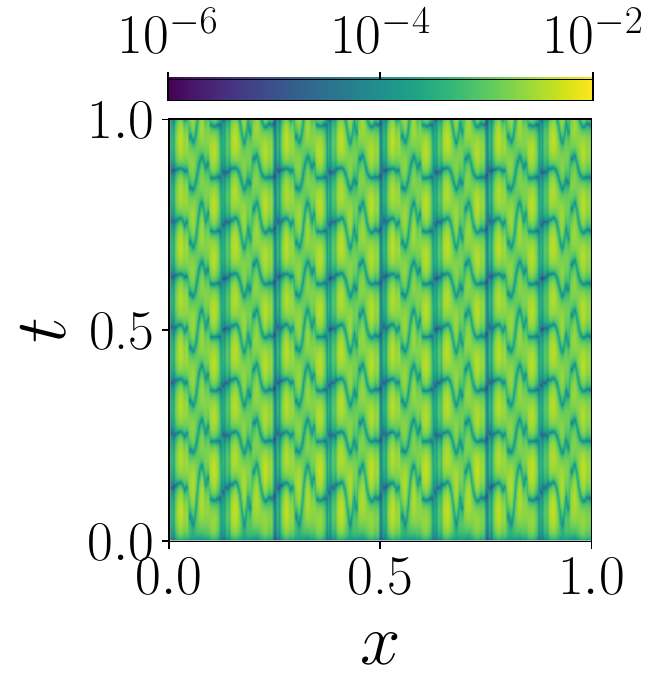}& \includegraphics[width=\linewidth]{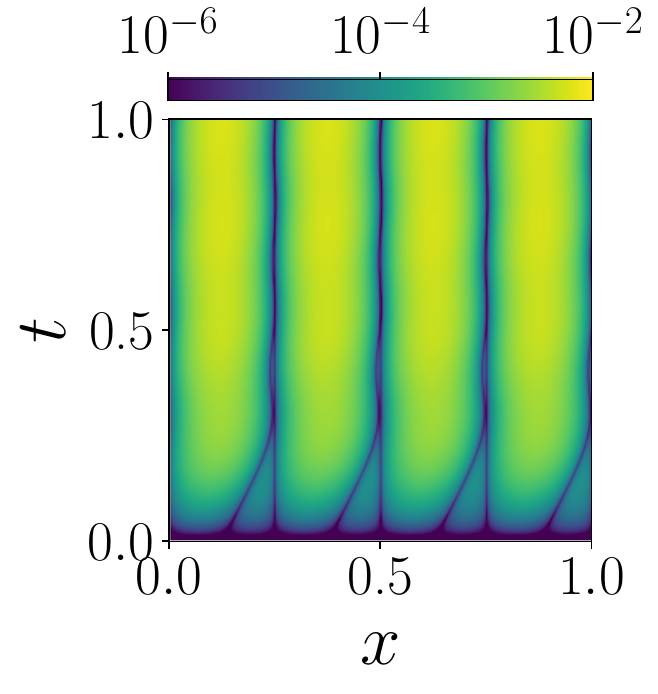} & \includegraphics[width=\linewidth]{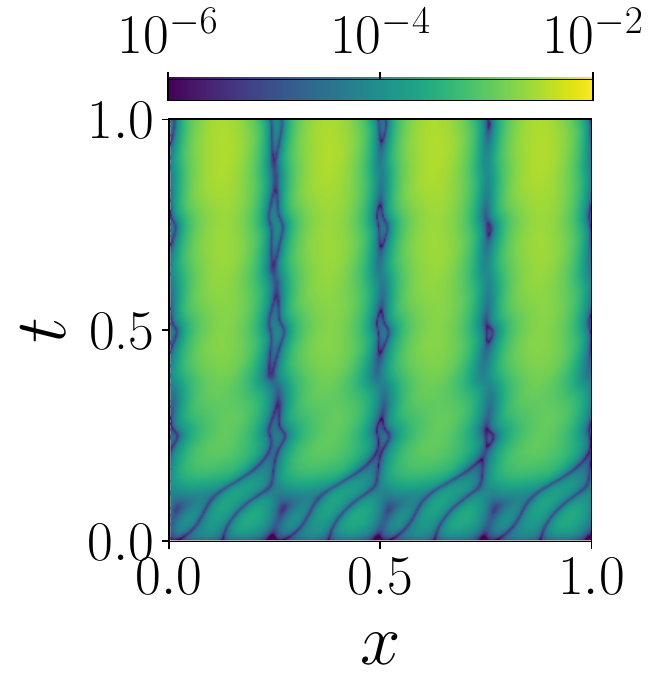}
        \end{tabular}\addtolength{\tabcolsep}{6pt}}\\
        \scalebox{0.79}{\addtolength{\tabcolsep}{-6pt}
        \begin{tabular}{c c c c c}
        \multicolumn{5}{c}{\bf FNM--RNO internal variables ($\mathbf{L}$ = 1 -- 5)}\\
\includegraphics[width=0.25\linewidth]{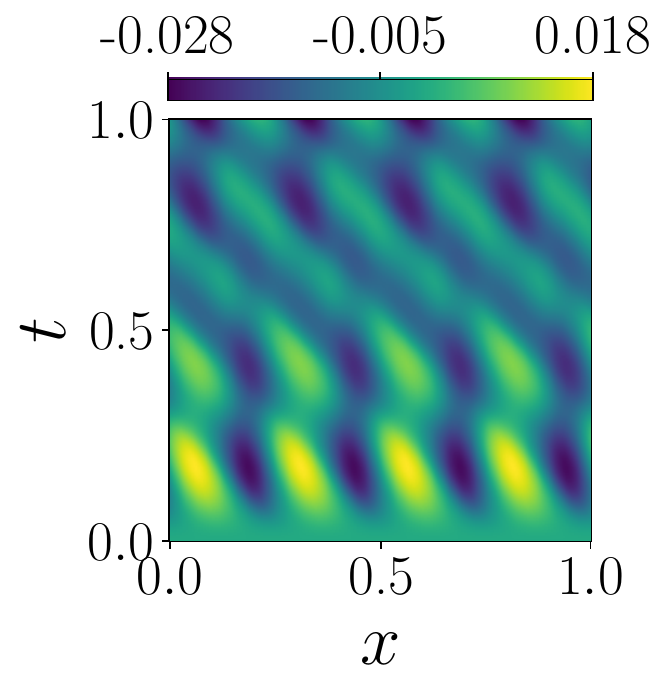} &        \includegraphics[width=0.25\linewidth]{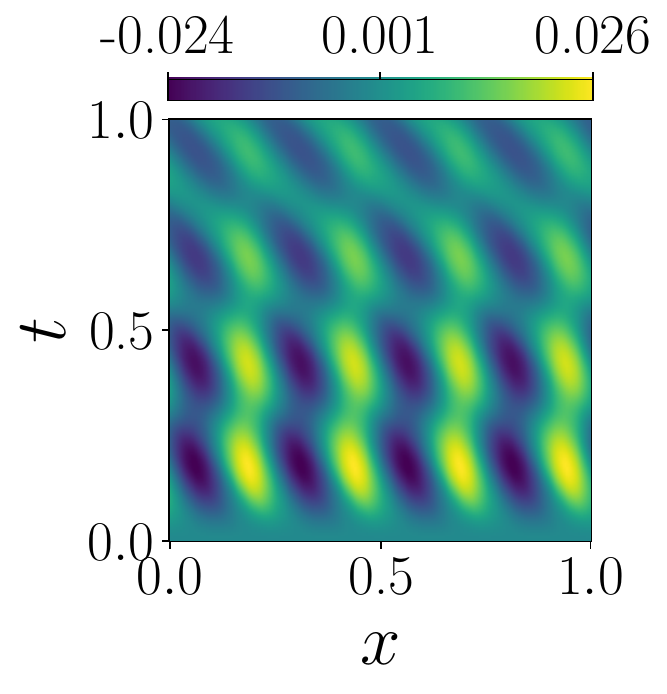} &        \includegraphics[width=0.25\linewidth]{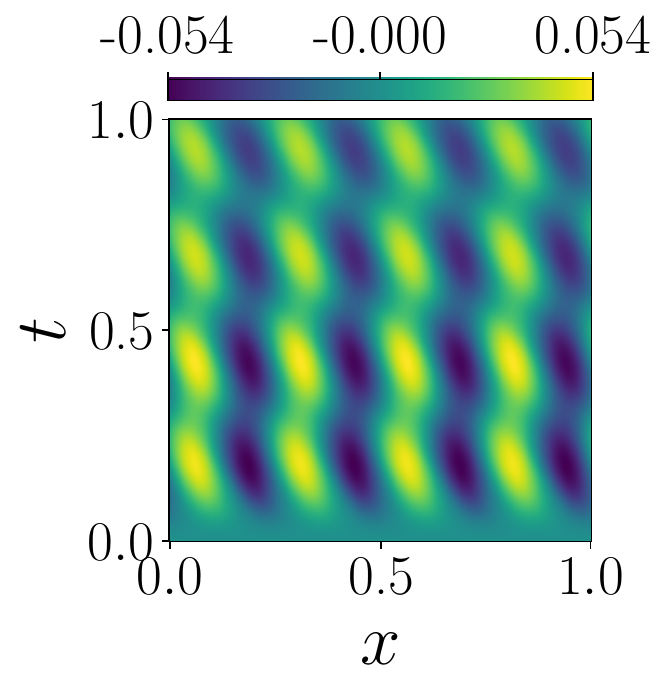} &       \includegraphics[width=0.25\linewidth]{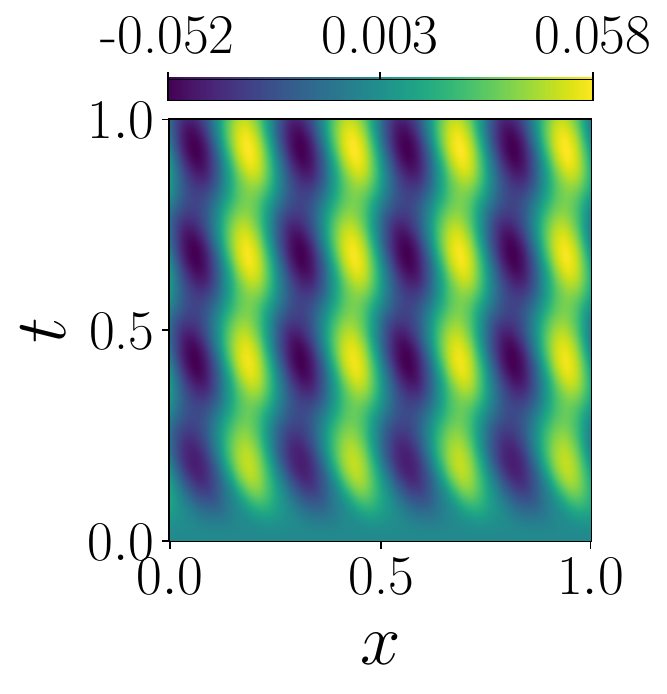} &    \includegraphics[width=0.25\linewidth]{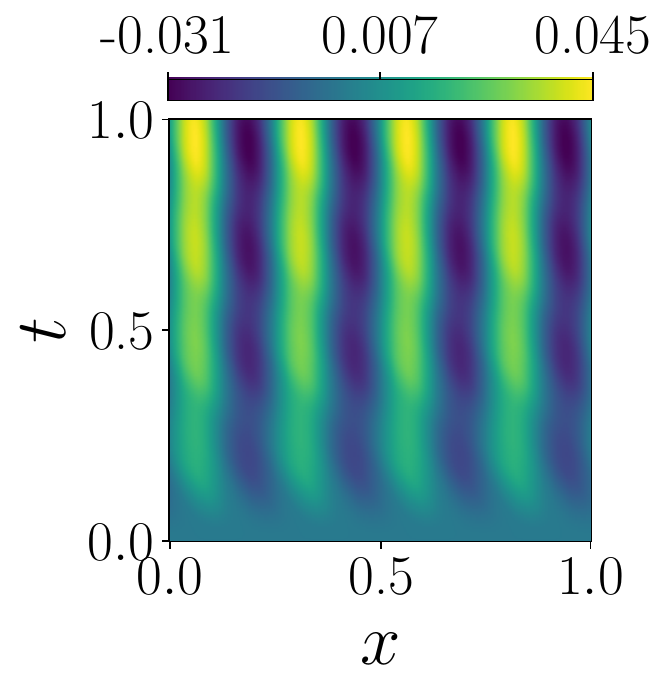}
        \end{tabular}\addtolength{\tabcolsep}{6pt}}
        \caption{Visualization of macroscale solutions using different constitutive models for an HMC material microstructure sample. We visualize the multiscale solution with $\varepsilon^{-1}=20$. This sample corresponds to the median of the FNM--RNO macroscale solution relative error distribution in \cref{fig:macro_error}.}
        \label{fig:macro_solution_sample}
    \end{figure}

\subsection{Application to Elasto-Viscoplasticity}\label{subsec:plastic}

We consider elasto-viscoplastic composites in one dimension. The cell problem is given by
\begin{subequations}\label{eq:evp_cell_problem}
    \begin{align}
        &-\p_y\sigma(y, t) = 0, &\quad (y,t) &\in \Omega \times \cT, \\
         &\sigma(y, t) = E(y)(\p_y u(y, t) - \epsilon_p(y,t)), &\quad (y,t) &\in \Omega \times \cT, \\
        &\partial_t \epsilon_p(y, t) = \dot{\epsilon}_{p0}(y)\text{sign}(\sigma(y,t))\left(\frac{|\sigma(y,t)|}{\sigma_Y(y)}\right)^{n(y)}, &\quad (y,t) &\in \Omega \times \cT, \\
        &u(0, t) = 0, \; u(1, t) = \epsbar(t), &\; t &\in \cT, \\
        &u(y, 0) = 0,\; \epsilon_p(y, 0) = 0, &\; y&\in \Omega.
    \end{align}
\end{subequations}
Here $\epsilon_p$ is the plastic strain, $E$ is the Young's modulus, $\dot{\epsilon}_{p0}$ is the rate constant, $\sigma_Y$ is the yield stress, and $n$ is the rate exponent. We assume that these four material parameters $(E,\dot{\epsilon}_{p0},\sigma_Y,n)$ vary spatially in the unit cell. Our goal is to learn the constitutive model $\{\{\epsbar(t)\}_{t\in\cT}, E, \dot{\epsilon}_{p0}, \sigma_Y,n\}\mapsto\{\sigmabar(t)\}_{t\in\cT}$, where $\epsbar(t)=\int_{\Omega}\partial_y u(y,t) dy$ and $\sigmabar(t)=\int_{\Omega}\sigma(y,t) dy$. We highlight the fact that the constitutive model can be expressed using the averaged plastic strain $\overline{\epsilon}_p(t) = \int_{\Omega}\epsilon_p(y, t) dy$ as an internal variable; see~\cite[Eq. 11]{liu2023learning}

We generate two datasets following a strategy similar to that described in \cref{subsec:data_set}. The piecewise-constant random material (PC-EVP) uses a uniform distribution on $[1, 10]\times[0.5, 2.0]\times[0.1, 1.0]\times[1, 20]$, independently drawn in each of the four components of the materials property vector $(E^{(j)}, \dot{\epsilon}_{p0}^{(j)}, \sigma_Y^{(j)}, n^{(j)})$, for each piece with
label $j$, and drawn i.i.d.\ with respect to $j$. The continuous random materials (C-EVP) take the spatially smooth piecewise-constant random material as the random mean function. The sampling procedure for the mean is similar to HMC materials in \cref{subsec:data_set}, except that the values taken in each piece for the four materials are independently distributed.

We consider an FNM--RNO architecture given by
\begin{subequations}\label{eqn:RNOevp}
\begin{align}
    \sigmabar(t) &= F_{\FNM}(\epsbar(t), \xi(t); E, \dot{\epsilon}_{p0}, \sigma_Y,n)\\
    \dot{\xi}(t) &= G_{\FNM}(\epsbar(t), \xi(t); E, \dot{\epsilon}_{p0}, \sigma_Y,n)\\
    \xi(0) &= 0.
\end{align}
\end{subequations}
Note that, comparing with \cref{eqn:FNMRNO}, we have suppressed dependence
on $\dot{\epsbar}(t)$ in $F_{\FNM}$, motivated by the analysis in \cite{liu2023learning}. The same work motivates an expectation that the internal variable should be scalar, and indeed we expect it to follow $\overline{\epsilon}_p(t) \approx c(\epsbar, E, \dot{\epsilon}_{p0}, \sigma_Y,n)\xi(t)$, where $c$ is a scalar-valued function that can be found numerically for each set of materials and averaged strain trajectory. We use a similar FNM architecture and training procedure as in \cref{subsec:architecture}, except that: (i) 2 Fourier modes are used to parameterize all the convolution operators; and (ii) the penalty term in the loss function is not included.

We evaluate the trained FNM--RNO on 2,500 testing samples from the PC-EVP and C-EVP datasets. The distributions of the relative $L^2$ testing error are shown in \cref{fig:evp_error_distribution}. For the PC-EVP dataset, the FNM--RNO achieves mean relative $L^2$ errors of $3.4\%$ in predicting the averaged stress and $1.4\%$ in predicting the averaged plastic strain up to a constant. For the C-EVP dataset, the FNM--RNO achieves mean relative $L^2$ errors of $2.8\%$ in predicting the averaged stress and $1.4\%$ in predicting the averaged plastic strain up to a constant. In \cref{fig:testing_visual_EVP}, we visualize the testing samples in the PC-EVP and C-EVP datasets with the largest and median relative $L^2$ error in the averaged stress.

\begin{figure}[htbp]
    \centering
    \scalebox{0.9}{
    \begin{tabular}{c}
         \hspace{0.1\linewidth}\makecell{\bf Relative $L^2$ testing error on \vspace{0.2cm}}\\
         \hspace{0.09\linewidth}\makecell{\bf PC-EVP\\\bf dataset} \hspace{-0.02\linewidth}\makecell{\bf C-EVP\\\bf dataset}\\
         \includegraphics[width=0.5\linewidth]{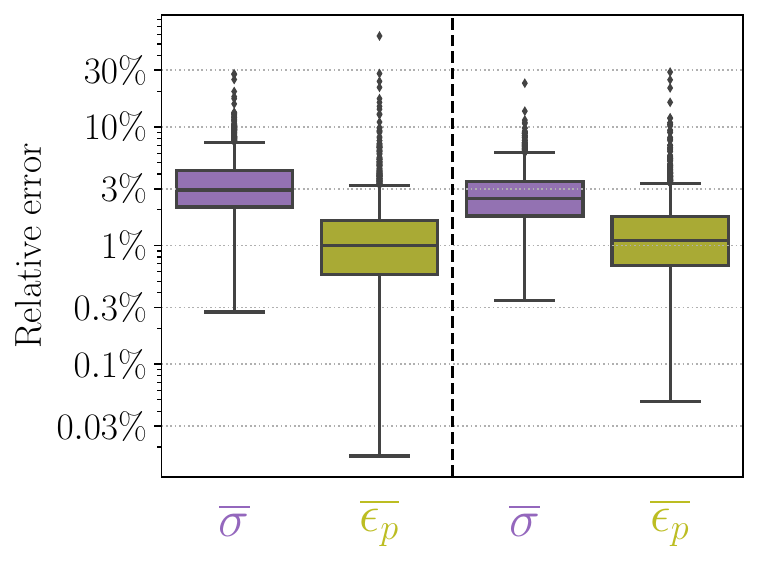} 
    \end{tabular}
    
    }
    \caption{The distributions of the relative $L^2$ error on 2,500 testing samples from the PC-EVP dataset (\textit{left}) and the C-EVP dataset (\textit{right}) for elasto-viscoplasticity. We consider the error in the the trained FNM--RNO predictions of (i) the averaged stress $\sigmabar$ and (ii) the averaged plastic strain $\overline{\epsilon}_p$ up to a multiplicative constant via the internal variable $\xi$.}
    \label{fig:evp_error_distribution}
\end{figure}

\begin{figure}[htbp]
    \centering
    \scalebox{0.8}{
    \addtolength{\tabcolsep}{-6pt}
    \begin{tabular}{E F F F F}
    & \multicolumn{2}{c}{\makecell{\bf Piecewise-constant material\\ \bf dataset (PC-EVP)}} & \multicolumn{2}{c}{\makecell{\bf Continuous material\\\bf dataset (C-EVP)}} \\
     \rotatebox{90}{\makecell{\bf Largest testing error}}   & \includegraphics[width=\linewidth]{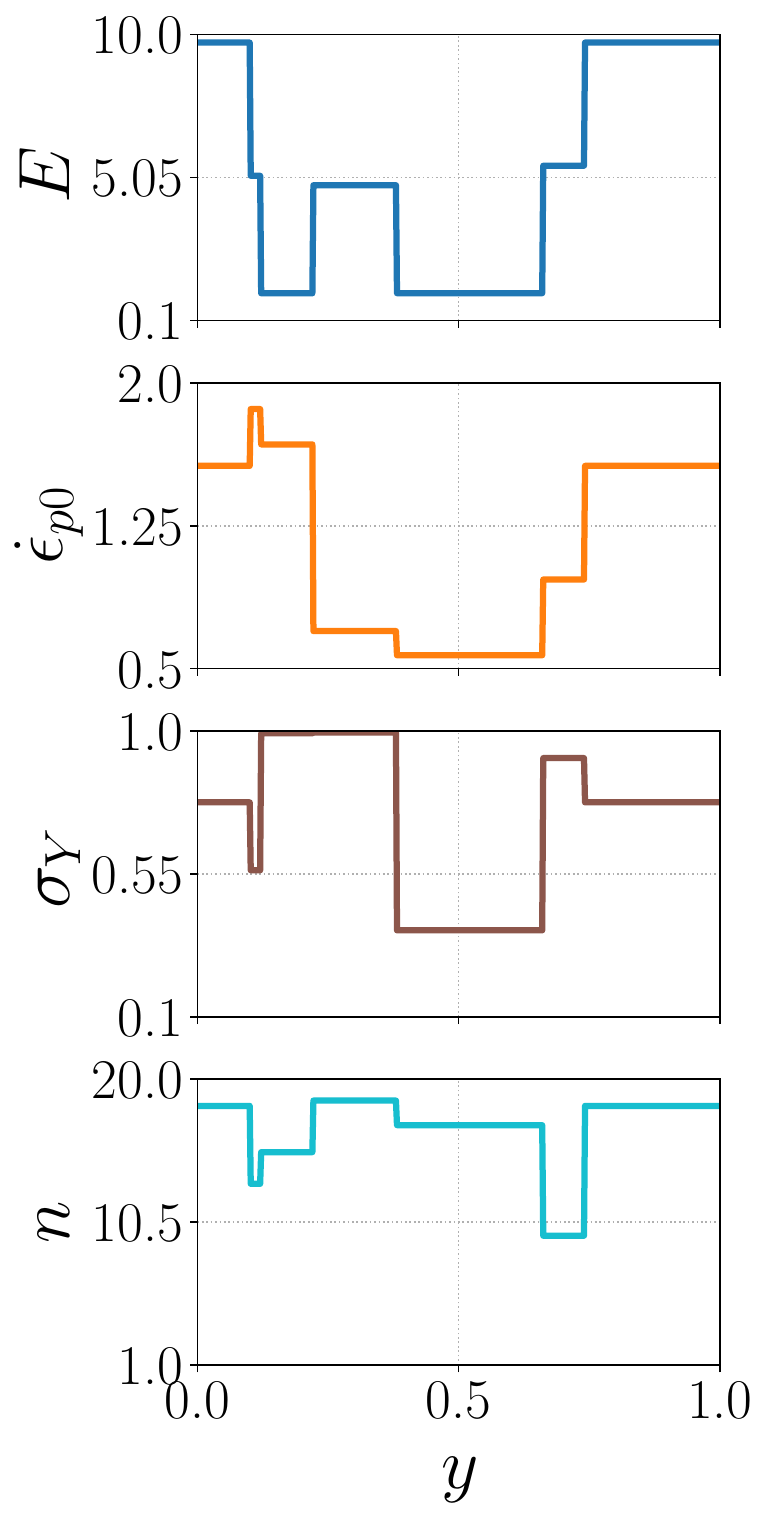} & \includegraphics[width=\linewidth]{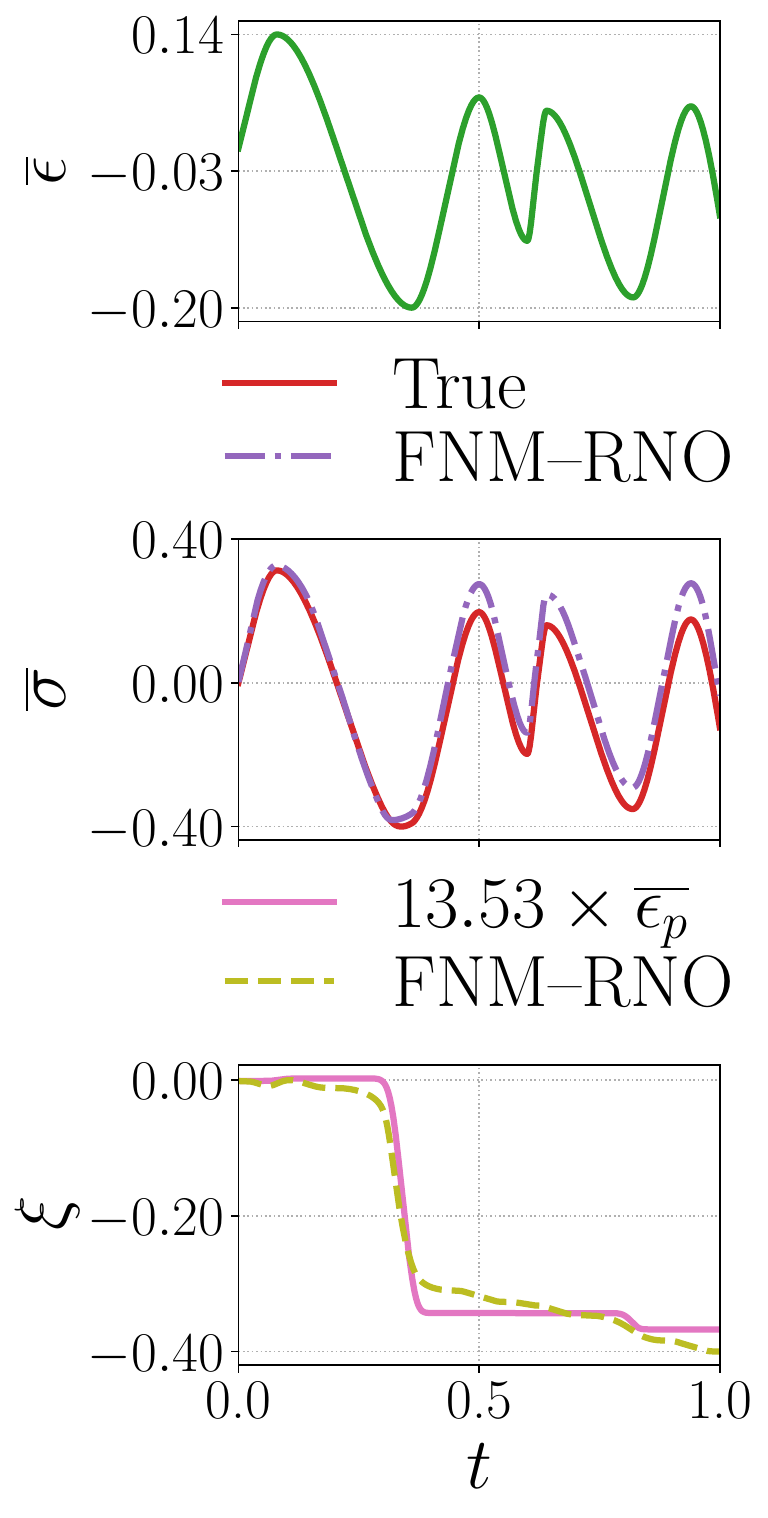} & \includegraphics[width=\linewidth]{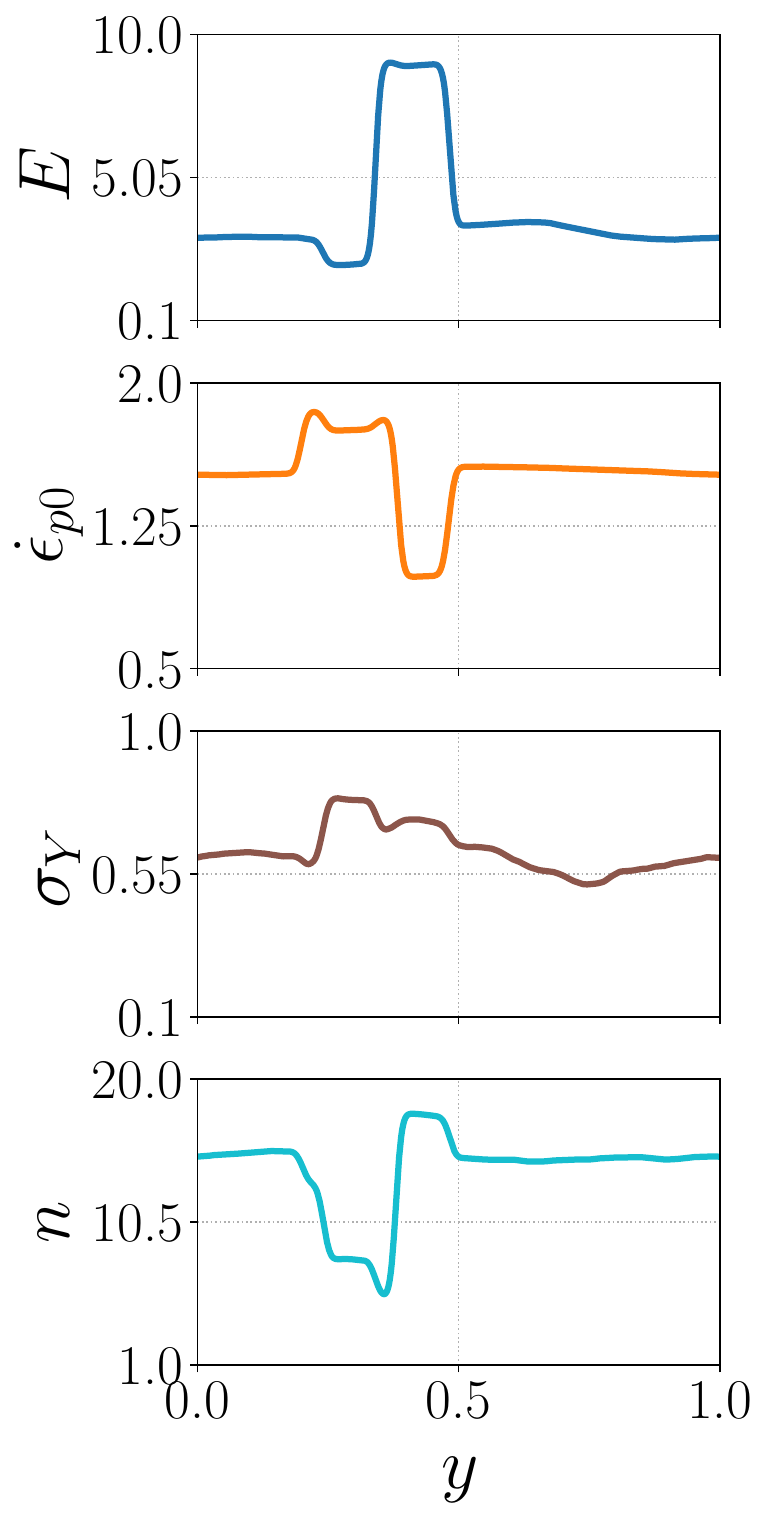} & \includegraphics[width=\linewidth]{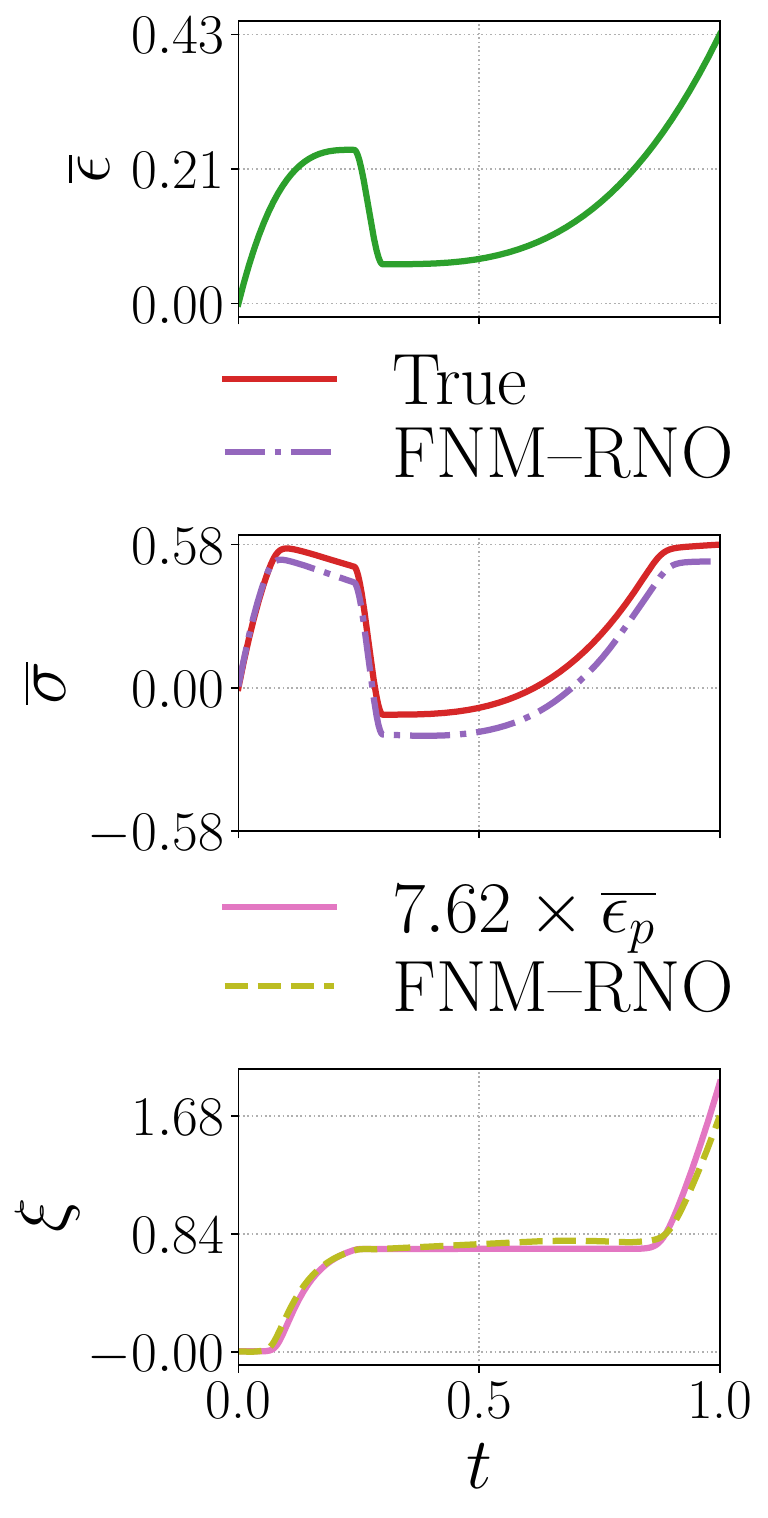}  \\
     \rotatebox{90}{\makecell{\bf Median testing error}} & \includegraphics[width=\linewidth]{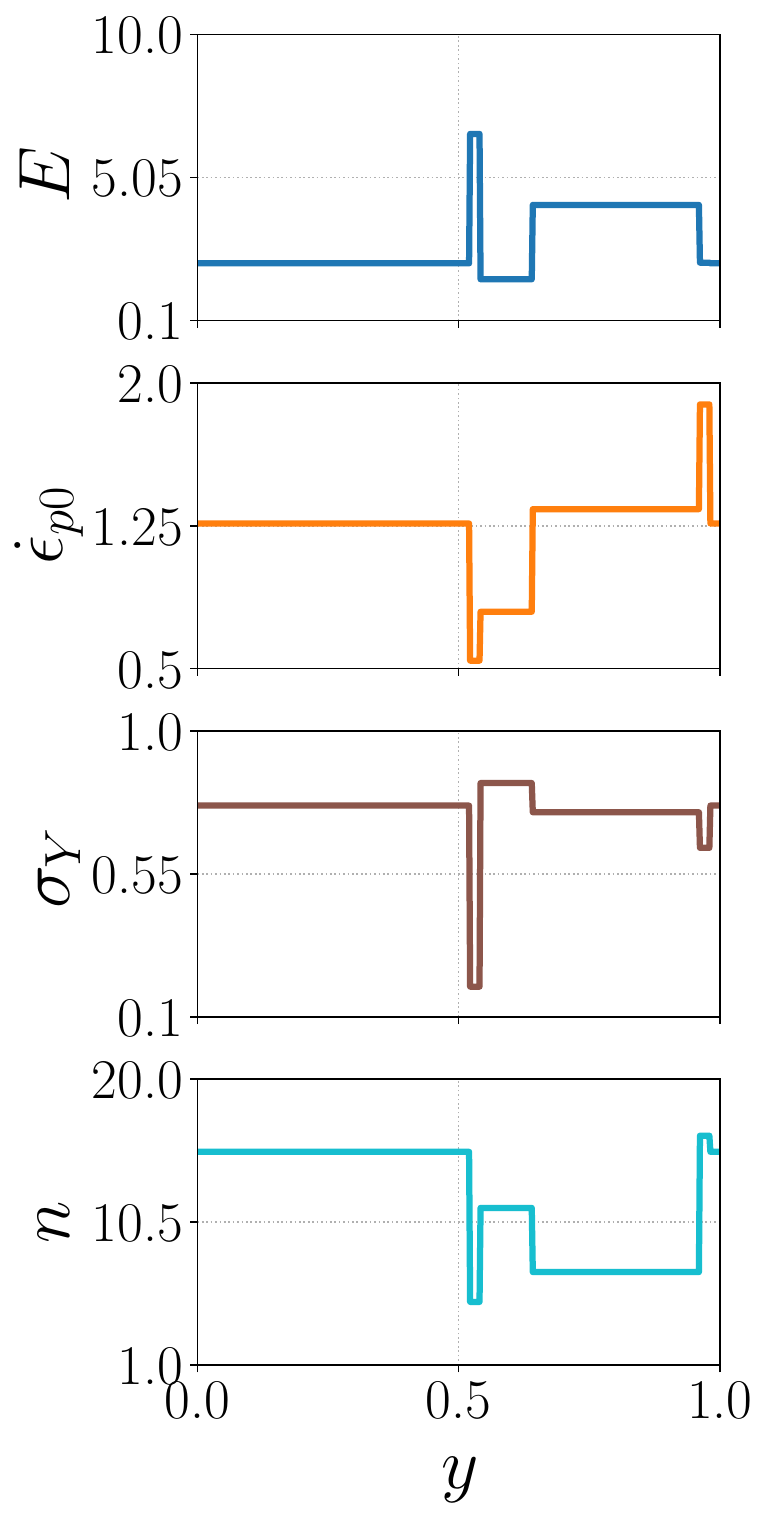} & \includegraphics[width=\linewidth]{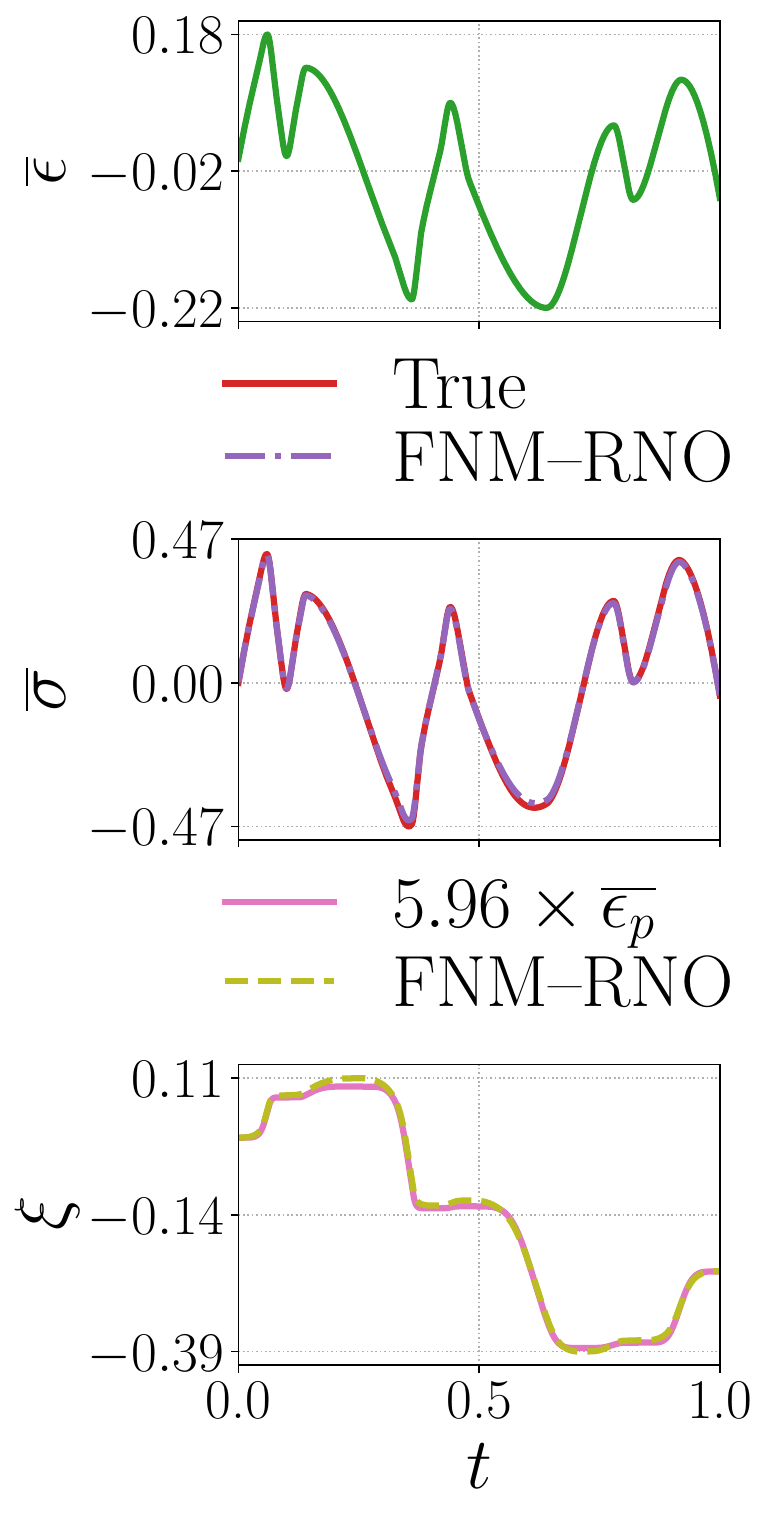} & \includegraphics[width=\linewidth]{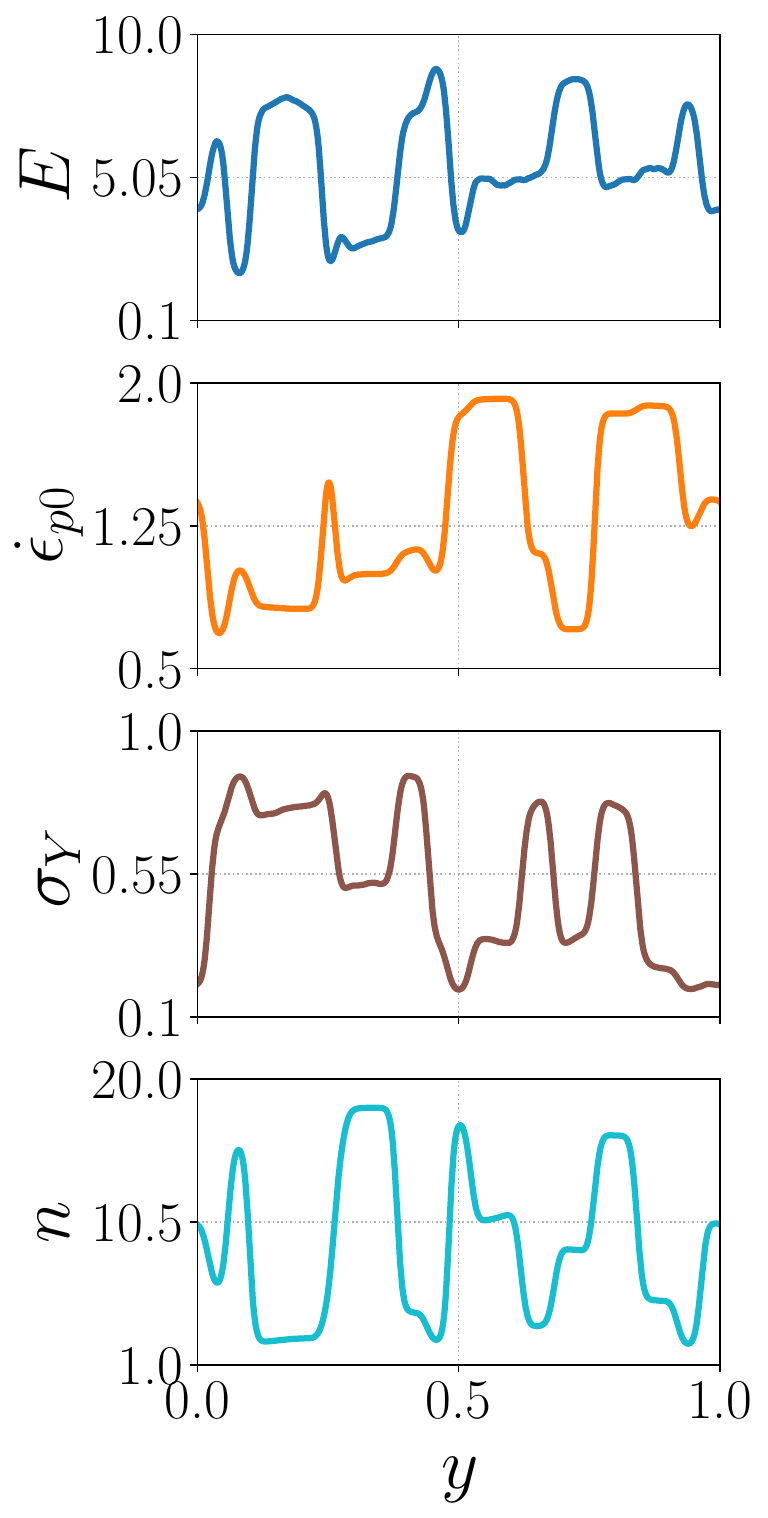} & \includegraphics[width=\linewidth]{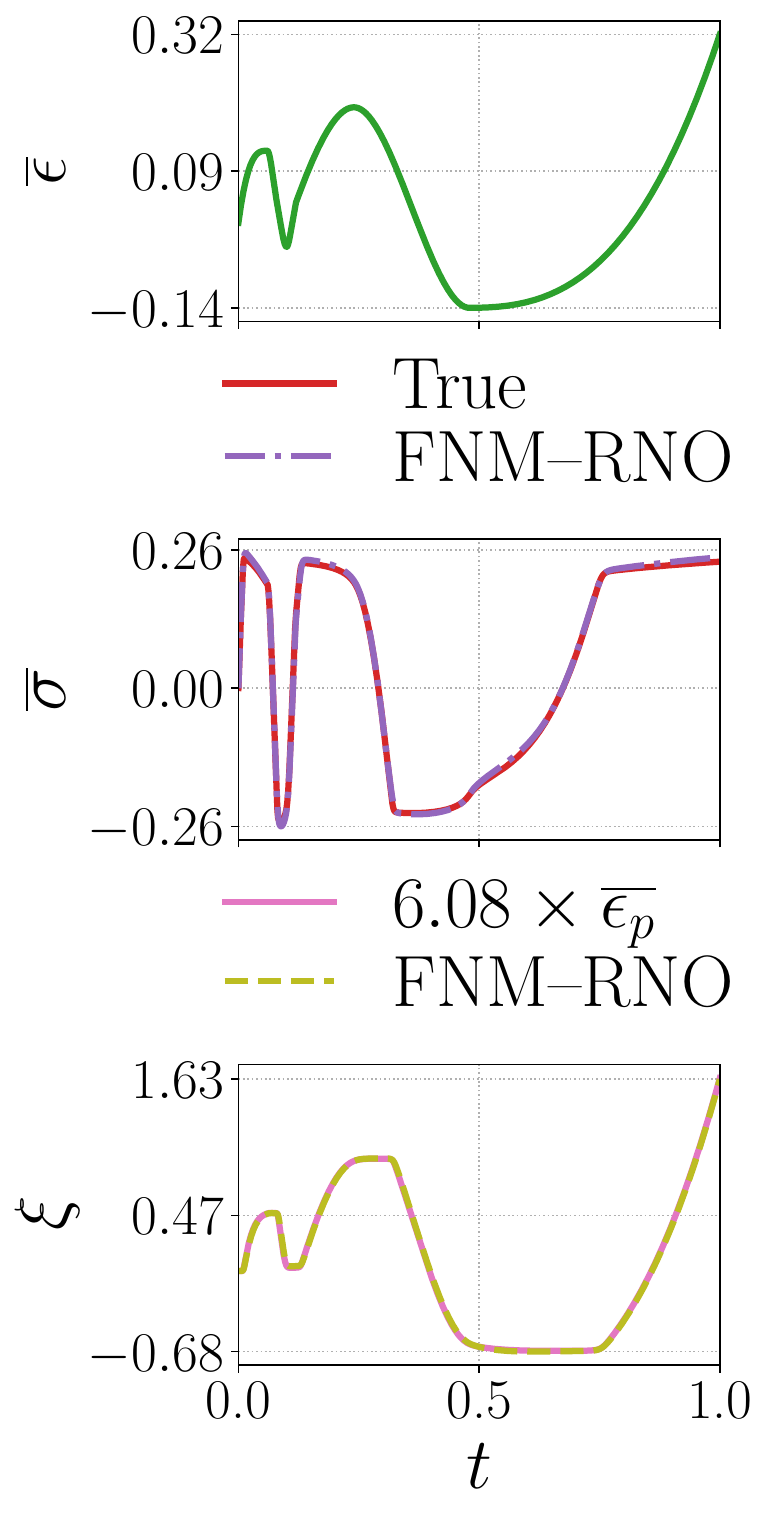}
    \end{tabular}
    \addtolength{\tabcolsep}{6pt}
    }
    \caption{Visualization of testing samples and FNM--RNO predictions (averaged stress $\sigmabar$ and internal variables $\xi$) with the largest and median relative $L^2$ error in stress response for elasto-viscoplasticity. We visualize the internal variable $\xi$ (\textit{dashed line}) along with the plastic strain $\overline{\epsilon}_p$ scaled by a constant (\textit{solid line}), where the constant is determined through minimizing the $L^2$ distance between the internal variable and the scaled averaged plastic strain.}
    \label{fig:testing_visual_EVP}
\end{figure}

\section{Conclusion}
In this paper, we present a novel recurrent neural operator architecture capable of predicting the memory-dependent constitutive laws of homogenized multiscale materials over a wide array of microstructures. Our architecture is designed as a neural differential equation with a Fourier neural mapping on the right-hand side, making it agnostic to the level of discretization or sampling of the material microstructure. Guided by the theory of homogenization in the one-dimensional Kelvin--Voigt model, we derive Lipschitz properties of the cell problem and, to our knowledge, present the first universal approximation guarantees of a data-driven model for predicting a viscoelastic constitutive law as a function of the microstructure. Our numerical experiments confirm, in the context of the multiscale Kelvin--Voigt linear viscoelastic model, that the neural operator accurately predicts the homogenized dynamics of the material and is able to generalize from training on piecewise constant to testing on continuous microstructures. We also show that our architecture can be applied beyond the specifics of linear viscoelasticity: we demonstrate its efficacy in learning the nonlinear constitutive model for homogenized viscoplastic materials.

Our objective in this work was to introduce a novel data-driven modeling technique for predicting microstructure-dependent constitutive laws, and to study this approach in the well-defined setting of one-dimensional Kelvin--Voigt materials where theoretical guarantees could be established. For future work, we aim to apply this idea of combining neural ODEs and mesh invariant neural operators to the simulation of 2D and 3D homogenized materials in viscoelasticity and viscoplasticity, which is outside the scope of the current paper. We also aim to understand in what cases memory and fractional derivatives are necessary to model constitutive laws of microstructures in higher dimensions, as this is an open mathematical and experimental problem~\cite{ostoja2018does}. More generally, learning constitutive models that are microstructure dependent now allows us to investigate which microstructure properties lead to these distinct features of homogenized constitutive laws.

\clearpage

\appendix

\section{Equivalence between Cell Problems}\label{sec:cell_problem_equiv}
Here, we show that our original cell problem~\cref{eq:cell_problem} can be derived from the classical cell problem of a viscoelastic material~\cite{bhattacharya2023learning, pavliotis2008multiscale} with a periodic boundary when the microstructure $E$ and $\nu$ are periodic functions. The classical periodic cell problem is given by
\begin{subequations}\label{eq:periodic_cell_problem}
    \begin{align}
        -\p_y\big((E(y) + s\nu(y))\p_y\chi(y)\big) &= \p_y(E(y) + s\nu(y)), &\quad y &\in \Omega, \\
         \chi \text{ is 1-periodic}, \quad \int_{\Omega} \chi(y)dy &= 0.
    \end{align}
\end{subequations}
where $s \in \R$ is a Laplace variable which the solution $\chi$ will depend on. Hence, we can write the solution to this cell problem more explicitly as $\chi(y, s)$. Now take any forcing trajectory $\epsbar(t)$ and take its Laplace transform $\widehat{\epsbar}(s)$. Then we can define $\widehat{u}(y, s) = \widehat{\epsbar}(s)(\chi(y, s) + y)$ and rewrite the cell problem above as
\begin{subequations}
    \begin{align}
        -\p_y\widehat{\sigma}(y, s) &= 0, &\quad y &\in \Omega,\\
        \widehat{\sigma}(y, s) &= (E(y) + s\nu(y))\p_y\widehat{u}(y, s), &\quad y &\in \Omega,\\
        \widehat{u}(1, s) &= \widehat{u}(0, s)+\widehat{\epsbar}(s),\\
        \p_y \widehat{u}(1, s) &= \p_y \widehat{u}(0, s),\\
        \int_{\Omega} \widehat{u}(y, s)dy &= \frac{\widehat{\epsbar}(s)}{2}.
    \end{align}
\end{subequations}
Now converting back into the time domain from the Laplace domain for $t \in \cT := [0, T]$ we have
\begin{subequations}
    \begin{align}
        -\p_y\sigma(y, t) &= 0, &\quad (y, t) &\in \Omega \times \cT,\\
        \sigma(y, t) &= E(y)\p_yu(y, t) + \nu(y)\p_{yt}u(y, t), &\quad (y, t) &\in \Omega \times \cT,\\
        u(1, t) &= u(0, t)+\epsbar(t), &t&\in\cT,\\
        \p_y u(1, t) &= \p_y u(0, t), &t&\in\cT,\\
        \int_{\Omega} u(y, t)dy &= \frac{\epsbar(t)}{2}, &t&\in\cT.
    \end{align}
\end{subequations}
Omitting temporarily the last integral constraint, the solution $u(y, t)$ of the PDE above still remains a valid solution for $u(y, t) + c(t)$ for any trajectory $c(t) \in \R$. Hence, we can shift our solution $u(y, t) \to u(y, t) - u(0, t)$ and it will now satisfy the Dirichlet problem
\begin{subequations}
    \begin{align}
        -\p_y\sigma(y, t) &= 0, &\quad (y, t) &\in \Omega \times \cT,\\
        \sigma(y, t) &= E(y)\p_yu(y, t) + \nu(y)\p_{yt}u(y, t), &\quad (y, t) &\in \Omega \times \cT,\\
        u(0, t) &= 0, \quad u(1, t) = \epsbar(t), &t&\in\cT,\\
        \p_y u(1, t) &= \p_y u(0, t), &t&\in\cT,\\
        u(y, 0) &= 0, &y&\in\Omega
    \end{align}
\end{subequations}
which is precisely the cell problem~\cref{eq:cell_problem} we began our discussion from in \cref{subsec:KV_homog}.

\section{Lipschitz Continuity Proofs}\label{sec:Lip}
This section proves several bounds on the solution of the cell problem which we rely on in the proofs of \cref{lem:u1_u2,lem:sigma1_sigma2}. The proof of these bounds follow the proof of Lemma 3.5 in~\cite{bhattacharya2023learning} with the difference that these bounds depend on the minimum and maximum value of the cell problem microstructure $E_{\min}, E_{\max}, \nu_{\min}$, $\nu_{\max}$ \textit{as well as} the maximum absolute value of the strain input and its derivative $\epsbar_{\max}, \dot{\epsbar}_{\max}$. This allows us to uniformly control the variation in cell problem solutions across a class of microstructures and strain inputs. Also, our result derives Lipschitz bounds on the solution of the cell problem whereas Lemma 3.5 in~\cite{bhattacharya2023learning} works at the level of the original multiscale problem which does not suit our purposes here.

\begin{proposition}\label{prop:periodic_bounds}
    Under \cref{assump:E_nu_eps}, for the solutions $p$ of the periodic cell problem~\cref{eq:detrended_cell_problem}, the following bounds hold
    \begin{enumerate}[label=(\alph*)]
        \item $\sup_{t \in \mathcal{T}}\|p\|_{H_0^1, \nu} \leq \cfrac{\nu_{\max}(\dot{\epsbar}_{\max}\nu_{\max} + \epsbar_{\max}E_{\max})}{E_{\min}\sqrt{\nu_{\min}}}$.
        \item $\sup_{t \in \mathcal{T}}\|\p_tp\|_{H_0^1, \nu} \leq \cfrac{(E_{\max}\nu_{\max} + E_{\min}\nu_{\min})(\dot{\epsbar}_{\max}\nu_{\max} + \epsbar_{\max}E_{\max})}{E_{\min}\nu_{\min}^\frac{3}{2}}$.
    \end{enumerate}
\end{proposition}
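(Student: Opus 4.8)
The plan is to work entirely with the detrended cell problem \eqref{eq:detrended_cell_problem} in weak form, which is the natural object here since the right-hand side $-\dot{\epsbar}\p_y\nu - \epsbar\p_yE$ only makes sense distributionally when $E,\nu\in\BV$. Integrating by parts twice against a test function $\varphi\in H_0^1(\Omega)$ (whose vanishing at the endpoints kills every boundary term) gives
\begin{equation*}
  q_\nu(\p_tp,\varphi) + q_E(p,\varphi) = -\dot{\epsbar}(t)\langle\nu,\p_y\varphi\rangle - \epsbar(t)\langle E,\p_y\varphi\rangle, \qquad \varphi\in H_0^1(\Omega),\ t\in\cT,
\end{equation*}
where now $E,\nu$ enter only through their $L^\infty$ bounds. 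Write $D:=\dot{\epsbar}_{\max}\nu_{\max}+\epsbar_{\max}E_{\max}$; by Cauchy--Schwarz and $|\Omega|=1$, the right-hand side is bounded in absolute value by $D\|\p_y\varphi\|_{L^2}=D\|\varphi\|_{H_0^1}$, and Lemma~\ref{lem:norm_equiv} (with $\zeta=\mathds{1}$, $\xi=\nu$) gives $\|\varphi\|_{H_0^1}\le\nu_{\min}^{-1/2}\|\varphi\|_{H_0^1,\nu}$.

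For part (a) I take $\varphi=p(\cdot,t)$. The first term is $q_\nu(\p_tp,p)=\tfrac12\tfrac{d}{dt}\|p\|_{H_0^1,\nu}^2$ since $\nu$ is time-independent, and for the second I use Lemma~\ref{lem:norm_equiv} in the other direction, $q_E(p,p)=\|p\|_{H_0^1,E}^2\ge\tfrac{E_{\min}}{\nu_{\max}}\|p\|_{H_0^1,\nu}^2$. This dissipative lower bound is exactly what makes the estimate uniform in $T$; discarding it would only yield the weaker $\mathcal{O}(T)$ bound. Setting $h(t)=\|p(\cdot,t)\|_{H_0^1,\nu}^2$ I obtain the differential inequality $\tfrac12 h' + \tfrac{E_{\min}}{\nu_{\max}}h \le \tfrac{D}{\sqrt{\nu_{\min}}}\sqrt{h}$, with $h(0)=0$ because $p(\cdot,0)=0$. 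A short barrier argument (if $h$ ever exceeded $\big(\tfrac{\nu_{\max}D}{E_{\min}\sqrt{\nu_{\min}}}\big)^2$ then $h'<0$ there, contradicting $h(0)=0$ and continuity) gives $\sup_{t\in\cT}\|p\|_{H_0^1,\nu}\le\tfrac{\nu_{\max}(\dot{\epsbar}_{\max}\nu_{\max}+\epsbar_{\max}E_{\max})}{E_{\min}\sqrt{\nu_{\min}}}$, which is (a).

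For part (b) I instead take $\varphi=\p_tp(\cdot,t)$, which yields the \emph{pointwise-in-$t$} identity $\|\p_tp\|_{H_0^1,\nu}^2 = -q_E(p,\p_tp) - \dot{\epsbar}\langle\nu,\p_{yt}p\rangle - \epsbar\langle E,\p_{yt}p\rangle$; note that no time-integration is needed, which is the key structural point. Every term on the right is linear in $\|\p_tp\|_{H_0^1}$: Cauchy--Schwarz gives $|q_E(p,\p_tp)|\le E_{\max}\|p\|_{H_0^1}\|\p_tp\|_{H_0^1}$ and $|\dot{\epsbar}\langle\nu,\p_{yt}p\rangle|+|\epsbar\langle E,\p_{yt}p\rangle|\le D\|\p_tp\|_{H_0^1}$. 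Converting $H_0^1$ norms to weighted ones via Lemma~\ref{lem:norm_equiv} and inserting bound (a) for $\|p\|_{H_0^1,\nu}$, I arrive at $\|\p_tp\|_{H_0^1,\nu}^2\le\big(\tfrac{E_{\max}}{\nu_{\min}}\cdot\tfrac{\nu_{\max}D}{E_{\min}\sqrt{\nu_{\min}}}+\tfrac{D}{\sqrt{\nu_{\min}}}\big)\|\p_tp\|_{H_0^1,\nu}$; dividing through and combining over the common denominator $E_{\min}\nu_{\min}^{3/2}$ gives exactly (b).

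The routine parts are the two integrations by parts and the bookkeeping with Lemma~\ref{lem:norm_equiv}; the one place that needs genuine care is the uniform-in-$T$ conclusion of (a), where the dissipative term $q_E(p,p)$ must be retained and the differential inequality handled via a barrier (or equivalently an integrating-factor/Gronwall) argument rather than naive integration of $(\sqrt{h})'$.
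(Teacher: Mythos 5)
Your proof is correct and essentially the same as the paper's: both derive the same weak identity, test with $\varphi=p$ and $\varphi=\p_t p$ respectively, and exploit the dissipative term $q_E(p,p)$ together with Lemma~\ref{lem:norm_equiv} to get $T$-independent bounds. The only cosmetic difference is in closing part (a): you handle the ODE inequality $\tfrac12 h'+\tfrac{E_{\min}}{\nu_{\max}}h\le \tfrac{D}{\sqrt{\nu_{\min}}}\sqrt{h}$ with a barrier argument, whereas the paper absorbs the $\sqrt{h}$ term via Young's inequality with $\delta^2=E_{\min}/\nu_{\max}$ and then applies Gronwall; both yield the identical constant.
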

\begin{proof}
    We show the first bound by choosing a test function $\varphi = p$ and writing the weak form of our periodic cell problem~\cref{eq:wf} as
    \begin{equation}
        q_\nu(\p_t p, p) + q_E(p, p) = -\dot{\epsbar}(t)\langle\nu, \p_y p\rangle - \epsbar(t)\langle E, \p_y p\rangle
    \end{equation}
    and hence by Cauchy-Schwarz and the definition of the weighted $H^1_0$ norm in~\cref{eq:H01_weighted} we get
    \begin{align*}
        \frac{1}{2}\frac{\d}{\dt}\|p\|_{H_0^1, \nu}^2 + \|p\|_{H_0^1, E}^2 \leq \Big(|\dot{\epsbar}(t)|\|\nu\|_{L^2} + |\epsbar(t)|\|E\|_{L^2}\Big)\|p\|_{H_0^1} \leq C\|p\|_{H_0^1}
    \end{align*}
    for the constant $C = \dot{\epsbar}_{\max}\nu_{\max} + \epsbar_{\max}E_{\max}$. Applying \cref{lem:norm_equiv} we have
    \begin{align*}
        \frac{\d}{\dt}\|p\|_{H_0^1, \nu}^2 + 2\frac{E_{\min}}{\nu_{\max}}\|p\|_{H_0^1, \nu}^2 \leq 2\frac{C}{\sqrt{\nu_{\min}}}\|p\|_{H_0^1, \nu}
    \end{align*}
    which by Young's inequality for $\delta > 0$ gives
    \begin{align*}
        \frac{\d}{\dt}\|p\|_{H_0^1, \nu}^2 + 2\frac{E_{\min}}{\nu_{\max}}\|p\|_{H_0^1, \nu}^2 \leq \frac{C^2}{\nu_{\min}\delta^2} + \delta^2\|p\|_{H_0^1, \nu}^2.
    \end{align*}
    Setting $\delta^2 = \frac{E_{\min}}{\nu_{\max}}$ gives us
    \begin{align*}
        \frac{\d}{\dt}\|p\|_{H_0^1, \nu}^2 + \frac{E_{\min}}{\nu_{\max}}\|p\|_{H_0^1, \nu}^2 \leq \frac{C^2\nu_{\max}}{E_{\min}\nu_{\min}}
    \end{align*}
    which by Gronwall's inequality yields
    \begin{equation}\label{eq:p_bound}
        \sup_{t \in \mathcal{T}}\|p\|_{H_0^1, \nu}^2 \leq \Big(\frac{\nu_{\max}}{E_{\min}}\Big)^2\frac{C^2}{\nu_{\min}} = \Big(\frac{\nu_{\max}(\dot{\epsbar}_{\max}\nu_{\max} + \epsbar_{\max}E_{\max})}{E_{\min}\sqrt{\nu_{\min}}}\Big)^2,
    \end{equation}
    so the first bound is proved.

    To prove the second bound, we take a test function $\varphi = \p_tp \in H_0^1(\Omega)$ and write the weak form of the periodic cell problem
    \begin{equation}
        q_\nu(\p_tp, \p_tp) + q_E(\p_tp, p) = -\dot{\epsbar}(t)\langle\nu, \p_{yt}p\rangle - \epsbar(t)\langle E, \p_{yt}p\rangle
    \end{equation}
    which by Cauchy--Schwarz gives us
    \begin{align*}
        \|\p_tp\|_{H_0^1, \nu}^2 \leq \|p\|_{H_0^1, E}\|\p_tp\|_{H_0^1, E} + C\|\p_tp\|_{H_0^1}
    \end{align*}
    for the same constant $C = \dot{\epsbar}_{\max}\nu_{\max} + \epsbar_{\max}E_{\max}$. Then by \cref{lem:norm_equiv} we get
    \begin{align*}
        \|\p_tp\|_{H_0^1, \nu}^2 \leq \Big(\frac{E_{\max}}{\nu_{\min}}\|p\|_{H_0^1, \nu} + \frac{C}{\sqrt{\nu_{\min}}}\Big)\|\p_tp\|_{H_0^1, \nu}
    \end{align*}
    and therefore using our first bound on $\|p\|_{H_0^1, \nu}^2$ in~\cref{eq:p_bound} we get
    \begin{equation*}
        \|\p_tp\|_{H_0^1, \nu} \leq (\dot{\epsbar}_{\max}\nu_{\max} + \epsbar_{\max}E_{\max})\left(\frac{E_{\max}\nu_{\max}}{E_{\min}\nu_{\min}^{3/2}} + \frac{1}{\sqrt{\nu_{\min}}}\right).
    \end{equation*}
\end{proof}

Now recall that a solution $u(y, t)$ to the original cell problem~\cref{eq:cell_problem} can be decomposed into its periodic and nonperiodic parts as $u(y, t) = p(y, t) + \epsbar(t)y$ where $p$ is a solution to the periodic cell problem~\cref{eq:detrended_cell_problem}. This allows us to bound by the triangle inequality and \cref{lem:norm_equiv}
\begin{equation}
\begin{gathered}
    \sup_{t \in \mathcal{T}}\|u\|_{H_0^1, \nu} \leq \sup_{t \in \mathcal{T}}\|p\|_{H_0^1, \nu} + \nu_{\max}\sup_{t \in \mathcal{T}}|\epsbar(t)|,\\
    \sup_{t \in \mathcal{T}}\|\p_tu\|_{H_0^1, \nu} \leq \sup_{t \in \mathcal{T}}\|\p_tp\|_{H_0^1, \nu} + \nu_{\max}\sup_{t \in \mathcal{T}}|\dot{\epsbar}(t)|.
\end{gathered}
\end{equation}
Combining these bounds with \cref{prop:periodic_bounds} immediately leads to the following corollary.
\begin{corollary}\label{cor:u_bounds}
    Under \cref{assump:E_nu_eps}, for the solution $u$ of the cell\linebreak problem~\cref{eq:cell_problem}, the following bounds hold
    \begin{enumerate}[label=(\alph*)]
        \item $\sup_{t \in \mathcal{T}}\|u\|_{H_0^1, \nu} \leq \cfrac{\nu_{\max}(\dot{\epsbar}_{\max}\nu_{\max} + \epsbar_{\max}E_{\max})}{E_{\min}\nu_{\min}} + \nu_{\max}\epsbar_{\max}$
        \item $\sup_{t \in \mathcal{T}}\|\p_tu\|_{H_0^1, \nu} \leq \tfrac{(E_{\max}\nu_{\max} + E_{\min}\nu_{\min})(\dot{\epsbar}_{\max}\nu_{\max} + \epsbar_{\max}E_{\max})}{E_{\min}\nu_{\min}^\frac{3}{2}} + \nu_{\max}\dot{\epsbar}_{\max}.$
    \end{enumerate}
\end{corollary}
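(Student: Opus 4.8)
The plan is to deduce the two estimates directly from Proposition~\ref{prop:periodic_bounds} together with the decomposition \eqref{eq:periodic_decomp}, $u(y,t) = p(y,t) + \epsbar(t)\,y$, in which $p$ solves the detrended cell problem \eqref{eq:detrended_cell_problem}. Two elementary observations drive the argument. First, the linear trend $y\mapsto\epsbar(t)\,y$ has constant spatial derivative $\epsbar(t)$, so $\|\epsbar(t)\,y\|_{H_0^1,\nu}^2 = \epsbar(t)^2\int_\Omega \nu(y)\,\dy \le \nu_{\max}\,\epsbar(t)^2$; second, time differentiation commutes with the decomposition, $\p_t u = \p_t p + \dot{\epsbar}(t)\,y$, so the same computation controls the trend part of $\p_t u$. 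These are precisely the facts underlying the triangle-inequality estimate displayed immediately before the statement of the corollary.

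Concretely I would proceed as follows. (i) Verify that $p := u - \epsbar(t)\,y$ is an admissible solution of \eqref{eq:detrended_cell_problem} with the regularity required by Proposition~\ref{prop:periodic_bounds}: the homogeneous Dirichlet conditions $p(0,t)=p(1,t)=0$ come from $u(0,t)=0$ and $u(1,t)=\epsbar(t)$, and $p(y,0)=0$ comes from $u(y,0)=0$ together with $\epsbar(0)=0$, while the $C^1\big(\cT;H_0^1(\Omega)\big)$ regularity carries over from that of $u$. (ii) Apply the triangle inequality for the norm induced by $q_\nu$, together with the elementary bounds above, to obtain, for every $t\in\cT$, $\|u(\cdot,t)\|_{H_0^1,\nu} \le \|p(\cdot,t)\|_{H_0^1,\nu} + \sqrt{\nu_{\max}}\,|\epsbar(t)|$ and $\|\p_t u(\cdot,t)\|_{H_0^1,\nu} \le \|\p_t p(\cdot,t)\|_{H_0^1,\nu} + \sqrt{\nu_{\max}}\,|\dot{\epsbar}(t)|$. (iii) Take $\esssup$ over $t\in\cT$, bound $|\epsbar(t)|\le\epsbar_{\max}$ and $|\dot{\epsbar}(t)|\le\dot{\epsbar}_{\max}$ by Assumptions~\ref{assump:E_nu_eps}, and insert the bounds of Proposition~\ref{prop:periodic_bounds}(a) and (b) for $\sup_t\|p\|_{H_0^1,\nu}$ and $\sup_t\|\p_t p\|_{H_0^1,\nu}$; up to absorbing numerical constants this reproduces exactly parts (a) and (b) of the corollary.

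I do not expect any genuine obstacle. All of the analytic weight sits in Proposition~\ref{prop:periodic_bounds}, whose proof is the energy method: testing the weak form of \eqref{eq:detrended_cell_problem} against $p$ and then against $\p_t p$, combined with Young's inequality and Gronwall's lemma, and using the norm equivalence of Lemma~\ref{lem:norm_equiv} to pass between the $\nu$-weighted, $E$-weighted, and unweighted $H_0^1$ norms. The present corollary is then pure bookkeeping; the only point deserving a moment's care is the routine check in step (i) that adding the linear trend does not violate the hypotheses of that proposition.
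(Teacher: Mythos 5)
Your approach is identical to the paper's: decompose $u(y,t) = p(y,t) + \epsbar(t)\,y$, apply the triangle inequality for $\|\cdot\|_{H_0^1,\nu}$, and insert the bounds from Proposition~\ref{prop:periodic_bounds}. One small point of interest: your computation $\|\epsbar(t)y\|_{H_0^1,\nu}^2 = \epsbar(t)^2\int_\Omega \nu\,\dy \leq \nu_{\max}\,\epsbar(t)^2$, giving trend-term coefficient $\sqrt{\nu_{\max}}$, is correct and in fact sharper than the coefficient $\nu_{\max}$ appearing in the paper's intermediate display and in the corollary as stated. So what you obtain is a slightly tighter bound; the stated form with $\nu_{\max}$ follows from yours only when $\nu_{\max}\geq 1$, and appears to be a slack (or a small slip) in the paper rather than a defect in your argument. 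Note, though, that $\sqrt{\nu_{\max}}$ versus $\nu_{\max}$ is not a matter of ``absorbing a numerical constant'' — the two differ multiplicatively by a factor depending on $\nu_{\max}$ — so it is worth stating the coefficient you actually derive rather than waving it away.
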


\begin{proposition}\label{prop:gamma_bounds}
    Under \cref{assump:E_nu_eps}, for all solutions $\gamma$ of~\cref{eq:gamma_weak}, the following bounds hold
    \begin{enumerate}[label=(\alph*)]
        \item $\sup_{t \in \mathcal{T}}\|\gamma\|_{H_0^1, \nu_1} \leq \cfrac{\nu_{\max}}{E_{\min}\sqrt{\nu_{\min}}}\|g\|_\cZ$
        \item $\sup_{t \in \mathcal{T}}\|\p_t\gamma\|_{H_0^1, \nu_1} \leq \cfrac{E_{\max}\nu_{\max} + E_{\min}\nu_{\min}}{E_{\min}\nu_{\min}^\frac{3}{2}}\|g\|_\cZ$.
    \end{enumerate}
\end{proposition}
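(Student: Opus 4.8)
The plan is to run pointwise-in-$t$ energy estimates directly on the weak form \eqref{eq:gamma_weak}, in the same spirit as the proof of Proposition~\ref{prop:periodic_bounds}. The crucial regularity input is that $\gamma = u_1 - u_2 \in C^1(\cT; H_0^1(\Omega))$, inherited from the weak formulation \eqref{eq:wf}, so that for every $t \in \cT$ both $\gamma(\cdot,t)$ and $\p_t\gamma(\cdot,t)$ are admissible test functions in \eqref{eq:gamma_weak}, and moreover $\|g(\cdot,t)\|_{L^2} \le \|g\|_\cZ$. Throughout, I would freely convert $E_1$-weighted and unweighted $H_0^1$-norms into $\nu_1$-weighted ones via Lemma~\ref{lem:norm_equiv}, absorbing the phase bounds $E_{\min} \le E_1 \le E_{\max}$, $\nu_{\min} \le \nu_1 \le \nu_{\max}$ into the constants.

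For part (a) I would test \eqref{eq:gamma_weak} against $\varphi = \gamma$. The first term becomes $\frac{1}{2}\frac{d}{dt}\|\gamma\|_{H_0^1,\nu_1}^2$, the second is $\|\gamma\|_{H_0^1,E_1}^2 \ge 0$, and Cauchy--Schwarz bounds the right-hand side by $\|g\|_\cZ\|\gamma\|_{H_0^1}$. Using $\|\gamma\|_{H_0^1,E_1}^2 \ge \frac{E_{\min}}{\nu_{\max}}\|\gamma\|_{H_0^1,\nu_1}^2$ and $\|\gamma\|_{H_0^1} \le \frac{1}{\sqrt{\nu_{\min}}}\|\gamma\|_{H_0^1,\nu_1}$ from Lemma~\ref{lem:norm_equiv}, then Young's inequality with parameter $\delta^2 = E_{\min}/\nu_{\max}$, I obtain the differential inequality $\frac{d}{dt}y + \frac{E_{\min}}{\nu_{\max}}y \le \frac{\nu_{\max}}{E_{\min}\nu_{\min}}\|g\|_\cZ^2$ for $y(t) = \|\gamma\|_{H_0^1,\nu_1}^2$ with $y(0) = 0$; Gronwall then gives $\sup_t y \le \frac{\nu_{\max}^2}{E_{\min}^2\nu_{\min}}\|g\|_\cZ^2$, and a square root yields (a).

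For part (b) I would instead test \eqref{eq:gamma_weak} against $\varphi = \p_t\gamma$, which keeps the top-order term $\|\p_t\gamma\|_{H_0^1,\nu_1}^2$ coercive without requiring any time regularity of $g$. Moving $q_{E_1}(\gamma,\p_t\gamma)$ and the source term $\langle g, \p_{yt}\gamma\rangle$ to the right and bounding them by Cauchy--Schwarz (in the $q_{E_1}$ inner product, and in $L^2$, respectively) gives $\|\p_t\gamma\|_{H_0^1,\nu_1}^2 \le \|\gamma\|_{H_0^1,E_1}\|\p_t\gamma\|_{H_0^1,E_1} + \|g\|_\cZ\|\p_t\gamma\|_{H_0^1}$; applying Lemma~\ref{lem:norm_equiv} to each factor and dividing through by $\|\p_t\gamma\|_{H_0^1,\nu_1}$ leaves $\|\p_t\gamma\|_{H_0^1,\nu_1} \le \frac{E_{\max}}{\nu_{\min}}\|\gamma\|_{H_0^1,\nu_1} + \frac{1}{\sqrt{\nu_{\min}}}\|g\|_\cZ$. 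Substituting the bound from (a) and combining the two terms over the common denominator $E_{\min}\nu_{\min}^{3/2}$ produces exactly the claimed constant $\frac{E_{\max}\nu_{\max} + E_{\min}\nu_{\min}}{E_{\min}\nu_{\min}^{3/2}}$.

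The computations are routine, so there is no real obstacle; the only points requiring care are (i) confirming test-function admissibility, in particular that $\p_t\gamma(\cdot,t) \in H_0^1(\Omega)$, which is precisely what the $C^1$-in-time regularity of the cell-problem solution supplies, and (ii) tracking which direction of the norm-equivalence inequality in Lemma~\ref{lem:norm_equiv} to invoke at each step, since an inequality used in the wrong direction would spoil the assembled constants. The one structural choice worth flagging is testing with $\p_t\gamma$ in (b) rather than differentiating the equation in time, which sidesteps the need for $g$ to be differentiable in $t$.
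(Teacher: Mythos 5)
Your proposal is correct and follows essentially the same route as the paper's own proof: test with $\gamma$ for (a) (Cauchy--Schwarz, norm equivalence via Lemma~\ref{lem:norm_equiv}, Young's inequality, Gronwall with $\gamma(\cdot,0)=0$) and with $\p_t\gamma$ for (b) (Cauchy--Schwarz, norm equivalence, divide by $\|\p_t\gamma\|_{H_0^1,\nu_1}$, then insert the bound from (a)). The only cosmetic difference is the placement of the $\nu_{\min}$ factor when invoking Young's inequality, which changes the stated optimal $\delta^2$ but produces the same constant.
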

\begin{proof}
Choosing the test function $\varphi = \gamma$ which by definition is zero on the boundary $\p\Omega$, we can write the weak form of the PDE~\cref{eq:gamma_weak} as
\begin{align*}
    q_{\nu_1}(\p_t\gamma, \gamma) + q_{E_1}(\gamma, \gamma) = -\langle g, \p_y\gamma\rangle.
\end{align*}
Now using Cauchy-Schwarz we can write
\begin{align*}
    \frac{1}{2}\frac{\d}{\dt}\|\gamma\|_{H_0^1, \nu_1}^2 + \|\gamma\|_{H_0^1, E_1}^2 \leq \|g\|\|\gamma\|_{H_0^1}
\end{align*}
and applying \cref{lem:norm_equiv} gives us
\begin{align*}
    \frac{1}{2}\frac{\d}{\dt}\|\gamma\|_{H_0^1, \nu_1}^2 + \frac{E_{\min}}{\nu_{\max}}\|\gamma\|_{H_0^1, \nu_1}^2 \leq \frac{1}{\sqrt{\nu_{\min}}}\|g\|\|\gamma\|_{H_0^1, \nu_1}
\end{align*}
and applying Young's inequality for any $\delta > 0$ gives us
\begin{align*}
    \frac{1}{2}\frac{\d}{\dt}\|\gamma\|_{H_0^1, \nu_1}^2 + \frac{E_{\min}}{\nu_{\max}}\|\gamma\|_{H_0^1, \nu_1}^2 \leq \frac{1}{2\delta^2}\|g\|^2 + \frac{\delta^2}{2\nu_{\min}}\|\gamma\|_{H_0^1, \nu_1}.
\end{align*}
Now setting $\delta^2 = \frac{E_{\min}\nu_{\min}}{\nu_{\max}}$ we have
\begin{align*}
    \frac{\d}{\dt}\|\gamma\|_{H_0^1, \nu_1}^2 + \frac{E_{\min}}{\nu_{\max}}\|\gamma\|_{H_0^1, \nu_1}^2 \leq \frac{\nu_{\max}}{E_{\min}\nu_{\min}}\|g\|_\cZ^2
\end{align*}
Note that $\gamma(y, 0) = 0$ since $u_1(y, 0) = u_2(y, 0) = 0$. Hence, by Gronwall's inequality we get that
\begin{equation}\label{eq:gamma_bound}
    \sup_{t \in \mathcal{T}}\|\gamma\|_{H_0^1, \nu_1}^2 \leq \Big(\frac{\nu_{\max}}{E_{\min}}\Big)^2\frac{1}{\nu_{\min}}\|g\|_\cZ^2
\end{equation}
which proves the first bound.

To prove the second bound, we substitute $\varphi = \p_t\gamma$ into the weak form of the PDE to get
\begin{align*}
    q_{\nu_1}(\p_t\gamma, \p_t\gamma) + q_{E_1}(\gamma, \p_t\gamma) = -\langle g, \p_{yt}\gamma\rangle.
\end{align*}
Now rearranging terms and using Cauchy-Schwarz we can write
\begin{align*}
    \|\p_t\gamma\|_{H_0^1, \nu_1}^2 \leq \|g\|\|\p_t\gamma\|_{H_0^1} + \|\gamma\|_{H_0^1, E_1}\|\p_t\gamma\|_{H_0^1, E_1}.
\end{align*}
Applying \cref{lem:norm_equiv} gives us
\begin{align*}
    \|\p_t\gamma\|_{H_0^1, \nu_1}^2 \leq \frac{1}{\sqrt{\nu_{\min}}}\|g\|\|\p_t\gamma\|_{H_0^1, \nu_1} + \frac{E_{\max}}{\nu_{\min}}\|\gamma\|_{H_0^1, \nu_1}\|\p_t\gamma\|_{H_0^1, \nu_1}.
\end{align*}
Dividing by $\|\p_t\gamma\|_{H_0^1, \nu_1}$ on both sides we get
\begin{align*}
    \|\p_t\gamma\|_{H_0^1, \nu_1} \leq \frac{1}{\sqrt{\nu_{\min}}}\|g\| + \frac{E_{\max}}{\nu_{\min}}\|\gamma\|_{H_0^1, \nu_1}
\end{align*}
and applying our first bound in~\cref{eq:gamma_bound} gives us
\begin{equation}
    \|\p_t\gamma\|_{H_0^1, \nu_1} \leq \Big(\frac{E_{\max}\nu_{\max}}{E_{\min}\nu_{\min}^\frac{3}{2}} + \frac{1}{\sqrt{\nu_{\min}}}\Big)\|g\|_\cZ.
\end{equation}
\end{proof}

\section{Approximation Proofs}
\label{sec:UA}

This lemma is used to show, in one dimension, that functions that are integrable and of bounded variation are approximable by piecewise constants. This is a classical and well-known result which we derive here for convenience in the one dimensional setting of our theory, and can be extended to higher dimensions by using the equivalence between functions of bounded variation and Lipschitz-1 functions~\cite[Lemma C.1]{bhattacharya2024learning}.
\begin{lemma}\label{lem:bv_l1_approx}
    For the domain $\Omega = [0, 1]$  take any integrable function of bounded variation $f \in L^1(\Omega) \cap \BV(\Omega)$. Then defining a grid $y_i = \frac{i}{M}$ for $i \in \{0, \hdots, M\}$ there exists a piecewise constant function $f: [0, 1] \to \mathbb{R}$ with $L$ pieces given by
    \begin{equation}
        f_{\pc}(y) = f_{\pc}^{i} := M\int_{y_i}^{y_{i+1}}f(z)\dz, \quad y \in [y_i, y_{i+1}]
    \end{equation}
    satisfying the approximation bound
    \begin{equation}
        \|f - f_{\pc}\|_{L^1} \leq \frac{|f|_{\BV}}{M}.
    \end{equation}
\end{lemma}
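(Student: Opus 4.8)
The plan is to reduce the global $L^1$ bound to a sum of local bounds, one for each grid cell $I_i=[y_i,y_{i+1}]$ of length $1/M$, on which $f_{\pc}$ equals the constant $f_{\pc}^i=M\int_{I_i}f$, i.e. the average of $f$ over $I_i$. The starting identity is that, since $M|I_i|=1$, for (a.e.) $y\in I_i$ one has $f(y)-f_{\pc}^i=M\int_{I_i}\bigl(f(y)-f(z)\bigr)\dz$, and hence
\[
  \int_{I_i}|f(y)-f_{\pc}^i|\dy \;\le\; M\int_{I_i}\!\!\int_{I_i}|f(y)-f(z)|\dz\dy .
\]

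The key pointwise input is that, for a function of bounded variation (working with a good, say right-continuous, representative), $|f(y)-f(z)|\le \|f\|_{\TV([\min(y,z),\max(y,z)])}\le \|f\|_{\TV(I_i)}$ for every $y,z\in I_i$, hence for a.e.\ pair. Substituting this bound into the double integral and using $|I_i|=1/M$ gives the local estimate
\[
  \int_{I_i}|f(y)-f_{\pc}^i|\dy \;\le\; M\,|I_i|^2\,\|f\|_{\TV(I_i)} \;=\; \frac{1}{M}\,\|f\|_{\TV(I_i)} .
\]
Summing over $i=0,\dots,M-1$ and invoking additivity of total variation over adjacent intervals, $\sum_{i=0}^{M-1}\|f\|_{\TV(I_i)}=\|f\|_{\TV([0,1])}=\|f\|_{\TV}$, yields $\|f-f_{\pc}\|_{L^1}\le \|f\|_{\TV}/M$, as claimed.

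The only genuine subtlety — standard, but worth stating carefully — is the measure-theoretic bookkeeping for $\BV$ functions: one must pass to a good representative (equivalently, work with the essential variation and essential oscillation on each cell) so that the inequality $|f(y)-f(z)|\le\|f\|_{\TV(I_i)}$ holds everywhere, noting that modifying $f$ on a null set changes neither $\|f-f_{\pc}\|_{L^1}$ nor the cell averages $f_{\pc}^i$. An equivalent packaging of the same computation bounds $|f(y)-f_{\pc}^i|$ directly by the oscillation $\mathrm{osc}_{I_i}f$ — since the average $f_{\pc}^i$ lies between the essential infimum and supremum of $f$ over $I_i$ — and then uses $\mathrm{osc}_{I_i}f\le\|f\|_{\TV(I_i)}$ together with the same additivity of the variation; this avoids the double integral entirely and gives the bound with the same constant.
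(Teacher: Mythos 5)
Your argument is correct and essentially matches the paper's: both reduce the global $L^1$ error to a sum of cell‑wise oscillations (you phrase this via the average / double‑integral identity, the paper via $\sup_{y}|f(y)-f_{\pc}^i|\le \mathrm{osc}_{I_i}f$), and both then sum the oscillations against the total variation. The only cosmetic difference is the last step: you invoke additivity of variation over adjacent subintervals, whereas the paper assembles the oscillation‑attaining points from all cells into a single partition with $2M$ points and appeals directly to the definition of $\|\cdot\|_{\TV}$; these are interchangeable, and your remark about working with a good representative is a legitimate (and appropriately flagged) piece of bookkeeping that the paper leaves implicit.
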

\begin{proof}
    We begin by noting that
    \begin{align*}
        \|f - f_{\pc}\|_{L^1} = \sum_{i=0}^{M-1}\int_{y_i}^{y_{i+1}}|f(z) - f_{\pc}^i|\dz.
    \end{align*}
    Studying one of the terms in the sum we can bound
    \begin{align*}
        \sup_{y \in [y_i, y_{i+1}]}|f(y) - f_{\pc}^i| &= \sup_{y \in [y_i, y_{i+1}]}M\Big|\int_{y_i}^{y_{i+1}}(f(y) - f(z))\dz\Big|\\
        &\leq \sup_{y \in [y_i, y_{i+1}]}M\int_{y_i}^{y_{i+1}}|f(y) - f(z)|\dz\\
        &\leq \sup_{y, y' \in [y_i, y_{i+1}]}|f(y) - f(y')|.
    \end{align*}
    Now suppose that for each of the $M$ intervals $[y_i, y_{i+1}]$ we choose two points $y, y' \in [y_i, y_{i+1}]$ and form a partition from the union of all of these points. This partition now has $2M$ points if we also include the endpoints $y = 0, 1$. This implies that
    \begin{align*}
        \|f - f_{\pc}\|_{L^1} &\leq \frac{1}{M}\sum_{i=0}^{M-1}\sup_{y, y' \in [y_i, y_{i+1}]}|f(y) - f(y')|\\
        &\leq \frac{1}{M}\sup\Big\{\sum_{i=0}^{2M-1}|f(z_{i+1}) - f(z_i)| \ \Big| \ 0 = z_0 < z_1 < \hdots < z_{2M} = 1\Big\}\\
        &\leq \frac{|f|_{\BV}}{M}
    \end{align*}
    where the last line is given by the definition of the total variation norm in~\cref{eq:tv_norm}.
\end{proof}

The following two propositions are critical to the RNO approximation result. The first is a general universal approximation result for FNMs with both finite and infinite-dimensional inputs, and the second applies this general result to the constitutive map of interest in this work.

\begin{proposition}\label{prop:UA-FNM}
    Consider a bounded set $K_v \subset \R^{d_{\inn}^{v}}$ and a compact set $K_f \subset L^2(\Td;\R^{d_{\inn}^f})$. Let $\Phi^\dagger: K_v \times K_f \mapsto \R^{d_{\out}^v}$ be continuous. Then for any $\e > 0$, there exists an $\FNM$ $\Phi: K_v \times K_f \mapsto \R^{d_{\out}^{v}}$ of the form 
    \begin{equation}\label{eqn:FNM-form}
        \Phi = Q_v \circ \sG \circ \sL_T \circ \dots \circ \sL_1 \circ S_f \circ (\sD \circ S_v, I_{d_{\inn}^f}),
    \end{equation}
    for some $T \in \N_{>0}$ where $S_v$ acts on the input from $K_v$ and $I_{d_{\inn}^f}$ acts on the input from $K_f$, such that 
    \begin{equation}\label{eqn:FNM_form} \sup_{u \in K_v \times K_f} \|\Phi^\dagger(u) -\Phi(u)\|\leq \e.\end{equation}
    The layers in \cref{eqn:FNM-form} take the form of their homonymous counterparts in \cref{def:FNM}.
\end{proposition}

This proposition closely follows those of Theorems 3.2 and 3.3 in~\cite{huang2024operator}, but we state a proof here for completeness. We adapt the proofs of these theorems to our setting because our FNM architecture is novel in the sense that it accepts vector and function inputs jointly as opposed to purely vector or purely function inputs discussed in~\cite{huang2024operator}. The workhorse of this theoretical result is the Dugundji extension theorem that allows us to extend an operator acting on a compact subset of $L^2$ to all of $L^2$.

\begin{proof}
   Let $\one(x) = 1$ be the constant function and define the vector to function map $\L: \R^{d_{\inn}^{v}} \to L^2(\Td;\R^{d_{\inn}^{v}})$ given by $\L(z) \mapsto z\one$. Note that clearly $\|\L z\|_{L^2(\Td;\R^{d_{\inn}^{v}})} = \|z\|$, so $\L$ is continuous. Let $K_{vf}: = \{\L z: z \in K_v\}$ and note that this set is compact since $K_v$ is compact and $\L$ is continuous. Define $K = K_{vf} \times K_f \subset L^2(\Td; \R^{d_{\inn}^{v} + d_{\inn}^f})$ by $K_{vf} \times K_f$ which is also a compact set. Define $\Phi_{fv}^\dagger: K \mapsto \R^{d_{\out}^{v}}$ by $\L z \times f \mapsto \Phi^\dagger(z,f)$. 

    Lastly, define $\Phi_{ff}^\dagger: K \mapsto L^2(\Td; \R^{d_{\out}^{v}})$ by the map $z \mapsto \Phi_{fv}^\dagger(z)\one$. We first show that $\Phi_{ff}^\dagger$ is continuous. 
    \begin{align*}
        \|\Phi^\dagger_{ff}((Lz, f))\|^2_{\cY} & = \int_{\Td} \|\Phi^\dagger_{ff}((Lz, f))\|_{\R^{d_{\out}^{v}}}^2 \dx \\
        & = \int_{\Td}\|\Phi^\dagger(z,f)\one\|_{\R^{d_{\out}^{v}}}^2 \dx \\
        & = \|\Phi^\dagger(z,f)\|_{\R^{d_{\out}^{v}}}^2 \\
        & = \left\|\Phi^\dagger\left(\int_{\Td}Lz \; \dx, f\right)\right\|^2_{\R^{d_{\out}^{v}}}.
    \end{align*}
    Since the averaging operator $\int_{\Td} \cdot \dx$ over the torus is continuous and $\Phi^{\dagger}$ is also continuous, $\Phi^\dagger_{ff}$ is continuous from $K$ to $L^2(\Td; \R^{d_{\out}^{v}})$. 
    By the Dugundji extension theorem, there exists a continuous operator $\tilde{\Phi}^\dagger: L^2(\Td; \R^{d_{\inn}^{v} + d_{\inn}^f}) \to L^2(\Td; \R^{d_{\out}^{v}})$ such that $\tilde{\Phi}^\dagger(u) = \Phi_{ff}^\dagger(u)$ for every $u \in K$. By Theorem 9 of \cite{kovachki2021universal}, for any $\e$, there exists an FNO $\widehat{\Phi}$ of the form 
    \begin{equation}\label{eqn:FNOform}
        \widehat{\Phi} = \widehat{Q}_f \circ \widehat{\sL}_T \circ \dots \circ \widehat{\sL}_1 \circ \widehat{S}_f
    \end{equation}
    such that 
    \begin{equation}\label{eqn:FNOapprox}
    \sup_{a \in K} \|\tilde{\Phi}^\dagger(a)- \widehat{\Phi}(a)\|_{L^2(\Td;\R^{d_{\out}^{v}})} = \sup_{a \in K} \|\Phi_{ff}^\dagger(a)- \widehat{\Phi}(a)\|_{L^2(\Td;\R^{d_{\out}^{v}})} < \e.
    \end{equation}
    Specifically, in \cref{eqn:FNOform} we have that $\widehat{S}_f: \R^{d_{\inn}^{v}+d_{\inn}^f} \to \R^{d_0}$, and $\widehat{Q}_f: \R^{d_T} \to \R^{d_{\out}^{v}}$. 

    Next we define the following FNM architecture
    \begin{equation}
        \Phi = Q_v \circ \sG \circ \sL_T \circ \dots \sL_1 \circ S_f \circ (\sD\circ S_v, I_{d_{\inn}^f})
    \end{equation}
    where we will define its layers accordingly to be equivalent to the FNO architecture in~\cref{eqn:FNOform}. The layers are given by 
    \begin{align*}
        S_f & = \widehat{S}_f\\
        S_v & = I_{d_{\inn}^{v}}\\
        z\mapsto \sD z&= \one z \\
        u\mapsto \sL_1 u(x) & = \sigma\left(W_1u(x) + \sum_{k \in \Z^d}\left(\sum_{j=1}^{d_0}\left(P_1^{(k)}\right)_j \langle \psi_k, u_j \rangle_{L^2(\Td;\C)}\right)\psi_k(x) + b_1\right)\\
        & \text{where } W_1 = \widehat{W}_1, \; P^{(k)}_1 = \widehat{P}^{(k)}_1 \text{ so in other words }\\
        \sL_t & = \widehat{\sL_t}, \ t = \{1, \dots, T\}\\
        z\mapsto \sG z& = \int_{\Td}\kappa_f(x) z(x) \dx \text{ where } \kappa_f = \widehat{Q}_f\one \\
        Q_v & = I_{d_{\out}^{v}}.
    \end{align*}
In the preceding display, $\widehat{W}_1$ and $\widehat{P}_1$ are the associated coefficients in $\widehat{\sL}_1$ of the FNO in \cref{eqn:FNOform}. One can check by this construction that $\int_{\Td}\widehat{\Phi}(Lv,f) \dx = \Phi(v,f)$. Finally, this allows us to show that
\begin{align*}
    \sup_{v,f \in K_v \times K_f}&\|\Phi^\dagger(v,f) - \Phi(v,f)\|_{\R^{d_{\out}^{v}}} \\ &\leq \sup_{v,f \in K_v \times K_f} \left\|\Phi^\dagger(v,f) - \int_{\Td} \Phi^\dagger_{ff}(Lv,f)\dx\right\|_{\R^{d_{\out}^{v}}}\\
    &\qquad+ \left\| \int_{\Td}\Big(\Phi^\dagger_{ff}(Lv,f) - \widehat{\Phi}(Lv,f)\Big)\dx\right\|_{\R^{d_{\out}^{v}}}\\
    &\qquad+ \left\| \int_{\Td}\widehat{\Phi}(Lv,f) \dx - \Phi(v,f) \right\|_{\R^{d_{\out}^{v}}}\\
    & \leq \e.
\end{align*}
The transition from the second to third line holds because $\int_{\Td} \Phi^\dagger_{ff}(Lv,f)\dx$ \allowbreak$=$\allowbreak $\Phi^\dagger(v,f)$, by the approximation result in~\cref{eqn:FNOapprox}, and by the fact that $\int_{\Td}\widehat{\Phi}(Lv,f) \dx$\allowbreak$=$\allowbreak$\Phi(v,f)$. Since $\e$ was arbitrary, the lemma is proven. 
\end{proof}

\begin{proposition}\label{prop:UA-hom}
    Under \cref{assump:E_nu_eps}, for any $\epsbar_{\max}, \dot{\epsbar}_{\max}, \xi_{\max} > 0$ and\linebreak$\e_F, \e_G > 0$, there exist FNMs $F_{\FNM}$ and $G_{\FNM}$, in 
    \cref{eqn:PsiRNO}, that approximate $F_{\pc}$ and $G_{\pc}$ of \cref{eqn:FGpc} such that 
    \begin{gather}
        \sup_{\substack{|z_1| \leq \epsbar_{\max}, |z_2| \leq \dot{\epsbar}_{\max}, \|z_3\|\leq \xi_{\max},\\ E \in \cM_{E_{\min}, E_{\max}}^B, \nu \in \cM_{\nu_{\min}, \nu_{\max}}^B}} |F_{\FNM}(z_1, z_2, z_3; E, \nu) - F_{\pc}(z_1, z_2, z_3; E, \nu)| < \e_F\\
        \sup_{\substack{|z_1| \leq \epsbar_{\max}, \|z_3\|\leq \xi_{\max},\\ E \in \cM_{E_{\min}, E_{\max}}^B, \nu \in \cM_{\nu_{\min}, \nu_{\max}}^B}} \|G_{\FNM}(z_1, z_3; E, \nu) - G_{\pc}(z_1, z_3; E, \nu)\| < \e_G.
    \end{gather}
\end{proposition}
\begin{proof}
The proof is a simple application of \cref{prop:UA-FNM}. The function inputs $E$ and $\nu$ are on the same domain $\Td$ and may have their outputs concatenated to form a single function input set $K_f$ consistent with the statement of \cref{prop:UA-FNM}. Since $\cM_{E_{\min}, E_{\max}}^B$ and $\cM_{\nu_{\min}, \nu_{\max}}^B$ are compact in $L^2$ due to the embedding $\BV(\T) \cap L^{\infty}(\T) \hookrightarrow L^2(\T)$, the set of input functions is a compact set. For a proof of this embedding result, see \cite[Lemma C.1]{bhattacharya2024learning}. Similarly, since all the finite inputs are bounded, their product set is also a compact set $K_v$ consistent with \cref{prop:UA-FNM}. Applying the lemma gives the result for both $F_{\FNM}$ and $G_{\FNM}$ to arbitrary accuracy $\e_F$ and $\e_G$. 
\end{proof}

The following assumptions are necessary to derive the Lipschitz constant of the FNM in \cref{lem:FNO_lip}. 
\begin{assumptions} \label{ass:FNM-lip}
    We assume 
    \begin{enumerate}
        \item The activation $\sigma$ is $B-$Lipschitz.
        \item $\|P_t\|_{\infty}$ and $ \|W_t\|_{\infty}$ are bounded.
        \item $\Big(\sum_{k \in \Zd}\Big\|P_v^{(k)}\Big\|_F^2\Big)^\frac{1}{2}$ and $\Big(\sum_{k \in \Zd}\Big\|P_f^{(k)}\Big\|_F^2\Big)^\frac{1}{2}$ are bounded.
        \item $S_f, S_v, Q_f$ and $Q_v$ are feedforward neural nets with activation $\sigma$, bounded network weights, fixed maximum width, and fixed number of layers.
    \end{enumerate}
\end{assumptions}
\begin{lemma}\label{lem:FNO_lip}
   Under Assumptions \cref{ass:FNM-lip} and using notation from Definition \cref{def:FNM}, a Fourier Neural Mapping $\Phi$ of the form \[ Q_v \circ \sG \circ \sL_T \circ \dots \circ \sL_1 \circ S_f \circ (\sD \circ S_v, I_{d_{\inn}^f}) \] is Lipschitz in the vector inputs: there exists some $C>0$ such that for vector inputs $v_1$ and $v_2$ and function input $f$, 
   \begin{equation*}
       \|\Phi(v_1, f) - \Phi(v_2, f)\|_2 \leq C \|v_1 - v_2\|_2. 
   \end{equation*}
\end{lemma}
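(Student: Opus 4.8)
The plan is to prove the Lipschitz bound by tracking how the Lipschitz constant propagates through each layer in the composition $Q_v \circ \sG \circ \sL_T \circ \dots \circ \sL_1 \circ S_f \circ (\sD \circ S_v, I_{d_{\inn}^f})$, using the fact that the function input $f$ is held fixed so only the vector-input branch $\sD \circ S_v$ contributes a nonconstant perturbation. Concretely, fix $f$ and inputs $v_1, v_2$; set $a_i = (\sD \circ S_v, I_{d_{\inn}^f})(v_i, f) \in L^2(\Td; \R^{d_{\inn}^v + d_{\inn}^f})$. The two function branches agree on their $f$-component, so $\|a_1 - a_2\|_{L^2} = \|\sD(S_v v_1) - \sD(S_v v_2)\|_{L^2(\Td)}$. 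I would then bound this by the composition of the Lipschitz constants of $S_v$ (a feedforward net with bounded weights and fixed architecture, hence globally Lipschitz, by Assumption~\ref{ass:FNM-lip}(4)) and of $\sD$ (which, reading off Definition~\ref{def:FNM}, sends $z \mapsto \{\sum_{k} (P_v^{(k)} z)_j \psi_k\}_j$, so by Parseval $\|\sD z\|_{L^2} \le (\sum_k \|P_v^{(k)}\|_F^2)^{1/2} \|z\|_2$, which is finite by Assumption~\ref{ass:FNM-lip}(3)).

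Next I would show each subsequent layer is Lipschitz on the relevant input space. The function lifting layer $S_f$ is applied pointwise by a feedforward net with bounded weights, so it is Lipschitz $L^2 \to L^2$ with constant controlled by the product of operator norms of its weight matrices times the Lipschitz constant $B$ of $\sigma$ (Assumption~\ref{ass:FNM-lip}(1),(4)). For each Fourier layer $\sL_t(u)(x) = \sigma(W_t u(x) + (\cK_t u)(x) + b_t)$, the map $u \mapsto W_t u(\cdot)$ has $L^2$-operator norm $\le \|W_t\|_2$, and the convolution $\cK_t$ has $L^2$-operator norm bounded in terms of $\sup_k \|P_t^{(k)}\|$ (again by Parseval, since $\cK_t$ truncates to finitely many modes and acts by multiplication by $P_t^{(k)}$ on each); these are bounded by Assumption~\ref{ass:FNM-lip}(2). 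Composing with the $B$-Lipschitz $\sigma$ gives a finite Lipschitz constant $L_t$ for $\sL_t: L^2(\Td;\R^{d_{t-1}}) \to L^2(\Td;\R^{d_t})$, uniformly in the input. The function-to-vector layer $\sG h = \{\sum_k \sum_j (P_f^{(k)})_{\ell j} \langle \psi_k, h_j\rangle\}_\ell$ is linear and bounded $L^2 \to \R^{d_{\proj}^{fv}}$ with norm $\le (\sum_k \|P_f^{(k)}\|_F^2)^{1/2}$, finite by Assumption~\ref{ass:FNM-lip}(3) (here I use Cauchy--Schwarz in $k$ together with Parseval). Finally $Q_v$ is a feedforward net with bounded weights, hence Lipschitz.

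Chaining all of these, $\|\Psi(v_1,f) - \Psi(v_2,f)\|_2 \le C \|v_1 - v_2\|_2$ with $C = \mathrm{Lip}(Q_v)\cdot\mathrm{Lip}(\sG)\cdot\big(\prod_{t=1}^T L_t\big)\cdot\mathrm{Lip}(S_f)\cdot\mathrm{Lip}(\sD)\cdot\mathrm{Lip}(S_v)$, and crucially $C$ does not depend on $f$ because the function branch is identical for both inputs and the Lipschitz constants of the intermediate layers are global (independent of the base point). This is exactly the constant invoked as $L_G$ in the proof of the main universal approximation theorem.

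The only mild obstacle is being careful that the feedforward networks $S_v, S_f, Q_v$ are \emph{globally} Lipschitz: this holds because a finite composition of affine maps with bounded weight matrices and a globally $B$-Lipschitz activation $\sigma$ is globally Lipschitz with an explicitly computable constant, and Assumption~\ref{ass:FNM-lip}(4) fixes the width and depth so this constant is uniform; I would spell this standard estimate out in one line. A secondary point is that the bounds on $\cK_t$ and $\sG$ rely on the operators truncating to a finite set of Fourier modes (so the Parseval sums are finite) together with the stated boundedness assumptions; I would note this explicitly rather than derive sharp constants, since only finiteness of $C$ is needed.
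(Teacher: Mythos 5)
Your proposal is correct and takes essentially the same route as the paper: prove each layer ($\sD$, $S_v$, $S_f$, the Fourier layers $\sL_t$, $\sG$, $Q_v$) is Lipschitz with explicit constants via Parseval, Cauchy--Schwarz, and the boundedness hypotheses in Assumptions~\ref{ass:FNM-lip}, then chain them; the observation that the function branch is identical for both inputs and so only $\sD \circ S_v$ carries the perturbation is implicit but correct. One small slip: the finiteness of the $\cK_t$ and $\sG$ operator norms does not rely on truncating to finitely many Fourier modes --- for $\cK_t$ the uniform bound $\|P_t\|_\infty < \infty$ alone suffices (the Parseval sum $\sum_k |\hat u_j(k)|^2$ is automatically finite for $u \in L^2$), and for $\sG$ the assumed square-summability $\sum_k \|P_f^{(k)}\|_F^2 < \infty$ is exactly what is needed; neither requires a finite mode cutoff.
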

\begin{proof}~
\begin{enumerate}[label=(\roman*)]
    \item Claim: The Fourier layers $\sL_t: L^2(\Td; \R^{d_{t-1}}) \to L^2(\Td; \R^{d_t})$ for $t \in [T]$ are Lipschitz

    Take any $u_1, u_2 \in L^2(\Td; \R^{d_{t-1}})$ and write
    \begin{align*}
        &\|\sL_t(u_1) - \sL_t(u_2)\|_{L^2(\Td; \R^{d_t})}\\
        =& \Big\|\sigma\Big(W_t u_1 + (\cK_t u_1) + b_t\Big) - \sigma\Big(W_t u_2 + (\cK_t u_2) + b_t\Big)\Big\|_{L^2}\\
        \leq& B\Big\|W_t (u_1 - u_2) + (\cK_t (u_1 - u_2))\Big\|_{L^2}\\
        \leq& B\|W_t\|_\infty\sqrt{d_{t-1}d_t}\|u_1 - u_2\|_{L^2} + \|\cK_t (u_1 - u_2)\|_{L^2}.
    \end{align*}
    Now to bound the second term above we let $u = u_1 - u_2$ and write
    \begin{align*}
        \|\cK_t u\|_{L^2(\Td; \R^{d_t})}^2 &= \sum_{l=1}^{d_t}\Big\|\sum_{k \in \Zd}\Big(\sum_{j=1}^{d_{t-1}} (P^{(k)}_t)_{\ell j}\ip{\psi_k}{u_j}_{L^2(\Td;\C)} \Big)\psi_k(x)\Big\|_{L^2(\Td)}^2\\
        &= \sum_{l=1}^{d_t}\sum_{k \in \Zd}\Big(\sum_{j=1}^{d_{t-1}} (P^{(k)}_t)_{\ell j}\ip{\psi_k}{u_j}_{L^2(\Td;\C)} \Big)^2\\
        &\leq \|P_t\|_\infty^2d_{t-1}d_t\sum_{k \in \Zd}\sum_{j=1}^{d_{t-1}} \ip{\psi_k}{u_j}_{L^2(\Td;\C)}^2\\
        &= \|P_t\|_\infty^2d_{t-1}d_t\|u\|_{L^2(\Td;\R_{d_{t-1}})}^2.
    \end{align*}
    Combining everything together we get
    \begin{equation}
    \begin{aligned}
        \|\sL_t(u_1) - \sL_t(u_2)\|_{L^2(\Td; \R^{d_t})} &\leq \big(B\|W_t\|_\infty + \|P_t\|_\infty\big)\sqrt{d_{t-1}d_t} \\
        &\qquad\times\|u_1 - u_2\|_{L^2(\Td;\R_{d_{t-1}})}
    \end{aligned}
    \end{equation}
    
    \item Claim: The lifting and projection layers $S_v$, $S_f$, and $Q_v$ are Lipschitz. 

    Each of these layers is a feed-forward neural network with Lipschitz activation functions and hence it is Lipschitz by composition.
    
    \item The vector to function layer $\sD: \R^{d_{\lift}^v} \to L^2(\Td; \R^{d_{\lift}^{vf}})$ is Lipschitz.

    For any $z \in \R^{d_{\lift}^v}$ we write out
    \begin{align*}
        \|\sD z\|_{L^2(\Td; \R^{d_{\lift}^{vf}})}^2 &= \sum_{j=1}^{d_{\lift}^{vf}}\Big\|\sum_{k \in \Zd} \left(P_v^{(k)}z\right)_j \psi_k\Big\|_{L^2(\Td)}^2 = \sum_{j=1}^{d_{\lift}^{vf}}\sum_{k \in \Zd}\left(P_v^{(k)}z\right)_j^2\\
        &= \sum_{k \in \Zd}\Big\|P_v^{(k)}z\Big\|_2^2 \leq \Big(\sum_{k \in \Zd}\Big\|P_v^{(k)}\Big\|_F^2\Big)\|z\|_2^2
    \end{align*}
    where the second line follows by an application of Cauchy-Schwarz. So for any $z_1, z_2 \in \R^{d_{\lift}^v}$ we have that
    \begin{equation}
        \|\sD(z_1 - z_2)\|_{L^2(\Td; \R^{d_{\lift}^{vf}})} \leq \Big(\sum_{k \in \Zd}\Big\|P_v^{(k)}\Big\|_F^2\Big)^\frac{1}{2}\|z_1 - z_2\|_2.
    \end{equation}

    \item The function to vector layer $\sG: L^2(\Td; \R^{d_T}) \to \R^{d_{\proj}^{fv}}$ is Lipschitz.

    For any $h \in L^2(\Td; \R^{d_T})$ we can write
    \begin{align*}
        \|\sG h\|_{L^2} \leq \|\sG h\|_{L^1} &= \sum_{l=1}^{d_{\proj}^{fv}}\Big|\sum_{k \in \Zd}\sum_{j=1}^{d_T} (P^{(k)}_f)_{\ell j} \ip{\psi_k}{h_j}_{L^2(\Td;\C)}\Big|\\
        &\leq \sum_{l=1}^{d_{\proj}^{fv}}\sum_{j=1}^{d_T} \Big|\sum_{k \in \Zd}(P^{(k)}_f)_{\ell j} \ip{\psi_k}{h_j}_{L^2(\Td;\C)}\Big|\\
        &\leq \sum_{l=1}^{d_{\proj}^{fv}}\sum_{j=1}^{d_T} \Big(\sum_{k \in \Zd}(P^{(k)}_f)_{\ell j}^2\Big)^\frac{1}{2} \Big(\sum_{k \in \Zd}\ip{\psi_k}{h_j}_{L^2(\Td;\C)}\Big)^\frac{1}{2}\\
        &= \sum_{l=1}^{d_{\proj}^{fv}}\sum_{j=1}^{d_T} \Big(\sum_{k \in \Zd}(P^{(k)}_f)_{\ell j}^2\Big)^\frac{1}{2}\|h_j\|_{L^2(\Td;\C)}\\
        &\leq \sum_{l=1}^{d_{\proj}^{fv}}\Big(\sum_{j=1}^{d_T} \sum_{k \in \Zd}(P^{(k)}_f)_{\ell j}^2\Big)^\frac{1}{2}\|h\|_{L^2(\Td;\R^{d_T})}\\
        &\leq \sqrt{d_{\proj}^{fv}}\Big(\sum_{k \in \Zd}\|P^{(k)}_f\|_F^2\Big)^\frac{1}{2}\|h\|_{L^2(\Td;\R^{d_T})}
    \end{align*}
    where the third, fifth, and sixth lines above follow from an application of Cauchy-Schwarz.
\end{enumerate}
\end{proof}

\section{Loss Function Penalty Term for Viscoelastic FNM--RNO}\label{appsec:penalty}

To demonstrate the necessity of the penalty term for the constraint\linebreak$\|G_{\text{FNM}}(0, 0; E,\nu)\|$\allowbreak$=$\allowbreak$0$, we train FNM--RNO with varying numbers of internal variables with and without the penalty term in the loss function \cref{eq:loss_function}. The FNM--RNO trained without penalty achieves a slightly smaller relative $L^2$ testing error on both datasets, while the relative $L^{\infty}$ testing error is much larger. In \cref{fig:l_infity_error_distribution}, we visualize the distributions of their relative $L^{\infty}$ testing error on the PC and HMC dataset. For the PC dataset, the relative $L^{\infty}$ prediction error of the FNM--RNO trained without penalty is, on average, much larger than that of the linear stress response without memory effects. For the HMC dataset, this discrepancy is less pronounced. On the other hand, the FNM--RNO trained with penalty typically achieves smaller relative $L^{\infty}$ testing errors on average compared to linear stress response without memory effects, except when the number of internal variables is large. 

In \cref{fig:sample_wo_penalty}, we visualize the FNM--RNO predictions at the sample in the PC testing datasets with large relative $L^{\infty}$ error when the FNO--RNO uses 5 internal variables. When the FNM--RNO is trained without penalty, its averaged stress prediction at the initial time has a large error, while the prediction by the FNM--RNO trained with penalty has no visible error. Rates of change of the internal variables are large at the initial time when trained without penalty, and the constraint $\|G_{\text{FNM}}(0, 0; E, \nu)\|=0$ is violated. This behavior is consistent when testing on the HMC dataset and when different FNO--RNO architectures are used  (e.g., different numbers of internal variables, channels, and Fourier modes).

\begin{figure}[htb]
    \centering
    \scalebox{0.9}{
    \begin{tabular}{c}
        \hspace{0.08\linewidth}\bf Relative $L^{\infty}$ testing error on the PC dataset \\ 
        \includegraphics[width=0.75\linewidth]{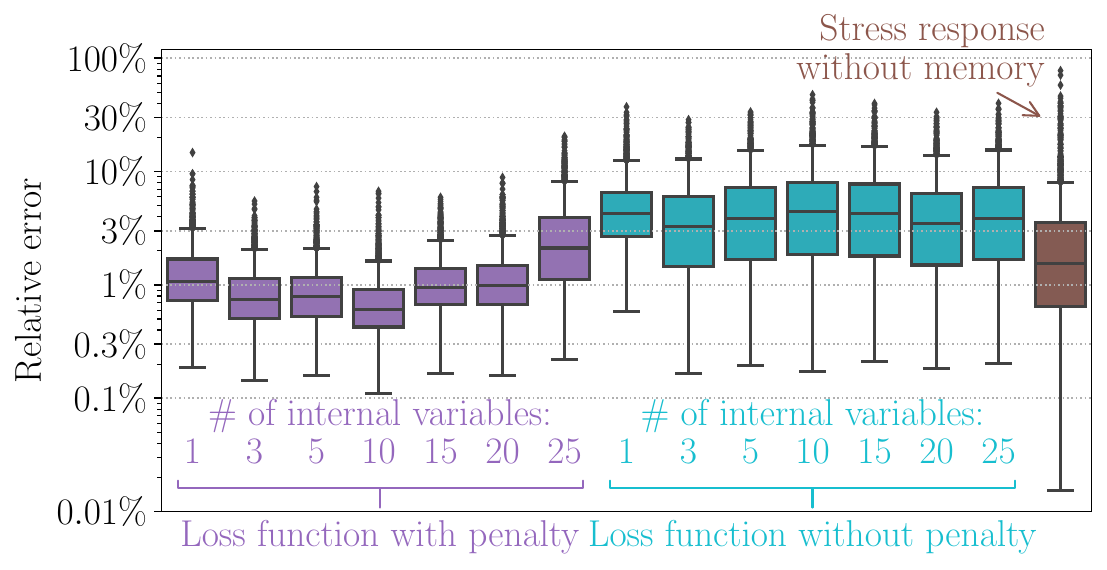} \vspace{0.03\linewidth}\\
        \hspace{0.08\linewidth} \bf Relative $L^{\infty}$ testing error on the HMC dataset \\
        \includegraphics[width=0.75\linewidth]{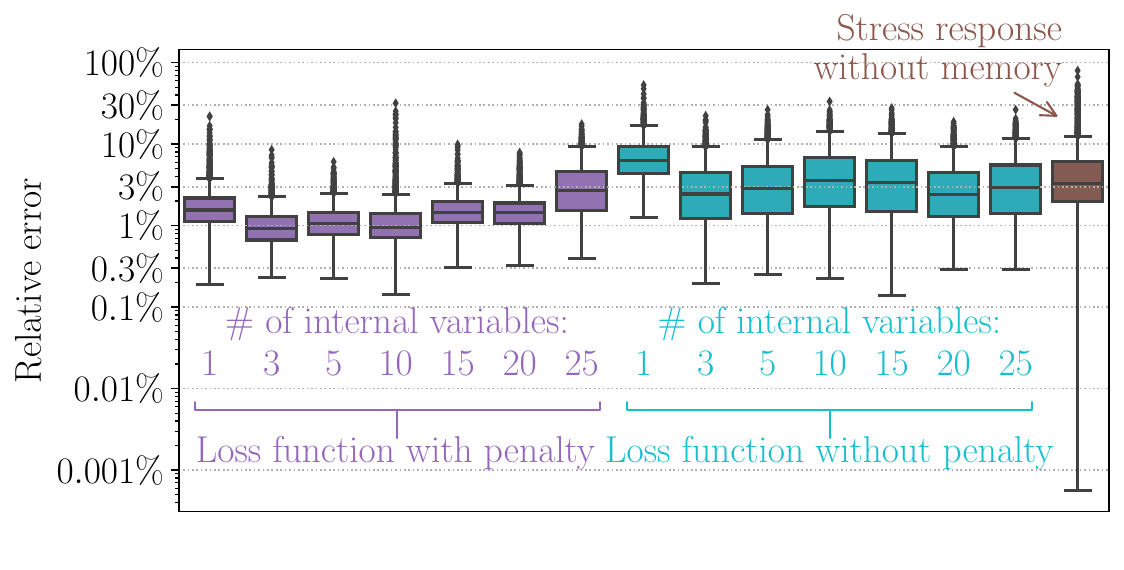}
    \end{tabular}
    }
    \caption{The distributions of the relative $L^{\infty}$ error on 2,500 testing samples from the PC dataset (\textit{top}) and the HMC dataset (\textit{bottom}). We visualize the errors in the FNM--RNO predictions where FNM--RNOs (i) are trained with or without the penalty term in \cref{eq:loss_function}, and (ii) have a varying number of internal variables. We also visualize the distribution of error given by the linear stress response without memory effects, where the response function is obtained using \cref{eqn:kernelform} with $K(t)=0$ for all $t\in[0,1]$.}
    \label{fig:l_infity_error_distribution}
\end{figure}
\newpage

\begin{figure}[htb]
    \centering
    \scalebox{0.76}{
    \addtolength{\tabcolsep}{-6pt}
    \begin{tabular}{H C I I}
        \hspace{0.2\linewidth}\makecell{\bf Testing sample\\\bf inputs}& & \hspace{0.17\linewidth}\makecell{\bf FNM--RNO trained \\\bf without penalty} & \hspace{0.17\linewidth} \makecell{\bf FNM--RNO trained\\\bf with penalty}  \\
         \multirow{3}{*}{\includegraphics[width=\linewidth]{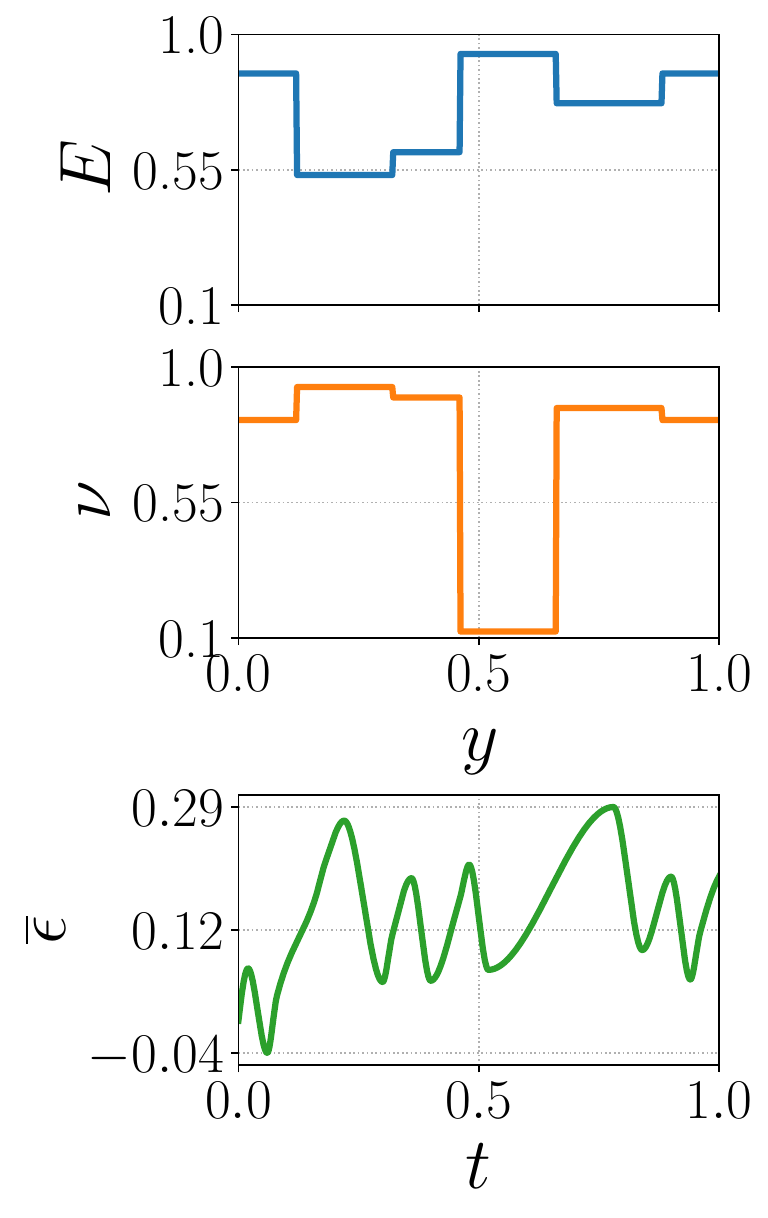}}  & & \multirow{4}{*}{\includegraphics[width=\linewidth]{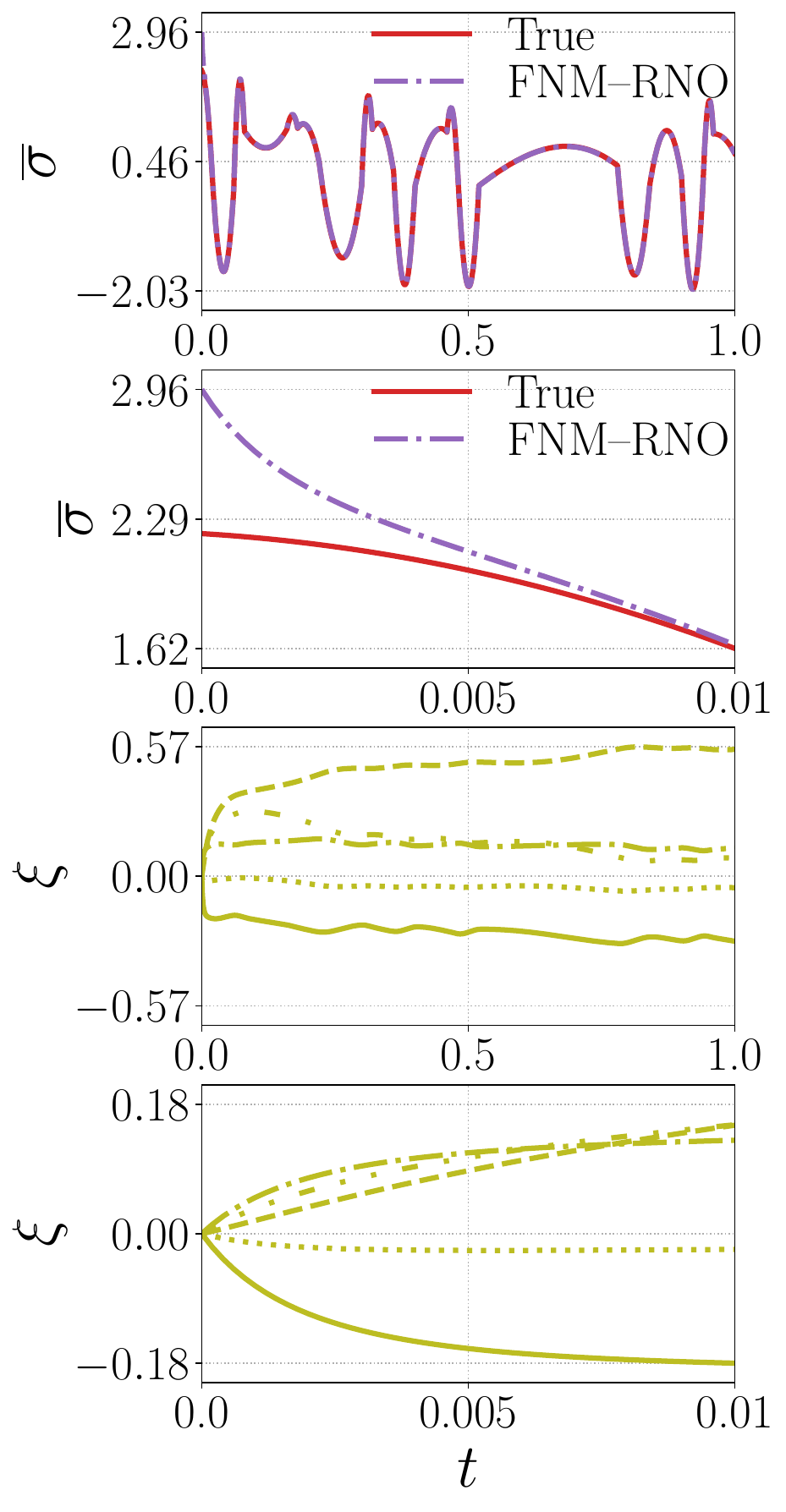}} & \multirow{4}{*}{\includegraphics[width=\linewidth]{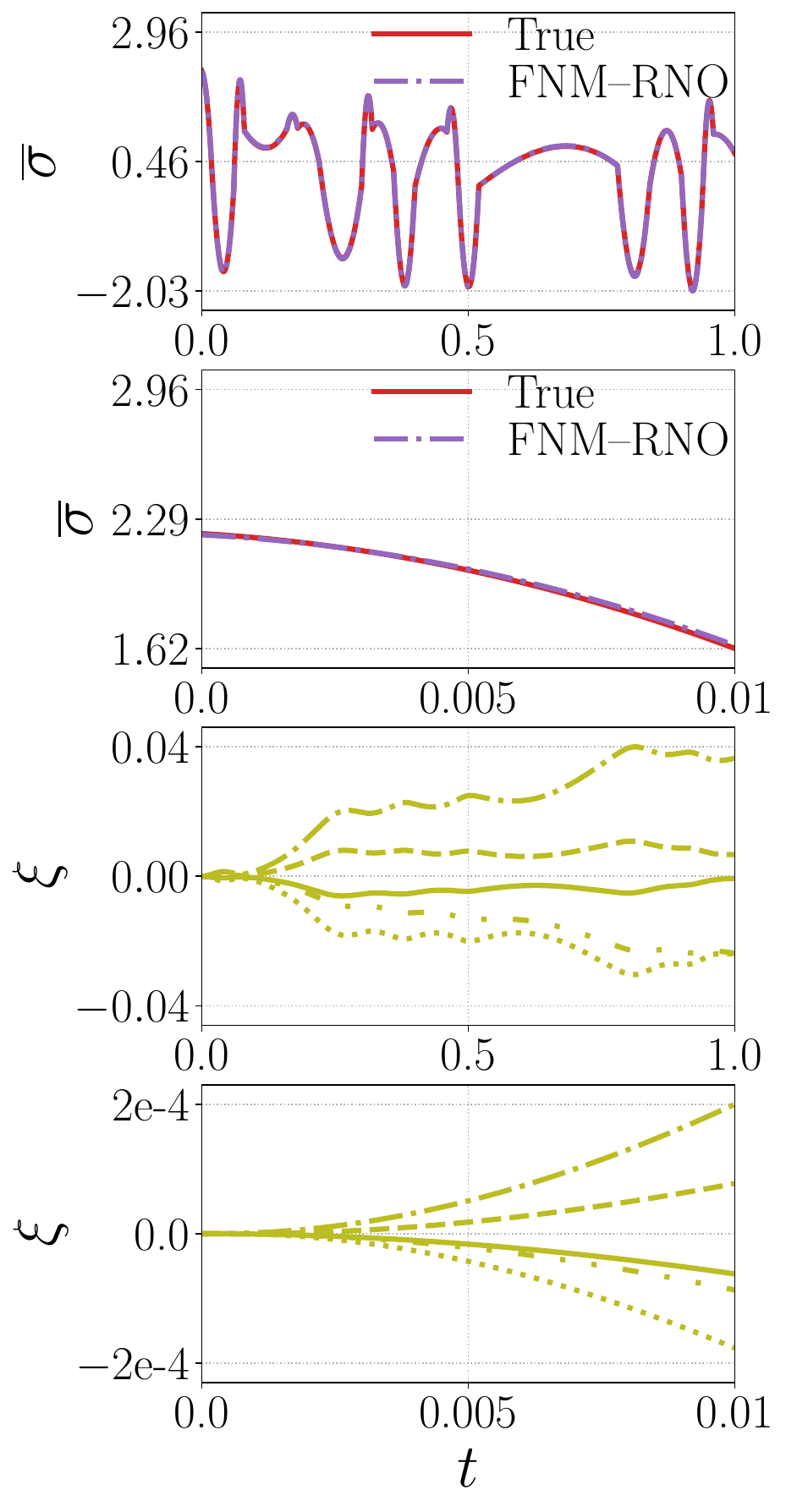}} \\[-0.02\linewidth]
         & \rotatebox{90}{\makecell{\bf Full\\\bf  trajectories}} & &\\[0.1\linewidth]
         & \rotatebox{90}{\makecell{\bf Enlarged\\\bf  view}} & &\\[0.08\linewidth]
         & \rotatebox{90}{\makecell{\bf Full\\\bf  trajectories}} & &\\[0.1\linewidth]
            & \rotatebox{90}{\makecell{\bf Enlarged\\\bf view}} & &\\[0.15\linewidth]
    \end{tabular}
    \addtolength{\tabcolsep}{-6pt}
    }
    \caption{Visualization of FNM--RNO predictions at the sample from the PC testing dataset with the largest relative $L^{\infty}$ testing error (5 internal variables and without penalty in \cref{fig:l_infity_error_distribution}). We compare the predictions by FNM--RNOs trained using the loss function in \cref{eq:loss_function} without the penalty term and with the penalty term. For the averaged stress and internal variable predictions, we show their full trajectories in $t\in[0,1]$ and enlarged views in $t\in[0, 0.01]$.}
    \label{fig:sample_wo_penalty}
\end{figure}

\clearpage

\bibliographystyle{siamplain}
\bibliography{references}

\end{document}